\providecommand{\U}[1]{\protect\rule{.1in}{.1in}}
\newtheorem{theorem}{Theorem}[section]
\newtheorem{corollary}[theorem]{Corollary}
\newtheorem{definition}[theorem]{Definition}
\newtheorem{lemma}[theorem]{Lemma}
\newtheorem{proposition}[theorem]{Proposition}
\newtheorem{remark}[theorem]{Remark}
\newenvironment{proof}[1][Proof]{\textbf{#1.} }{\hfill\rule{0.5em}{0.5em}}
{\catcode`\@=11\global\let\AddToReset=\@addtoreset
\AddToReset{equation}{section}

\AddToReset{theorem}{section}

\begin{document}

\title{Stability properties for quasilinear parabolic equations with measure data }
\author{Marie-Fran\c{c}oise BIDAUT-VERON\thanks{Laboratoire de Math\'{e}matiques et
Physique Th\'{e}orique, CNRS UMR 7350, Facult\'{e} des Sciences, 37200 Tours
France. E-mail: veronmf@univ-tours.fr}
\and Quoc-Hung NGUYEN\thanks{Laboratoire de Math\'{e}matiques et Physique
Th\'{e}orique, CNRS UMR 7350, Facult\'{e} des Sciences, 37200 Tours France.
E-mail: Hung.Nguyen-Quoc@lmpt.univ-tours.fr}}
\date{.}
\maketitle

\begin{abstract}
Let $\Omega$ be a bounded domain of $\mathbb{R}^{N}$, and $Q=\Omega
\times(0,T).$ We study problems of the model type
\[
\left\{
\begin{array}
[c]{l}%
{u_{t}}-{\Delta_{p}}u=\mu\qquad\text{in }Q,\\
{u}=0\qquad\text{on }\partial\Omega\times(0,T),\\
u(0)=u_{0}\qquad\text{in }\Omega,
\end{array}
\right.
\]
where $p>1$, $\mu\in\mathcal{M}_{b}(Q)$ and $u_{0}\in L^{1}(\Omega).$ Our main
result is a \textit{stability theorem }extending the results of Dal Maso,
Murat, Orsina, Prignet, for the elliptic case, valid for quasilinear operators
$u\longmapsto\mathcal{A}(u)=$div$(A(x,t,\nabla u))$\textit{. }

\end{abstract}

\medskip

\section{Introduction}

Let $\Omega$ be a bounded domain of $\mathbb{R}^{N}$, and $Q=\Omega
\times(0,T),$ $T>0.$ We denote by $\mathcal{M}_{b}(\Omega)$ and $\mathcal{M}%
_{b}(Q)$ the sets of bounded Radon measures on $\Omega$ and $Q$ respectively.
We are concerned with the problem
\begin{equation}
\left\{
\begin{array}
[c]{l}%
{u_{t}}-\text{div}(A(x,t,\nabla u))=\mu\qquad\text{in }Q,\\
{u}=0\qquad\qquad\qquad\qquad\text{on }\partial\Omega\times(0,T),\\
u(0)=u_{0}\qquad\qquad\qquad\text{in }\Omega,
\end{array}
\right.  \label{pmu}%
\end{equation}
where $\mu\in\mathcal{M}_{b}(Q)$, $u_{0}\in L^{1}(\Omega)$ and $A$ is a
Caratheodory function on $Q\times\mathbb{R}^{N}$, such that for $a.e.$
$(x,t)\in Q,$ and any $\xi,\zeta\in\mathbb{R}^{N},$
\begin{equation}
A(x,t,\xi).\xi\geq\Lambda_{1}\left\vert \xi\right\vert ^{p},\qquad\left\vert
A(x,t,\xi)\right\vert \leq a(x,t)+\Lambda_{2}\left\vert \xi\right\vert
^{p-1},\qquad\Lambda_{1},\Lambda_{2}>0,a\in L^{p^{\prime}}(Q),\label{condi1}%
\end{equation}%
\begin{equation}
(A(x,t,\xi)-A(x,t,\zeta)).\left(  \xi-\zeta\right)  >0\qquad\text{ if }\xi
\neq\zeta,\label{condi2}%
\end{equation}
for $p>1.$This includes the model problem where div$(A(x,t,\nabla
u))=\Delta_{p}u,$ where $\Delta_{p}$ is the $p$-Laplacian.\medskip

The corresponding elliptic problem:%
\[
-\Delta_{p}u=\mu\qquad\text{in }\Omega,\qquad u=0\qquad\text{on }%
\partial\Omega,
\]
with $\mu\in\mathcal{M}_{b}(\Omega),$ was studied in \cite{BoGa89,BoGa92} for
$p>2-1/N,$ leading to the existence of solutions in the sense of
distributions. For any $p>1,$ and $\mu\in L^{1}(\Omega),$ existence and
uniqueness are proved in \cite{BBGGPV} in the class of \textit{entropy
solutions}. For any $\mu\in\mathcal{M}_{b}(\Omega)$ the main work is done in
\cite[Theorems 3.1, 3.2]{DMOP}, where not only existence is proved in the
class of \textit{renormalized solutions}, but also a stability result,
fundamental for applications. $\medskip$

Concerning problem (\ref{pmu}), the first studies concern the case $\mu\in
L^{p^{\prime}}(Q)$ and $u_{0}\in L^{2}(\Omega)$, where existence and
uniqueness are obtained by variational methods, see \cite{Li}. In the general
case $\mu\in\mathcal{M}_{b}(Q)$ and $u_{0}\in\mathcal{M}_{b}(\Omega),$ the
pionner results come from \cite{BoGa89}, proving the existence of solutions in
the sense of distributions for%
\begin{equation}
p>p_{1}=2-\frac{1}{N+1},\label{rangep}%
\end{equation}
see also \cite{BDGO97}. The approximated solutions of (\ref{pmu}) lie in
Marcinkiewicz spaces $u\in L^{p_{c},\infty}\left(  Q\right)  $ and $\left\vert
\nabla u\right\vert \in L^{m_{c},\infty}\left(  Q\right)  ,$ where
\begin{equation}
p_{c}=p-1+\frac{p}{N},\qquad m_{c}=p-\frac{N}{N+1}.\label{crit}%
\end{equation}
This condition (\ref{rangep}) ensures that $u$ and $\left\vert \nabla
u\right\vert $ belong to $L^{1}\left(  Q\right)  $, since $m_{c}>1$ means
$p>p_{1}$ and $p_{c}>1$ means $p>2N/(N+1).$ Uniqueness follows in the case
$p=2$, $A(x,t,\nabla u)=\nabla u,$ by duality methods, see \cite{Pe07}.

For $\mu\in L^{1}(Q)$, uniqueness is obtained in new classes of
\textit{entropy solutions}, and \textit{renormalized solutions}, see
\cite{BlMu,Pr97,Xu}.

A larger set of \textit{ }measures is studied in \cite{DrPoPr}. They introduce
a notion of parabolic capacity initiated and inspired by \cite{Pi}, used after
in \cite{Pe08,PePoPor}, defined by
\[
c_{p}^{Q}(E)=\inf(\inf_{E\subset U\text{ open}\subset Q}\{||u||_{W}:u\in
W,u\geq\chi_{U}\quad a.e.\text{ in }Q\}),
\]
for any Borel set $E\subset Q,$ where setting $X={{L^{p}}(}${$(0,T)$}%
${;W_{0}^{1,p}(\Omega)\cap{L^{2}}(\Omega)),}$
\[
W=\left\{  {z:z\in}X{,\quad{z_{t}}\in X}^{\prime}\right\}  ,\text{ embedded
with the norm }||u||_{W}=||u||_{{X}}+||u_{t}||_{{X}^{\prime}}.
\]
Let $\mathcal{M}_{0}(Q)$ be the set of Radon measures $\mu$ on $Q$ that do not
charge the sets of zero $c_{p}^{Q}$-capacity:
\[
\forall E\text{ Borel set }\subset Q,\quad c_{p}^{Q}(E)=0\Longrightarrow
\left\vert \mu\right\vert (E)=0.
\]
Then existence and uniqueness of renormalized solutions of (\ref{pmu}) hold
for any measure $\mu\in\mathcal{M}_{b}(Q)\cap\mathcal{M}_{0}(Q),$ called
\textit{soft (or diffuse, or regular) measure}, and $u_{0}\in L^{1}(\Omega)$,
and $p>1$. The equivalence with the notion of entropy solutions is shown in
\cite{DrPr}. For such a soft measure, an extension to equations of type
$(b({u))_{t}}-\Delta_{p}u=\mu$ is given in \cite{BlPeRe}; another formulation
is used in \cite{PePoPor} for solving a perturbed problem from (\ref{pmu}) by
an absorption term.\medskip

Next consider an\textit{ arbitrary measure} $\mu\in\mathcal{M}_{b}(Q).$ Let
$\mathcal{M}_{s}(Q)$ be the set of all bounded Radon measures on $Q$ with
support on a set of zero $c_{p}^{Q}$-capacity, also called \textit{singular}.
Let $\mathcal{M}_{b}^{+}(Q),\mathcal{M}_{0}^{+}(Q),\mathcal{M}_{s}^{+}(Q)$ be
the positive cones of $\mathcal{M}_{b}(Q),\mathcal{M}_{0}(Q),\mathcal{M}%
_{s}(Q).$ From \cite{DrPoPr}, $\mu$ can be written (in a unique way) under the
form%
\begin{equation}
\mu=\mu_{0}+\mu_{s},\qquad\mu_{0}\in\mathcal{M}_{0}(Q),\quad\mu_{s}=\mu
_{s}^{+}-\mu_{s}^{-},\qquad\mu_{s}^{+},\mu_{s}^{-}\in\mathcal{M}_{s}%
^{+}(Q),\label{deo}%
\end{equation}
and $\mu_{0}\in$ $\mathcal{M}_{0}(Q)$ admits (at least) a decomposition under
the form%
\begin{equation}
\mu_{0}=f-\operatorname{div}g+h_{t},\qquad f\in L^{1}(Q),\quad g\in
(L^{p^{\prime}}(Q))^{N},\quad h\in{X},\label{dec}%
\end{equation}
and we write $\mu_{0}=(f,g,h).$ Conversely, any measure of this form,
\textit{such that} $h\in L^{\infty}(Q),$ lies in $\mathcal{M}_{0}(Q),$ see
\cite[Proposition 3.1]{PePoPor}. The solutions of (\ref{pmu}) are searched in
a renormalized sense linked to this decomposition, introduced in
\cite{DrPoPr,Pe08}. \ In the range (\ref{rangep}) the existence of a
renormalized solution relative to the decomposition (\ref{dec}) is proved in
\cite{Pe08}, using suitable approximations of $\mu_{0}$ and $\mu_{s}$.
Uniqueness is still open, as well as in the elliptic case. $\medskip$

In \textit{all the sequel} we suppose that $p$ satisfies (\ref{rangep}). Then
the embedding $W_{0}^{1,p}(\Omega)\subset L^{2}(\Omega)$ is valid, that means
\[
X={{L^{p}}((0,T);W_{0}^{1,p}(\Omega)),\qquad X}^{\prime}={{L^{p^{\prime}}%
}((0,T);W^{-1,p^{\prime}}(\Omega)).}%
\]

In Section \ref{defsol} we recall the definition of renormalized solutions,
given in \cite{Pe08}, that we call R-solutions of (\ref{pmu}), relative to the
decomposition (\ref{dec}) of $\mu_{0}$, and study some of their properties.
Our main result is a \textit{stability theorem} for problem (\ref{pmu}),
proved in Section \ref{cv}, extending to the parabolic case the stability
result of \cite[Theorem 3.4]{DMOP}. In order to state it, we recall that a
sequence of measures $\mu_{n}\in\mathcal{M}_{b}(Q)$ converges to a measure
$\mu\in\mathcal{M}_{b}(Q)$ in the \textit{narrow topology} of measures if%
\[
\lim_{n\rightarrow\infty}\int_{Q}\varphi d\mu_{n}=\int_{Q}\varphi d\mu
\qquad\forall\varphi\in C(Q)\cap L^{\infty}(Q).
\]

\begin{theorem}
\label{sta} Let $A:Q\times\mathbb{R}^{N}\rightarrow\mathbb{R}^{N}$ satisfy
(\ref{condi1}),(\ref{condi2}). Let $u_{0}\in L^{1}(\Omega)$, and
\[
\mu=f-\operatorname{div}g+h_{t}+\mu_{s}^{+}-\mu_{s}^{-}\in\mathcal{M}_{b}%
({Q}),
\]
with $f\in L^{1}(Q),g\in(L^{p^{\prime}}(Q))^{N},$ $h\in X$ and $\mu_{s}%
^{+},\mu_{s}^{-}\in\mathcal{M}_{s}^{+}(Q).$ Let $u_{0,n}\in L^{1}(\Omega),$
\[
\mu_{n}=f_{n}-\operatorname{div}g_{n}+(h_{n})_{t}+\rho_{n}-\eta_{n}%
\in\mathcal{M}_{b}({Q}),
\]
with \ $f_{n}\in L^{1}(Q),g_{n}\in(L^{p^{\prime}}(Q))^{N},h_{n}\in X,$ and
$\rho_{n},\eta_{n}\in\mathcal{M}_{b}^{+}({Q}),$ such that
\[
\rho_{n}=\rho_{n}^{1}-\operatorname{div}\rho_{n}^{2}+\rho_{n,s},\qquad\eta
_{n}=\eta_{n}^{1}-\mathrm{\operatorname{div}}\eta_{n}^{2}+\eta_{n,s},
\]
with $\rho_{n}^{1},\eta_{n}^{1}\in L^{1}(Q),\rho_{n}^{2},\eta_{n}^{2}%
\in(L^{p^{\prime}}(Q))^{N}$ and $\rho_{n,s},\eta_{n,s}\in\mathcal{M}_{s}%
^{+}(Q).$ Assume that
\[
\sup_{n}\left\vert {{\mu_{n}}}\right\vert ({Q})<\infty,
\]
and $\left\{  u_{0,n}\right\}  $ converges to $u_{0}$ strongly in
$L^{1}(\Omega),$ $\left\{  f_{n}\right\}  $ converges to $f$ weakly in
$L^{1}(Q),$ $\left\{  g_{n}\right\}  $ converges to $g$ strongly in
$(L^{p^{\prime}}(Q))^{N}$, $\left\{  h_{n}\right\}  $ converges to $h$
strongly in $X$, $\left\{  \rho_{n}\right\}  $ converges to $\mu_{s}^{+}$ and
$\left\{  \eta_{n}\right\}  $ converges to $\mu_{s}^{-}$ in the narrow
topology; and $\left\{  \rho_{n}^{1}\right\}  ,\left\{  \eta_{n}^{1}\right\}
$ are bounded in $L^{1}(Q)$, and $\left\{  \rho_{n}^{2}\right\}  ,\left\{
\eta_{n}^{2}\right\}  $ bounded in $(L^{p^{\prime}}(Q))^{N}$.\medskip

Let $\left\{  u_{n}\right\}  $ be a sequence of R-solutions of
\begin{equation}
\left\{
\begin{array}
[c]{l}%
{u_{n,t}}-\text{div}(A(x,t,\nabla u_{n}))=\mu_{n}\qquad\text{in }Q,\\
{u}_{n}=0\qquad\text{on }\partial\Omega\times(0,T),\\
u_{n}(0)=u_{0,n}\qquad\text{in }\Omega.
\end{array}
\right.  \label{pmun}%
\end{equation}
relative to the decomposition $(f_{n}+\rho_{n}^{1}-\eta_{n}^{1},g_{n}+\rho
_{n}^{2}-\eta_{n}^{2},h_{n})$ of $\mu_{n,0}.$ Let $U_{n}=u_{n}-h_{n}.\medskip$

Then up to a subsequence, $\left\{  u_{n}\right\}  $ converges $a.e.$ in $Q$
to a R-solution $u$ of (\ref{pmu}), and $\left\{  U_{n}\right\}  $ converges
$a.e.$ in $Q$ to $U=u-h.$ Moreover, $\left\{  \nabla u_{n}\right\}  ,\left\{
\nabla U_{n}\right\}  $ converge respectively to $\nabla u,\nabla U$ $a.e.$ in
$Q,$ and $\left\{  T_{k}(U_{n})\right\}  $ converge to $T_{k}(U)$ strongly in
$X$ for any $k>0$.\bigskip
\end{theorem}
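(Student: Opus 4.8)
\medskip

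The plan is to follow the strategy of \cite[Theorem 3.4]{DMOP}, adapting each step to the parabolic setting and to the presence of the time-derivative term $h_t$ and of the soft/singular splitting of $\mu_n$. First I would establish the \emph{a priori estimates}: using admissible test functions of the form $T_k(U_n)$ (with $U_n=u_n-h_n$) and the renormalization functions $S(U_n)$ in the R-solution formulation, together with the coercivity in (\ref{condi1}) and the uniform bound $\sup_n|\mu_n|(Q)<\infty$, derive that $\{T_k(U_n)\}$ is bounded in $X$ and that $\{U_n\}$ and $\{\nabla U_n\}$ are bounded in the Marcinkiewicz spaces $L^{p_c,\infty}(Q)$ and $L^{m_c,\infty}(Q)$ of (\ref{crit}); the range (\ref{rangep}) is exactly what makes these bounds sit inside $L^1(Q)$. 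A parabolic compactness/time-regularity argument (of Landes or Simon type, using that $(U_n)_t$ is controlled in $X'+L^1(Q)+\mathcal M_b(Q)$) then gives, up to a subsequence, $U_n\to U$ and $u_n\to u=U+h$ a.e.\ in $Q$, with $T_k(U)\in X$ and $T_k(U_n)\rightharpoonup T_k(U)$ weakly in $X$.

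\medskip

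The heart of the proof is the \emph{almost-everywhere convergence of the gradients}. Here I would adapt the DMOP argument: the singular parts $\rho_{n,s},\eta_{n,s}$ are concentrated, up to a small error in capacity, on sets covered by functions $\psi_\delta$ that are small in $W$ and close to $0$ or $1$; testing the equation with $T_k(U_n-U_m)$ multiplied by suitable cut-offs built from these $\psi_\delta$ and from truncations at large levels (to kill the renormalized tails and the singular masses), one shows that
\[
\limsup_{n,m\to\infty}\int_{\{|U_n-U_m|\le k\}}\bigl(A(x,t,\nabla u_n)-A(x,t,\nabla u_m)\bigr)\cdot\nabla(U_n-U_m)\,dx\,dt=0 .
\]
Combined with the strict monotonicity (\ref{condi2}) and a standard Minty/truncation argument, this yields that $\{\nabla U_n\}$ is Cauchy in measure, hence $\nabla U_n\to \nabla U$ a.e., and consequently $\nabla u_n=\nabla U_n+\nabla h_n\to\nabla U+\nabla h=\nabla u$ a.e.\ (using $h_n\to h$ in $X$). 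One then upgrades to $T_k(U_n)\to T_k(U)$ \emph{strongly} in $X$ by showing equi-integrability of $A(x,t,\nabla u_n)\cdot\nabla T_k(U_n)$.

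\medskip

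Finally I would \emph{pass to the limit in the renormalized formulation}. The diffuse data converge by hypothesis ($f_n\rightharpoonup f$ in $L^1$, $g_n\to g$ in $(L^{p'})^N$, $h_n\to h$ in $X$), the flux terms $A(x,t,\nabla u_n)S'(U_n)$ converge by the a.e.\ convergence of the gradients plus the truncation structure, and the two ``energy'' defect measures associated with $\rho_n$ and $\eta_n$ are identified, in the limit, with $\mu_s^+$ and $\mu_s^-$ respectively, using the narrow convergence $\rho_n\to\mu_s^+$, $\eta_n\to\mu_s^-$ together with the decompositions $\rho_n=\rho_n^1-\operatorname{div}\rho_n^2+\rho_{n,s}$ (and the boundedness of $\rho_n^1$ in $L^1$, $\rho_n^2$ in $(L^{p'})^N$) to separate the diffuse piece (which is absorbed into $f,g$) from the singular piece. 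Checking that the limit $u$ satisfies the decay condition on $\{|U|\ge k\}$ and the correct weak formulation for every admissible $S$ then shows $u$ is an R-solution of (\ref{pmu}) relative to $(f,g,h)$.

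\medskip

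\textbf{Main obstacle.} The delicate point is precisely the handling of the singular measures in the gradient-convergence step: unlike the elliptic case, the test functions must be localized in time as well as space, and one must simultaneously control the contribution of $(h_n)_t$ (which is only in $X'$, not a function) and ensure that the cut-offs that neutralize $\rho_{n,s},\eta_{n,s}$ do not destroy the time-integration-by-parts identities. Getting a clean ``$\limsup=0$'' for the monotonicity integral, uniformly in $n,m$, despite these competing error terms, is the technical core of the argument.
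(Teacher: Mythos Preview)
Your overall architecture is correct and matches the paper: a priori estimates from Proposition~\ref{mun} give the Marcinkiewicz bounds and the a.e.\ convergence $U_n\to U$, the core is the a.e.\ convergence of gradients via a monotonicity argument localized near/far from the singular support using cut-offs $\psi_\delta^\pm$, and one then passes to the limit in the renormalized formulation. Your ``main obstacle'' paragraph correctly identifies the difficulty.

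However, the specific device you propose for the gradient step---testing with $T_k(U_n-U_m)$ and showing the double-sequence monotonicity integral vanishes---is not what the paper does, and it would not go through as stated. The obstruction is exactly the one you flag but do not resolve: in the renormalized formulation (\ref{renor}) the test function $\varphi$ must satisfy $\varphi_t\in X'+L^1(Q)$, and $T_k(U_n-U_m)$ (or any function built from $U_m$) does not have this time regularity. There is no equation to subtract that would make the time terms cancel cleanly, because $\mu_n\ne\mu_m$ and the singular parts sit on different sets. This is also not the DMOP strategy: already in the elliptic case \cite{DMOP} one compares $u_n$ with the \emph{limit} $T_k(u)$, not with $u_m$.

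The paper's solution is to compare $U_n$ with the Landes time-approximation $\langle T_k(U)\rangle_\nu$ of the limit, which by construction has $(\langle T_k(U)\rangle_\nu)_t\in X$ and is therefore admissible. One proves
\[
\overline{\lim}_{n,\nu}\int_{\{|U_n|\le k\}}A(x,t,\nabla u_n)\cdot\nabla\bigl(U_n-\langle T_k(U)\rangle_\nu\bigr)\le 0,
\]
by splitting with $\Phi_{\delta_1,\delta_2}=\psi_{\delta_1}^+\psi_{\delta_2}^++\psi_{\delta_1}^-\psi_{\delta_2}^-$ into a ``near $E$'' piece (Lemma~\ref{near}, using the auxiliary truncation $\hat S_{k,m}$ and the \emph{double} cut-off $\psi_{\delta_1}^+\psi_{\delta_2}^+$, which is essential) and a ``far from $E$'' piece (Lemma~\ref{far}, following \cite{Por99}, with Steklov averages to regularize in time and the function $R_{n,\nu,\ell}=T_{\ell+k}(U_n-\langle T_k(U)\rangle_\nu)-T_{\ell-k}(U_n-T_k(U_n))$). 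The preliminary Lemma~\ref{april261} controls the energy on strips $\{m\le|U_n|<2m\}$ weighted by $\psi_\delta$, which is what makes both pieces small. Once you replace your $n,m$ comparison by this $n,\nu$ comparison against $\langle T_k(U)\rangle_\nu$, the rest of your outline (strong convergence of $T_k(U_n)$ in $X$, passage to the limit, identification of $\mu_s^\pm$ via narrow convergence) matches the paper.
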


In Section \ref{prox} we check that any measure $\mu$ $\in\mathcal{M}_{b}%
({Q})$ can be approximated in the sense of the stability Theorem, hence we
find again the existence result of \cite{Pe08}:

\begin{corollary}
\label{051120131} Let $u_{0}\in L^{1}(\Omega)$ and $\mu\in\mathcal{M}_{b}(Q)$.
Then there exists a R-solution $u$ to the problem (\ref{pmu}) with data
$(\mu,u_{0}).$
\end{corollary}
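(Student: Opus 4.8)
The plan is to approximate the data $(\mu,u_0)$ by data falling under the hypotheses of the stability Theorem \ref{sta} and for which an R-solution is already known to exist, and then to let $n\to\infty$; the construction of the approximations is the content of Section \ref{prox}.

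By \cite{DrPoPr}, write $\mu=\mu_0+\mu_s^+-\mu_s^-$ with $\mu_0\in\mathcal M_b(Q)\cap\mathcal M_0(Q)$ and $\mu_s^\pm\in\mathcal M_s^+(Q)$, and fix a decomposition $\mu_0=f-\operatorname{div}g+h_t$, $f\in L^1(Q)$, $g\in(L^{p^{\prime}}(Q))^N$, $h\in X$. Keep $f_n:=f$, $g_n:=g$, $h_n:=h$ constant in $n$ (these sequences converge trivially, weakly in $L^1(Q)$, resp. strongly in $(L^{p^{\prime}}(Q))^N$, resp. strongly in $X$). Choose $u_{0,n}\in C_c^\infty(\Omega)$ with $u_{0,n}\to u_0$ in $L^1(\Omega)$. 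Approximate the singular parts by mollification: for a sequence of mollifiers $(\omega_n)$ on $\mathbb R^{N+1}$, set $\rho_n:=\omega_n*\bar\mu_s^+$ and $\eta_n:=\omega_n*\bar\mu_s^-$, where $\bar\mu_s^\pm$ denotes the restriction of $\mu_s^\pm$ to $\{(x,t)\in Q:\operatorname{dist}((x,t),\partial Q)>\delta_n\}$, extended by $0$, with $\delta_n\downarrow0$ chosen so that $\rho_n,\eta_n\in C_c^\infty(Q)$. These are nonnegative, $\|\rho_n\|_{L^1(Q)}\le|\mu_s^+|(Q)$ and $\|\eta_n\|_{L^1(Q)}\le|\mu_s^-|(Q)$, and $\rho_n\to\mu_s^+$, $\eta_n\to\mu_s^-$ in the narrow topology of measures. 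Put $\rho_n^1:=\rho_n$, $\eta_n^1:=\eta_n$, $\rho_n^2:=\eta_n^2:=0$, $\rho_{n,s}:=\eta_{n,s}:=0$. Then $\mu_n:=f-\operatorname{div}g+h_t+\rho_n-\eta_n$ satisfies $|\mu_n|(Q)\le|\mu_0|(Q)+|\mu_s^+|(Q)+|\mu_s^-|(Q)$ for every $n$, and all the convergence and boundedness requirements of Theorem \ref{sta} are met.

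Since $\rho_n-\eta_n\in L^1(Q)\subset\mathcal M_0(Q)$ and $\mu_0\in\mathcal M_0(Q)$, the measure $\mu_n=\mu_0+\rho_n-\eta_n$ is diffuse, $\mu_n\in\mathcal M_b(Q)\cap\mathcal M_0(Q)$. Hence, by the existence result of \cite{DrPoPr} for soft measures, problem (\ref{pmun}) with datum $(\mu_n,u_{0,n})$ has a renormalized solution $u_n$; by the properties of R-solutions established in Section \ref{defsol} (in particular the independence of the renormalized formulation of a diffuse measure from the chosen decomposition), $u_n$ is an R-solution of (\ref{pmun}) relative to the decomposition $(f+\rho_n-\eta_n,\,g,\,h)$ of $\mu_{n,0}=\mu_n$, the singular part $\mu_{n,s}$ being $0$. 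Applying Theorem \ref{sta} to $\{u_n\}$, up to a subsequence $\{u_n\}$ converges $a.e.$ in $Q$ to an R-solution $u$ of (\ref{pmu}) relative to $(f,g,h)$, with data $(\mu,u_0)$; this proves the corollary.

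The routine but essential point is checking that the approximations verify \emph{every} hypothesis of Theorem \ref{sta}: the nontrivial one is the \emph{narrow} (not merely weak-$\ast$) convergence $\rho_n\to\mu_s^+$, $\eta_n\to\mu_s^-$, which holds because convolution conserves total mass and the supports are kept inside $Q$. One also needs the identification, recalled in Section \ref{defsol}, of the \cite{DrPoPr} renormalized solution with an R-solution relative to the chosen decomposition; everything else is immediate.
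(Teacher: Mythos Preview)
Your argument is correct. The route you take is a genuine variant of the paper's: the paper proves Proposition~\ref{4bhatt}, which smooths \emph{all} of $f,g,h$ and $\mu_s$ simultaneously so that each $\mu_n\in C_c^\infty(Q)$, whereas you keep $f,g,h$ fixed and mollify only the singular parts $\mu_s^\pm$, landing on diffuse (not smooth) approximating measures $\mu_n\in\mathcal M_0(Q)$. Your approach is more economical for this particular corollary, since the hypotheses of Theorem~\ref{sta} do not require smoothness of $f_n,g_n,h_n$ and the existence for diffuse data is already available from~\cite{DrPoPr}. The paper's approach, on the other hand, yields the additional quantitative control \eqref{4bhdecn} and the fully smooth approximants of Proposition~\ref{4bhatt}, which it explicitly flags as ``a key point for finding applications to the stability Theorem'' (e.g.\ Proposition~\ref{P5} and the perturbed problems in~\cite{BiNgQu1,BiNgQu2,NgQu}); this extra structure is not needed to deduce Corollary~\ref{051120131} alone.

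One small point worth making precise: you invoke the identification of the \cite{DrPoPr} renormalized solution with an R-solution in the sense of Definition~\ref{defin} relative to the specific decomposition $(f+\rho_n-\eta_n,\,g,\,h)$. This is indeed standard---for $\mu_s=0$ the conditions \eqref{renor2}--\eqref{renor3} reduce to the energy decay already built into the \cite{DrPoPr} formulation---but note that Remark~2.3(ii) in the paper only transfers the R-solution property between decompositions whose $h$-components differ by an $L^\infty$ function, so strictly speaking one should either cite the equivalence for diffuse measures directly (as established in \cite{DrPoPr,Pe08}) or observe that the \cite{DrPoPr} construction can be carried out starting from the given decomposition. This is a bookkeeping issue rather than a gap, and the paper's own proof has the analogous implicit step when it passes from smooth data to R-solutions.
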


Moreover we give more precise properties of approximations of $\mu
\in\mathcal{M}_{b}(Q),$ fundamental for applications, see Propositions
\ref{4bhatt} and \ref{P5}. As in the elliptic case, Theorem \ref{sta} is a key
point for obtaining existence results for more general problems, and we give
some of them in \cite{BiNgQu1,BiNgQu2,NgQu}, for measures $\mu$ satisfying
suitable capacitary conditions. In \cite{BiNgQu1} we study perturbed problems
of order $0$, of type
\begin{equation}
{u_{t}}-\Delta_{p}u+\mathcal{G}(u)=\mu\qquad\text{in }Q,\label{equg}%
\end{equation}
where $\mathcal{G}(u)$ is an absorption or a source term with a growth of
power or exponential type, and $\mu$ is a good in time measure. In
\cite{BiNgQu2} we use potential estimates to give other existence results in
case of absorption with $p>2$. In \cite{NgQu}, one considers equations of the
form
\[
{u_{t}}-\operatorname{div}(A(x,t,\nabla u))+\mathcal{G}(u,\nabla u)=\mu
\]
under (\ref{condi1}),(\ref{condi2}) with $p=2,$ and extend in particular the
results of \cite{BaPi} to nonlinear operators.

\section{Renormalized solutions of problem (\ref{pmu})\label{defsol}}

\subsection{Notations and Definition}

For any function $f\in L^{1}(Q),$ we write $\int_{Q}f$ instead of $\int%
_{Q}fdxdt,$ and for any measurable set $E\subset${$Q$}${,}$ $\int_{E}f$
instead of $\int_{E}fdxdt.$ For any open set $\varpi$ of $\mathbb{R}^{m}$ and
$F\in(L^{k}(\varpi))^{\nu},$ $k\in\left[  1,\infty\right]  ,m,\nu\in
\mathbb{N}^{\ast},$ we set $\left\Vert F\right\Vert _{k,\varpi}=\left\Vert
F\right\Vert _{(L^{k}(\varpi))^{\nu}}$\medskip

\noindent We set $T_{k}(r)=\max\{\min\{r,k\},-k\},$ for any $k>0$ and
$r\in\mathbb{R}$. We recall that if $u$ is a measurable function defined and
finite $a.e.$ in $Q$, such that $T_{k}(u)\in X$ for any $k>0$, there exists a
measurable function $w$ from $Q$ into $\mathbb{R}^{N}$ such that $\nabla
T_{k}(u)=\chi_{|u|\leq k}w,$ $a.e.$ in $Q,$ and for any $k>0$. We define the
gradient $\nabla u$ of $u$ by $w=\nabla u$. \medskip

\noindent Let $\mu=\mu_{0}+\mu_{s}\in\mathcal{M}_{b}(${$Q$}$),$ and $(f,g,h)$
be a decomposition of $\mu_{0}$ given by (\ref{dec}), and ${{\widehat{\mu_{0}%
}}}=\mu_{0}-h_{t}=f-\operatorname{div}g$. In the general case ${{\widehat{\mu
_{0}}}}\notin\mathcal{M}(Q),$ but we write, for convenience,%
\[
\int_{Q}{wd{\widehat{\mu_{0}}}}:=\int_{Q}(fw+g.\nabla w),\qquad\forall w\in
X{\cap}L^{\infty}(Q).
\]

\begin{definition}
\label{defin}Let ${u}_{0}\in L^{1}(\Omega),$ $\mu=\mu_{0}+\mu_{s}%
\in\mathcal{M}_{b}(${$Q$}$)$. A measurable function $u$ is a
\textbf{renormalized solution, }called\textbf{\ R-solution} of (\ref{pmu}) if
there exists a decompostion $(f,g,h)$ of $\mu_{0}$ such that
\begin{equation}
U=u-h\in L^{\sigma}((0,T);W_{0}^{1,\sigma}(\Omega))\cap L^{\infty}%
((0,T);L^{1}(\Omega)),\quad\forall\sigma\in\left[  1,m_{c}\right)  ;\qquad
T_{k}(U)\in X,\quad\forall k>0, \label{defv}%
\end{equation}
and:\medskip

(i) for any $S\in W^{2,\infty}(\mathbb{R})$ such that $S^{\prime}$ has compact
support on $\mathbb{R}$, and $S(0)=0$,%
\begin{equation}
-\int_{\Omega}S(u_{0})\varphi(0)dx-\int_{Q}{{\varphi_{t}}S(U)}+\int%
_{Q}{S^{\prime}(U)A(x,t,\nabla u).\nabla\varphi}+\int_{Q}{S^{\prime\prime
}(U)\varphi A(x,t,\nabla u).\nabla U}=\int_{Q}{S^{\prime}(U)\varphi
d{\widehat{\mu_{0}},}} \label{renor}%
\end{equation}
for any $\varphi\in X\cap L^{\infty}(Q)$ such that $\varphi_{t}\in X^{\prime
}+L^{1}(Q)$ and $\varphi(.,T)=0$;\medskip

(ii) for any $\phi\in C(\overline{{Q}}),$%
\begin{equation}
\lim_{m\rightarrow\infty}\frac{1}{m}\int\limits_{\left\{  m\leq U<2m\right\}
}{\phi A(x,t,\nabla u).\nabla U}=\int_{Q}\phi d\mu_{s}^{+} \label{renor2}%
\end{equation}%
\begin{equation}
\lim_{m\rightarrow\infty}\frac{1}{m}\int\limits_{\left\{  -m\geq
U>-2m\right\}  }{\phi A(x,t,\nabla u).\nabla U}=\int_{Q}\phi d\mu_{s}^{-}.
\label{renor3}%
\end{equation}

\end{definition}

\begin{remark}
As a consequence, $S(U)\in C([0,T];L^{1}(\Omega))$ and ${S(}${$U$%
}${)(.,0)=S(u}_{0})$ in $\Omega;$ and $u$ satisfies the equation
\begin{equation}
({S(U))}_{t}-\operatorname{div}({S^{\prime}(U)A(x,t,\nabla u))+S^{\prime
\prime}(U)A(x,t,\nabla u).\nabla U{=f}S^{\prime}(U)-\operatorname{div}%
(gS^{\prime}(U))+S^{\prime\prime}(U)g.\nabla U,}\text{ } \label{dpri}%
\end{equation}
in the sense of distributions in $Q,$ see \cite[Remark 3]{Pe08}. Moreover
assume that $\left[  -k,k\right]  \supset$ supp$S^{\prime}.$ then from
(\ref{condi1}) and the H\"{o}lder inequality, we find easily that
\begin{align}
{\left\Vert {S{{(U)}_{t}}}\right\Vert _{{X}^{\prime}+{L^{1}}({Q})}}  &  \leq
C\left\Vert S\right\Vert _{{W^{2,\infty}}(\mathbb{R})}\left(  {}\right.
\left\Vert {\left\vert {\nabla u}\right\vert }^{p}\chi_{|U|\leq k}\right\Vert
_{1,Q}^{1/p^{\prime}}+\left\Vert {|\nabla u|^{p}\chi_{|U|\leq k}}\right\Vert
_{1,Q}+\left\Vert |\nabla T_{k}(U)|\right\Vert _{p,Q}^{p}\nonumber\\
&  +\left\Vert a\right\Vert _{p^{\prime},Q}+\left\Vert a\right\Vert
_{p^{\prime},Q}^{p^{\prime}}+\left\Vert f\right\Vert _{1,Q}+\left\Vert
g\right\Vert _{p^{\prime},Q}\left\Vert \left\vert {\nabla u}\right\vert
^{p}\chi_{|U|\leq k}\right\Vert _{1,Q}^{1/p}+\left\Vert g\right\Vert
_{p^{\prime},Q}\left.  {}\right)  , \label{11051}%
\end{align}
where $C=C(p,\Lambda_{2}).$ We also deduce that, for any $\varphi\in X\cap
L^{\infty}(Q),$ such that $\varphi_{t}{\in X}^{\prime}+L^{1}(Q),$
\begin{align}
\int_{\Omega}S(U(T))\varphi(T)dx-\int_{\Omega}S(u_{0})\varphi(0)dx  &
-\int_{Q}{{\varphi_{t}}S(U)}+\int_{Q}{S^{\prime}(U)A(x,t,\nabla u).\nabla
\varphi}\nonumber\\
&  +\int_{Q}{S^{\prime\prime}(U)A(x,t,\nabla u).\nabla U\varphi}=\int%
_{Q}{S^{\prime}(U)\varphi d{\widehat{\mu_{0}}.}} \label{renor4}%
\end{align}

\end{remark}

\begin{remark}
Let $u,U$ satisfy (\ref{defin}). It is easy to see that the condition
(\ref{renor2}) ( resp. (\ref{renor3}) ) is equivalent to
\begin{equation}
\lim_{m\rightarrow\infty}\frac{1}{m}\int\limits_{\left\{  m\leq U<2m\right\}
}{\phi A(x,t,\nabla u).\nabla u}=\int_{Q}\phi d\mu_{s}^{+} \label{lim1}%
\end{equation}
resp.
\begin{equation}
\lim_{m\rightarrow\infty}\frac{1}{m}\int\limits_{\left\{  m\geq U>-2m\right\}
}{\phi A(x,t,\nabla u).\nabla u}=\int_{Q}\phi d\mu_{s}^{-}. \label{lim2}%
\end{equation}
In particular, for any $\varphi\in L^{p^{\prime}}(Q)$ there holds
\begin{equation}
\lim_{m\rightarrow\infty}\frac{1}{m}\int\limits_{m\leq|U|<2m}{|\nabla
u|\varphi}=0,\qquad\lim_{m\rightarrow\infty}\frac{1}{m}\int\limits_{m\leq
|U|<2m}{|\nabla U|\varphi}=0. \label{lim3}%
\end{equation}

\end{remark}

\begin{remark}
(i) Any function $U\in X$ such that $U_{t}\in X^{\prime}+L^{1}(Q)$ admits a
unique $c_{p}^{Q}$-quasi continuous representative, defined $c_{p}^{Q}$-quasi
$a.e.$ in $Q,$ still denoted $U.$ Furthermore, if $U\in L^{\infty}(Q),$ then
for any $\mu_{0}\in\mathcal{M}_{0}(Q),$ there holds $U\in L^{\infty}%
(Q,d\mu_{0}),$ see \cite[Theorem 3 and Corollary 1]{Pe08}.\medskip

(ii) Let $u$ be any R- solution of problem (\ref{pmu}). Then, $U=u-h$ admits a
$c_{p}^{Q}$-quasi continuous functions representative which is finite
$c_{p}^{Q}$-quasi $a.e.$ in $Q,$ and $u$ satisfies definition \ref{defin} for
every decomposition $(\tilde{f},\tilde{g},\tilde{h})$ such that $h-\tilde
{h}\in L^{\infty}(Q)$, see \cite[Proposition 3 and Theorem 4 ]{Pe08}.
\end{remark}

\subsection{Steklov and Landes approximations}

\textit{A main difficulty for proving Theorem \ref{sta} is the choice of
admissible test functions }$(S,\varphi)$\textit{ in (\ref{renor}), valid for
any R-solution}. Because of a lack of regularity of these solutions, we use
two ways of approximation adapted to parabolic equations:

\begin{definition}
\label{ste}Let $\varepsilon\in(0,T)$ and $z\in L_{loc}^{1}(Q)$. For any
$l\in(0,\varepsilon)$ we define the \textbf{Steklov time-averages}
$[z]_{l},[z]_{-l}$ of $z$ by
\[
{[z]_{l}}(x,t)=\frac{1}{l}\int\limits_{t}^{t+l}{z(x,s)ds}\qquad\text{for
}a.e.\;(x,t)\in\Omega\times(0,T-\varepsilon),
\]%
\[
{[z]_{-l}}(x,t)=\frac{1}{l}\int\limits_{t-l}^{t}{z(x,s)ds}\qquad\text{for
}a.e.\;(x,t)\in\Omega\times(\varepsilon,T).
\]

\end{definition}

\noindent The idea to use this approximation for R-solutions can be found in
\cite{BlPo}. Recall some properties, given in \cite{PePoPor}. Let
$\varepsilon\in(0,T),$ and $\varphi_{1}\in C_{c}^{\infty}(\overline{\Omega
}\times\lbrack0,T)),\;\varphi_{2}\in C_{c}^{\infty}(\overline{\Omega}%
\times(0,T])$ with $\mathrm{Supp}\varphi{{_{1}}}\subset\overline{\Omega}%
\times\lbrack0,T-\varepsilon],\;\mathrm{Supp}\varphi{{_{2}}}\subset
\overline{\Omega}\times\lbrack\varepsilon,T]$. There holds: \medskip

\noindent(i) If $z\in{X}$, then $\varphi_{1}{[z]_{l}}$ and $\varphi
_{2}{[z]_{-l}}\in{W.}$\medskip

\noindent(ii) If $z\in X$ and $z_{t}\in X^{\prime}+L^{1}(Q),$ then, as
$l\rightarrow0,$ $(\varphi_{1}{[z]_{l})}$ and $(\varphi_{2}{[z]_{-l})}$
converge respectively to $\varphi_{1}z$ and $\varphi_{2}z$ in $X,$ and $a.e.$
in $Q;$ and $(\varphi_{1}{[z]_{l})}_{t},(\varphi_{2}{[z]_{-l})}_{t}$ converge
to $(\varphi_{1}z)_{t},(\varphi_{2}z)_{t}$ in $X^{\prime}+L^{1}(Q)$.\medskip

\noindent(iii) If moreover $z\in L^{\infty}(Q)$, then from any sequence
$\{l_{n}\}\rightarrow0,$ there exists a subsequence $\{l_{\nu}\}$ such that
$\left\{  [z]_{l_{\nu}}\right\}  ,\left\{  [z]_{-l_{\nu}}\right\}  $ converge
to $z,$ $c_{p}^{Q}$-quasi everywhere in $Q.$\medskip

Next we recall the approximation used in several articles
\cite{BlPo1,DAOr,BDGO97}, first introduced in \cite{La}.

\begin{definition}
Let $k>0$, and $y\in L^{\infty}(\Omega)$ and $Y\in X$ such that
$||y||_{L^{\infty}(\Omega)}\leq k$ and $||Y||_{L^{\infty}(Q)}\leq k$. For any
$\nu\in\mathbb{N},$ a \textbf{Landes-time approximation} $\langle
Y\rangle_{\nu}$ of the function $Y$ is defined as follows:
\[
\langle Y\rangle_{\nu}(x,t)=\nu\int_{0}^{t}Y(x,s)e^{\nu(s-t)}ds+e^{-\nu
t}z_{\nu}(x),\qquad\forall(x,t)\in Q.
\]
where $\left\{  z_{\nu}\right\}  $ is a sequence of functions in $W_{0}%
^{1,p}(\Omega)\cap L^{\infty}(\Omega)$, such that $||z_{\nu}||_{L^{\infty
}(\Omega)}\leq k$, $\left\{  z_{\nu}\right\}  $ converges to $y$ $a.e.$ in
$\Omega$, and $\nu^{-1}||z_{\nu}||_{W_{0}^{1,p}(\Omega)}^{p}$ converges to $0$.
\end{definition}

Therefore, we can verify that $(\langle Y\rangle_{\nu})_{t}\in X$, $\langle
Y\rangle_{\nu}\in X\cap L^{\infty}(Q)$, $||\langle Y\rangle_{\nu}||_{\infty
,Q}\leq k$ and $\left\{  \langle Y\rangle_{\nu}\right\}  $ converges to $Y$
strongly in $X$ and $a.e.$ in $Q$. Moreover, $\langle Y\rangle_{\nu}$
satisfies the equation $(\langle Y\rangle_{\nu})_{t}=\nu\left(  Y-\langle
Y\rangle_{\nu}\right)  $ in the sense of distributions in $Q,$ and $\langle
Y\rangle_{\nu}(0)={z_{\nu}}\text{ in }\Omega$. In this paper, we only use the
\textbf{Landes-time approximation} of the function $Y=T_{k}(U),$ where
$y=T_{k}(u_{0})$.

\subsection{First properties}

In the sequel we use the following notations: for any function $J\in
{W^{1,\infty}}(\mathbb{R})$, nondecreasing with $J(0)=0,$ we set
\begin{equation}
\overline{J}(r)=\int_{0}^{r}J{(\tau)d\tau,}\text{\qquad}\mathcal{J}%
(r)=\int_{0}^{r}J{^{\prime}(\tau)\tau d\tau.} \label{lam}%
\end{equation}
It is easy to verify that $\mathcal{J}(r)\geq0,$
\begin{equation}
\mathcal{J}(r)+\overline{J}(r)=J(r)r,\quad\text{and\quad}\mathcal{J}%
(r)-\mathcal{J}(s)\geq s\left(  J{(}r{)-J(}s{)}\right)  \quad\quad\forall
r,s\in\mathbb{R}. \label{222}%
\end{equation}
In particular we define, for any $k>0,$ and any $r\in\mathbb{R},$
\begin{equation}
\overline{T_{k}}(r)=\int_{0}^{r}T_{k}{(\tau)d\tau,}\text{\qquad}%
\mathcal{T}_{k}(r)=\int_{0}^{r}T_{k}^{\prime}{(\tau)\tau d\tau,} \label{tkp}%
\end{equation}
and we use several times a truncature used in \cite{DMOP}:
\begin{equation}
H{_{m}}(r)={\chi_{\lbrack-m,m]}}(r)+\frac{{2m-|s|}}{m}{\chi_{m<|s|\leq2m}%
}(r),\qquad\overline{H_{m}}(r)=\int_{0}^{r}H{_{m}(\tau)d\tau.} \label{Hm}%
\end{equation}

The next Lemma allows to extend the range of the test functions in
(\ref{renor}).

\begin{lemma}
\label{integ}Let $u$ be a R-solution of problem (\ref{pmu}). Let
$J\in{W^{1,\infty}}(\mathbb{R})$ be nondecreasing with $J(0)=0$, and
$\overline{J}$ defined by (\ref{lam}). Then,%
\begin{align}
&  \int_{Q}{{S^{\prime}(U)A(x,t,\nabla u).\nabla\left(  {\xi J(S(U))}\right)
}}+\int_{Q}{S^{\prime\prime}(U)A(x,t,\nabla u).\nabla U\xi J(S(U))}\nonumber\\
&  {-\int_{\Omega}{{\xi(0)J(S({u_{0}}))S({u_{0}})dx}}-}\int_{Q}{{{\xi_{t}%
}\overline{J}(S(U))}}\leq\int_{Q}{{S^{\prime}(U)\xi J(S(U))d\widehat{\mu_{0}%
}{,}}}\label{partu}%
\end{align}
for any $S\in{W^{2,\infty}}(\mathbb{R})$ such that $S^{\prime}$ has compact
support on $\mathbb{R}$ and $S(0)=0,$ and for any $\xi\in C^{1}(Q)\cap
W^{1,\infty}(Q),\xi\geq0.$
\end{lemma}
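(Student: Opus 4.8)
The strategy is to take $\varphi = \xi J(S(U))$ as a test function in the renormalized formulation \eqref{renor}, but since $J(S(U))$ need not be smooth enough (and $U$ is only a quasi-continuous function with truncations in $X$), we must approximate $U$ inside $J(S(\cdot))$ by a time-regularization for which the parabolic term can be integrated by parts. The natural choice is the Landes-time approximation $\langle T_k(U)\rangle_\nu$, where $k$ is chosen so that $[-k,k]\supset \mathrm{supp}\, S'$; note that on $\{|U|\le k\}$ one has $S(U)=S(T_k(U))$, so $S(U)$ is a bounded function in $X$ with time derivative in $X'+L^1(Q)$. First I would fix such a $k$, and also a smooth nondecreasing $J$ (the general $W^{1,\infty}$ case follows by approximating $J$ by $J\ast$ mollifier, using dominated convergence and the uniform bound $\|J'\|_\infty$). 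I would also first treat $\xi \in C^1_c(\overline\Omega\times[0,T))$ and recover the general $\xi\in C^1\cap W^{1,\infty}$, $\xi\ge 0$, at the end by a density/truncation argument.

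The core computation: insert $\varphi_\nu := \xi\, J(S(\langle T_k(U)\rangle_\nu))$ into \eqref{renor4} (the version with boundary terms at $T$). This is legitimate because $\langle T_k(U)\rangle_\nu \in X\cap L^\infty(Q)$ with $(\langle T_k(U)\rangle_\nu)_t\in X$, hence $\varphi_\nu\in X\cap L^\infty(Q)$ with $(\varphi_\nu)_t\in X'+L^1(Q)$, and $\varphi_\nu(\cdot,T)$ is controlled. The parabolic term is
\[
-\int_Q (\varphi_\nu)_t\, S(U) + \int_\Omega S(U(T))\varphi_\nu(T) - \int_\Omega S(u_0)\varphi_\nu(0).
\]
Write $(\varphi_\nu)_t = \xi_t J(S(\langle T_k(U)\rangle_\nu)) + \xi\, J'(S(\langle T_k(U)\rangle_\nu)) S'(\langle T_k(U)\rangle_\nu)(\langle T_k(U)\rangle_\nu)_t$, and use the defining ODE $(\langle T_k(U)\rangle_\nu)_t = \nu(T_k(U) - \langle T_k(U)\rangle_\nu)$. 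The key sign fact is monotonicity: since $J\circ S$ composed appropriately is nondecreasing on the relevant range and $S(U)=S(T_k(U))$, the quantity $\big(S(U)-S(\langle T_k(U)\rangle_\nu)\big)\cdot J'(S(\langle T_k(U)\rangle_\nu))S'(\langle T_k(U)\rangle_\nu)(T_k(U)-\langle T_k(U)\rangle_\nu)\ge 0$, so after multiplying by $\nu\xi\ge 0$ this term has a favorable sign and can be dropped, producing the inequality. The remaining pieces involving $\xi_t$ converge, as $\nu\to\infty$, to $-\int_Q \xi_t\,\overline J(S(U))$ by using $\langle T_k(U)\rangle_\nu \to T_k(U)=:$ (the value of $U$ on $\{|U|\le k\}$) strongly in $X$ and a.e., together with the identity relating $\overline{J}$ to $\int J(S(U))S(U)$... more precisely one uses the algebraic identity $\mathcal J(r)+\overline J(r)=J(r)r$ from \eqref{222} after integrating by parts the $\nu$-term; the convexity inequality $\mathcal J(r)-\mathcal J(s)\ge s(J(r)-J(s))$ in \eqref{222} is exactly what gives the sign needed to discard the error.

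The terms not involving $\varphi_t$ — namely $\int_Q S'(U)A(x,t,\nabla u)\cdot\nabla\varphi_\nu$ and $\int_Q S''(U)A(x,t,\nabla u)\cdot\nabla U\,\varphi_\nu$, and the right-hand side $\int_Q S'(U)\varphi_\nu\, d\widehat{\mu_0}$ — pass to the limit by dominated convergence: $\nabla\varphi_\nu = \nabla\xi\, J(S(\langle T_k(U)\rangle_\nu)) + \xi J'(S(\langle T_k(U)\rangle_\nu))S'(\langle T_k(U)\rangle_\nu)\nabla\langle T_k(U)\rangle_\nu$, and $\nabla\langle T_k(U)\rangle_\nu\to\nabla T_k(U)$ in $(L^p(Q))^N$; all factors are uniformly bounded in $L^\infty$ and $A(x,t,\nabla u)S'(U)\in (L^{p'}(Q))^N$ by \eqref{condi1} since $S'$ is supported in $[-k,k]$. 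One also needs the quasi-continuous representative machinery (Remark iii on Steklov/Landes convergence $c_p^Q$-q.e.) to handle $\int_Q S'(U)\varphi_\nu\,d\widehat{\mu_0}$, i.e.\ to know $J(S(\langle T_k(U)\rangle_\nu))\to J(S(U))$ in the appropriate sense against the diffuse part — here $\widehat{\mu_0}=f-\operatorname{div}g$ is handled by its definition as $\int(fw+g\cdot\nabla w)$, so really only $X$-convergence and a.e.\ convergence are needed.

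The main obstacle I expect is making rigorous the passage to the limit in the parabolic term and extracting the correct sign: one must carefully combine the ODE satisfied by the Landes approximation, the integration by parts in $t$ (with the boundary terms at $t=0$ and $t=T$), and the convexity inequality \eqref{222}, while simultaneously controlling that the ``bad'' term has the right sign for \emph{every} admissible nonnegative $\xi$ and \emph{every} such $S$. A secondary technical point is the initial-datum term $\int_\Omega \xi(0) J(S(\langle T_k(U)\rangle_\nu))(0)S(\cdot)$... : here $\langle T_k(U)\rangle_\nu(0)=z_\nu\to T_k(u_0)$ a.e.\ with $\|z_\nu\|_\infty\le k$, so $J(S(z_\nu))S(z_\nu)\to J(S(u_0))S(u_0)$ boundedly, giving the stated term $\int_\Omega \xi(0)J(S(u_0))S(u_0)\,dx$; one must check this matches after the $\overline J$ rewriting, which is where the identity $\overline J(r)+\mathcal J(r)=J(r)r$ is used at $r=S(u_0)$ as well.
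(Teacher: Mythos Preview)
Your overall strategy---regularize the test function in time, exploit a convexity/monotonicity inequality to handle the parabolic term, and pass to the limit in the remaining terms---is the right one and matches the paper in spirit. The gap is in the sign argument.

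You regularize $T_k(U)$ via the Landes approximation $w_\nu:=\langle T_k(U)\rangle_\nu$, take $\varphi_\nu=\xi\,J(S(w_\nu))$, and claim that the cross term
\[
\nu\,\xi\,J'(S(w_\nu))\,S'(w_\nu)\,(T_k(U)-w_\nu)\bigl(S(U)-S(w_\nu)\bigr)
\]
is nonnegative because ``$J\circ S$ is nondecreasing on the relevant range''. But the lemma assumes only $S\in W^{2,\infty}(\mathbb{R})$ with $S'$ of compact support and $S(0)=0$; there is \emph{no} monotonicity hypothesis on $S$, and $J\circ S$ need not be monotone. Concretely, take $S$ piecewise linear with $S(r)=r$ on $[0,1]$, $S(r)=2-r$ on $[1,2]$, and set $b=w_\nu=0.5$, $a=T_k(U)=1.8$: then $S'(b)=1$, $a-b=1.3$, $S(a)-S(b)=0.2-0.5=-0.3$, and the product is strictly negative. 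Since this term carries a factor $\nu$, it cannot simply be dropped.

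The paper sidesteps this by regularizing $J(S(U))$ \emph{as a whole} via the Steklov average: one takes $\varphi=\zeta\xi\,[J(S(U))]_l$ with $\zeta\in C_c^1([0,T))$, $\zeta_t\le 0$. The parabolic term then contains the finite difference $S(U)(t)\bigl(J(S(U))(t+l)-J(S(U))(t)\bigr)$, to which the convexity inequality \eqref{222}, $\mathcal{J}(r)-\mathcal{J}(s)\ge s(J(r)-J(s))$, is applied with $r=S(U)(t+l)$, $s=S(U)(t)$. This uses only the monotonicity of $J$ (never that of $S$), turns the difference into the Steklov derivative of $\mathcal{J}(S(U))$, and after integrating by parts and letting $l\to 0$, $\zeta\to 1$ yields $-\int_Q\xi_t\,\overline{J}(S(U))$ together with the nonnegative boundary term at $t=0$. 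Your argument can be repaired along the same lines: either switch to Steklov averages of $J(S(U))$ as the paper does, or, if you prefer the Landes device, apply it to $S(U)$ itself (which lies in $X\cap L^\infty(Q)$ with $S(U)_t\in X'+L^1(Q)$ and initial value $S(u_0)$) rather than to $T_k(U)$, so that the convexity inequality \eqref{222} can be invoked with argument $S(U)$ and no monotonicity of $S$ is needed.
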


\begin{proof}
Let $\mathcal{J}{\ }$be defined by (\ref{lam}). Let $\zeta\in C_{c}%
^{1}([0,T))$ with values in $[0,1],$ such that $\zeta_{t}\leq0$, and
$\varphi=\zeta\xi\lbrack j{(S(}${$U$}${))}]_{l}$. Clearly, $\varphi\in X\cap
L^{\infty}(Q)$; we choose the pair of functions $(\varphi,S)$ as test function
in (\ref{renor}). From the convergence properties of Steklov time-averages, we
easily will obtain (\ref{partu}) if we prove that
\[
\lim_{\overline{l\rightarrow0,\zeta\rightarrow1}}(-\int_{Q}{{{\left(
{\zeta\xi{{\left[  {j(S(U))}\right]  }_{l}}}\right)  }_{t}}S(U))}\geq-\int%
_{Q}{{\xi_{t}}\overline{J}(S(U)).}%
\]
We can write $-\int_{Q}{{{\left(  {\zeta\xi{{\left[  {j(S(U))}\right]  }_{l}}%
}\right)  }_{t}}S(}${$U$}${)}=F+G,$ with
\[
F=-\int_{Q}{(\zeta\xi)_{t}}{{\left[  {j(S(U))}\right]  }_{l}}S(U),\qquad
G=-\int_{Q}{\zeta\xi S(U)\frac{1}{l}\left(  {j(S(U))(x,t+l)-j(S(U))(x,t)}%
\right)  .}%
\]
\newline Using (\ref{222}) and integrating by parts we have
\begin{align*}
G &  \geq-\int_{Q}{\zeta\xi\frac{1}{l}\left(  \mathcal{J}{(S(U))(x,t+l)-}%
\mathcal{J}{(S(U))(x,t)}\right)  }=-\int_{Q}{\zeta\xi\frac{\partial}{{\partial
t}}\left(  {{{\left[  \mathcal{J}{(S(U))}\right]  }_{l}}}\right)  }\\
&  =\int_{Q}{{(\zeta\xi)_{t}}{{\left[  \mathcal{J}{(S(U))}\right]  }_{l}}%
}+\int_{\Omega}{\zeta(0)\xi(0)}{\left[  \mathcal{J}{(S(U))}\right]  _{l}%
}(0)dx\geq\int_{Q}{{(\zeta\xi)_{t}}{{\left[  \mathcal{J}{(S(U))}\right]  }%
_{l}}},
\end{align*}
since $\mathcal{J}{(S(}${$U$}${))}$ $\geq0.$ Hence,
\[
-\int_{Q}{{{\left(  {\zeta\xi{{\left[  {j(S(U))}\right]  }_{l}}}\right)  }%
_{t}}S(U)}\geq\int_{Q}{{(\zeta\xi)_{t}}{{\left[  \mathcal{J}{(S(U))}\right]
}_{l}}}+F=\int_{Q}{{(\zeta\xi)_{t}}\left(  {{{\left[  \mathcal{J}%
{(S(U))}\right]  }_{l}}-{{\left[  J{(S(U))}\right]  }_{l}}S(U)}\right)  .}%
\]
Otherwise, $\mathcal{J}(S(U))$ and $J(S(U))\in C(\left[  0,T\right]  {;L}%
^{1}{(\Omega))}$, thus $\left\{  {(\zeta\xi)_{t}}\left(  {{{\left[
\mathcal{J}{(S(u))}\right]  }_{l}}-{{\left[  J{(S(u))}\right]  }_{l}}%
S(u)}\right)  \right\}  $ converges to $-{(\zeta\xi)_{t}}\overline{J}(S(u))$
in $L^{1}(${$Q$}$)$ as $l\rightarrow0$. Therefore,
\[
\lim_{\overline{l\rightarrow0,\zeta\rightarrow1}}({-}\int_{Q}{{{{\left(
{\zeta\xi{{[J(S(U))]}_{l}}}\right)  }_{t}}S(U))}}\geq\lim_{\overline
{\zeta\rightarrow1}}\left(  {-}\int_{Q}{{{{\left(  {\zeta\xi}\right)  }_{t}%
}\overline{J}(S(U))}}\right)  \geq-\int_{Q}{{\xi_{t}}\overline{J}(S(U))},
\]
which achieves the proof.\medskip
\end{proof}

Next we give estimates of the function and its gradient, following the first
ones of \cite{BDGO97}, inspired by the estimates of the elliptic case of
\cite{BBGGPV}. In particular we extend and make more precise the a priori
estimates of \cite[Proposition 4]{Pe08} given for solutions with smooth data;
see also \cite{DrPoPr,LePe}.

\begin{proposition}
\label{estsup}If $u$ is a R-solution of problem (\ref{pmu}), then there exists
$C_{1}=C_{1}(p,\Lambda_{1},\Lambda_{2})$ such that, for any $k\geq1$ and
$\ell$ $\geq0,$
\begin{equation}
\int\limits_{\ell\leq|U|\leq\ell+k}{{{\left\vert {\nabla u}\right\vert }^{p}%
+}}\int\limits_{{\ell}\leq{|U|}\leq{\ell+k}}{{{\left\vert {\nabla
U}\right\vert }^{p}}}\leq C_{1}kM, \label{alp}%
\end{equation}%
\begin{equation}
{\left\Vert U\right\Vert _{{L^{\infty}}(((0,T));{L^{1}}(\Omega))}}\leq
C_{1}({M}+|\Omega|), \label{gam}%
\end{equation}
where $M={{{\left\Vert {{u_{0}}}\right\Vert }_{1,\Omega}}}+\left\vert {\mu
_{s}}\right\vert (Q){+}\left\Vert f\right\Vert _{1,Q}+\left\Vert g\right\Vert
_{p^{\prime},Q}^{p^{\prime}}+{\left\Vert h\right\Vert _{{X}}^{p}%
}+||a||_{p^{\prime},Q}^{p^{\prime}}.$

\noindent As a consequence, for any $k$ $\geq1,$
\begin{equation}
\mathrm{meas}\left\{  {|U|>k}\right\}  \leq{C_{2}M_{1}}{k^{-p_{c}},\qquad
}\mathrm{meas}\left\{  {|\nabla U|>k}\right\}  \leq{C_{2}M_{2}}{k^{-m_{c}},}
\label{mess}%
\end{equation}%
\begin{equation}
\mathrm{meas}\left\{  {|u|>k}\right\}  \leq{C_{2}M_{2}}{k^{-p_{c}},\qquad
}\mathrm{meas}\left\{  {|\nabla u|>k}\right\}  \leq{C_{2}M_{2}}{k^{-m_{c}},}
\label{mess2}%
\end{equation}
where $C_{2}=C_{2}(N,p,\Lambda_{1},\Lambda_{2}),$ and ${M_{1}}={\left(
M{+|\Omega|}\right)  ^{\frac{p}{N}}}M$ and ${M_{2}}=M_{1}+M.$
\end{proposition}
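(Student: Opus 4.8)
The plan is to obtain the two energy estimates (\ref{alp}) and (\ref{gam}) by testing the renormalized equation against carefully chosen truncations, and then to deduce the Marcinkiewicz estimates (\ref{mess}), (\ref{mess2}) by a standard interpolation argument. First I would prove (\ref{alp}): fix $\ell\ge 0$, $k\ge 1$, and apply Lemma \ref{integ} with $S$ a smooth truncation (so that $S(U)=U$ on the relevant range and $S'$ has compact support containing $[-\ell-k,\ell+k]$), $\xi\equiv 1$, and $J$ the odd function equal to $\mathrm{sign}(r)\,T_k(|r|-\ell)_+$; concretely $J(r)$ vanishes on $[-\ell,\ell]$, increases linearly to $\pm k$ on $\ell\le|r|\le\ell+k$, and is constant beyond. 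Then $J(S(U))=J(U)$, its gradient is supported in $\{\ell\le|U|\le\ell+k\}$, and the ellipticity condition (\ref{condi1}) gives $\Lambda_1\int_{\ell\le|U|\le\ell+k}|\nabla U|^p$ from the left-hand side. The time term produces $-\int_\Omega \overline J(U_0)\,dx \ge -kM$ up to constants since $\overline J$ grows at most linearly with slope $k$ and $|U_0|\le|u_0|+|h(0)|$ is controlled (this is where the $\|u_0\|_{1,\Omega}$ and $\|h\|_X^p$ terms of $M$ enter, using $h\in X$ and the embedding). The right-hand side $\int_Q S'(U)J(U)\,d\widehat{\mu_0} = \int_Q fJ(U) + \int_Q g\cdot\nabla(J(U))$ is bounded by $k\|f\|_{1,Q} + \|g\|_{p',Q}\big(\int_{\ell\le|U|\le\ell+k}|\nabla U|^p\big)^{1/p}$; the second piece is absorbed into the left-hand side by Young's inequality, generating the $\|g\|_{p',Q}^{p'}$ contribution. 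Finally $\nabla u = \nabla U + \nabla h$ on $\{|U|\le\ell+k\}$, so $\int|\nabla u|^p$ over that set is controlled by $\int|\nabla U|^p + \|h\|_X^p$, and using (\ref{condi1}) to bound $A(x,t,\nabla u)\cdot\nabla U$ from below by $\Lambda_1|\nabla u|^p$ minus lower-order terms involving $a\in L^{p'}$ gives the $\|a\|_{p',Q}^{p'}$ term. Summing these estimates yields (\ref{alp}).

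Next I would establish (\ref{gam}). Here I take in Lemma \ref{integ} the function $J=T_1$ (or rather a smooth approximation) together with $S$ a truncation at a large level, and a test function $\xi$ independent of $x$: choosing $\xi(t)$ so as to localize near an arbitrary time $\tau\in(0,T)$ — e.g. $\xi = \chi_{[0,\tau]}$ approximated by smooth functions with $\xi_t\le 0$ — one gets $\int_\Omega \overline{T_1}(U(\tau))\,dx$ bounded by the same right-hand side as above plus $\int_\Omega \overline{T_1}(U_0)\,dx$. Since $\overline{T_1}(r)\ge |r|-1$ for all $r$, this controls $\int_\Omega|U(\tau)|\,dx \le C(M+|\Omega|)$ uniformly in $\tau$, which is (\ref{gam}). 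The singular part $\mu_s$ enters via the renormalization conditions (\ref{renor2})--(\ref{renor3}): when we pass to $S=S_m$ and let $m\to\infty$, the limit of $\frac1m\int_{\{m\le U<2m\}}\xi A\cdot\nabla U$ contributes $\int_Q\xi\,d\mu_s^+\le|\mu_s|(Q)$, and symmetrically for $\mu_s^-$; this is exactly why $|\mu_s|(Q)$ appears in $M$. I would handle this by first proving the estimates with $S=S_m$, keeping track of the extra term $\frac1m\int_{\{m\le|U|<2m\}}A(x,t,\nabla u)\cdot\nabla U$, which is nonnegative by ellipticity and converges to $\int d\mu_s^\pm$, then passing to the limit.

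For the measure estimates (\ref{mess})--(\ref{mess2}), I would argue as in \cite{BBGGPV,BDGO97}. From (\ref{gam}) and Chebyshev, $\mathrm{meas}\{|U|>k\}\le k^{-1}\|U\|_{L^\infty((0,T);L^1(\Omega))}$, but to get the sharp exponent $p_c$ one interpolates: using the Sobolev embedding in space on $T_k(U)$ together with the $L^\infty_t L^1_x$ bound, a Gagliardo--Nirenberg-type inequality gives $\|T_k(U)\|_{p_c,Q}^{p_c}\le C\big(\int_Q|\nabla T_k(U)|^p\big)\big(\sup_t\|U(t)\|_{1,\Omega}\big)^{p/N}\le C\,(kM)(M+|\Omega|)^{p/N} = C M_1 k$, whence $k^{p_c}\,\mathrm{meas}\{|U|>k\}\le\|T_k(U)\|_{p_c,Q}^{p_c}\le CM_1k$, giving the first bound in (\ref{mess}). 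For the gradient, one uses $\mathrm{meas}\{|\nabla U|>k\}\le\mathrm{meas}\{|U|>\lambda\}+\mathrm{meas}\{|\nabla U|>k,\ |U|\le\lambda\}\le C M_1\lambda^{-p_c} + \lambda^{-1}\cdot k^{-p}\int_{|U|\le\lambda}|\nabla U|^p\le CM_1\lambda^{-p_c}+CM\lambda k^{-p}$ (using (\ref{alp}) with $\ell=0$), and optimizing in $\lambda$ yields the exponent $m_c$ with constant $M_2=M_1+M$. Finally (\ref{mess2}) for $u$ rather than $U$ follows by writing $u=U+h$, noting $h\in X$ so $|\nabla h|\in L^p(Q)$ and $h\in L^{p_c}(Q)$ by embedding (with norms $\lesssim M$), and using $\mathrm{meas}\{|u|>2k\}\le\mathrm{meas}\{|U|>k\}+\mathrm{meas}\{|h|>k\}$ plus Chebyshev on $h$. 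The main obstacle I expect is the rigorous justification of the test-function choices — Lemma \ref{integ} only delivers an \emph{inequality} and requires $S$ with compactly supported derivative — so the estimates (\ref{alp}), (\ref{gam}) must first be proved with $S=S_m$ and then passed to the limit $m\to\infty$, with the renormalization conditions (\ref{renor2})--(\ref{renor3}) accounting for the singular measure; keeping the dependence of the constants solely on $p,\Lambda_1,\Lambda_2$ (and $N$ in the second statement) through all these limits is the delicate bookkeeping.
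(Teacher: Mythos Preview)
Your proposal is correct and follows essentially the same route as the paper: the paper uses exactly the test triple $(J,S,\xi)=(T_{k,\ell},\overline{H_m},\xi)$ in Lemma \ref{integ}, lets $m\to\infty$ so that the $S''$-term produces $k|\mu_s|(Q)$ via (\ref{renor2})--(\ref{renor3}), then takes $k=1,\ell=0$ with $\overline{T_1}(r)\ge|r|-1$ for (\ref{gam}), and finally applies the parabolic Gagliardo--Nirenberg inequality $\int_Q|T_k(U)|^{p(N+1)/N}\le c\,\|U\|_{L^\infty_tL^1_x}^{p/N}\int_Q|\nabla T_k(U)|^p$ together with the level-set splitting (with $\lambda=k^{N/(N+1)}$) to get (\ref{mess}). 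Two small slips in your write-up: the initial-data term in Lemma \ref{integ} involves $S(u_0)$ directly (no $h(0)$ needed), and in your gradient splitting the factor $\lambda^{-1}$ is spurious---the correct bound is simply $k^{-p}\int_{|U|\le\lambda}|\nabla U|^p\le CM\lambda k^{-p}$---but your final expression and optimization are right.
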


\begin{proof}
Set for any $r\in\mathbb{R}$, and $m,k,\ell>0,$
\[
T_{k,\ell}(r)=\max\{\min\{r-\ell,k\},0\}+\min\{\max\{r+\ell,-k\},0\}.
\]
For $m>k+\ell$, we can choose $(J,S,\xi)=(T_{k,\ell},\overline{H_{m}},\xi)$ as
test functions in (\ref{partu}), where $\overline{H_{m}}$ is defined at
(\ref{Hm}) and $\xi\in C^{1}([0,T])$ with values in $[0,1]$, independent on
$x$. Since $T_{k,\ell}(\overline{H_{m}}(r))=T_{k,\ell}(r)$ for all
$r\in\mathbb{R},$ we obtain
\[%
\begin{array}
[c]{l}%
-\int_{\Omega}{\xi(0){T_{k,\ell}}({u_{0}})\overline{H_{m}}({u_{0}})dx}%
-\int_{Q}{{\xi_{t}}}\overline{T_{k,\ell}}{(\overline{H_{m}}(U))}\\
\\
+\int\limits_{\left\{  {\ell}\leq{|U|}<{\ell+k}\right\}  }{\xi A(x,t,\nabla
u).\nabla U}-\frac{k}{m}\int\limits_{\left\{  m\leq|U|<2m\right\}  }{\xi
A(x,t,\nabla u).\nabla U}\leq\int_{Q}H_{m}{(U)\xi{T_{k,\ell}}%
(U)d{{\widehat{\mu_{0}}}.}}%
\end{array}
\]
And
\[
\int_{Q}H_{m}{(U)\xi{T_{k,\ell}}(U)d{{\widehat{\mu_{0}}}}\;}{=}\int_{Q}%
H_{m}{(U)\xi{T_{k,\ell}}(U)}f{{{+}}}\int\limits_{\left\{  {\ell}\leq
{|U|}<{\ell+k}\right\}  }{\xi\nabla U.g-}\frac{k}{m}\int\limits_{\left\{
m\leq|U|<2m\right\}  }{\xi\nabla U.g.}%
\]
Let $m\rightarrow\infty$; then, for any $k\geq1,$ since $U\in L^{1}(Q)$ and
from (\ref{renor2}), (\ref{renor3}), and (\ref{lim3}), we find%
\begin{equation}
-\int_{Q}{{\xi_{t}}\overline{T_{k,\ell}}(U)}+\int\limits_{\left\{  {\ell}%
\leq{|U|}<{\ell+k}\right\}  }{\xi A(x,t,\nabla u).\nabla U\;\leq}%
\int\limits_{\left\{  {\ell}\leq{|U|}<{\ell+k}\right\}  }{\xi\nabla
U.g}+k({{{\left\Vert {{u_{0}}}\right\Vert }_{1,\Omega}+}}\left\vert {\mu_{s}%
}\right\vert (Q){+}\left\Vert f\right\Vert _{1,Q}).\label{epsa}%
\end{equation}
Next, we take $\xi\equiv1$. We verify that
\[
{A(x,t,\nabla u).\nabla U-\nabla U.g\geq}\frac{\Lambda_{1}}{4}(|\nabla
u|^{p}+|\nabla U|^{p})-c_{1}(\left\vert g\right\vert ^{p^{\prime}}+|\nabla
h|^{p}+|a|^{p^{\prime}})
\]
for some $c_{1}=c_{1}(p,\Lambda_{1},\Lambda_{2})>0$. Hence (\ref{alp})
follows. Thus, from (\ref{epsa}) and the H\"{o}lder inequality, we get, for
any $\xi\in C^{1}([0,T])$ with values in $[0,1],$
\[
-\int_{Q}{{\xi_{t}}\overline{T_{k,\ell}}(U)}\leq c_{2}kM
\]
for some $c_{2}=c_{2}(p,\Lambda_{1},\Lambda_{2})>0.$Thus $\int_{\Omega
}\overline{T_{k,\ell}}{(}${$U$}${)(t)dx}\leq c_{2}kM,$ for $a.e.$ $t\in(0,T).$
We deduce (\ref{gam}) by taking $k=1,\ell=0$, since $\overline{T_{1,0}%
}(r)=\overline{T_{1}}(r)$ $\geq|r|-1,$ for any $r\in\mathbb{R}.$ \medskip

\noindent Next, from the Gagliardo-Nirenberg embedding Theorem, see
\cite[Proposition 3.1]{DiBe}, we have
\[
\int_{Q}{{{\left\vert {{T_{k}}(U)}\right\vert }^{\frac{{p(N+1)}}{N}}}}\leq
c_{3}\left\Vert U\right\Vert _{{L^{\infty}}(((0,T));{L^{1}}(\Omega))}%
^{\frac{p}{N}}\int_{Q}{{{\left\vert {\nabla{T_{k}}(U)}\right\vert }^{p}},}%
\]
where $c_{3}=c_{3}(N,p).$ Then, from (\ref{alp}) and (\ref{gam}), we get, for
any $k$ $\geq1,$
\[
\mathrm{meas}\left\{  {|U|>k}\right\}  \leq{k^{-\frac{{p(N+1)}}{N}}}\int%
_{Q}{{{\left\vert {{T_{k}}(U)}\right\vert }^{\frac{{p(N+1)}}{N}}}}\leq
c_{3}\left\Vert U\right\Vert _{{L^{\infty}}((0,T);{L^{1}}(\Omega))}^{\frac
{p}{N}}{k^{-\frac{{p(N+1)}}{N}}}\int_{Q}{{{\left\vert {\nabla{T_{k}}%
(U)}\right\vert }^{p}}}\leq c_{4}M_{1}{k^{-p_{c}}},\text{ }%
\]
with $c_{4}=c_{4}(N,p,\Lambda_{1},\Lambda_{2})$. We obtain%
\begin{align*}
\mathrm{meas}\left\{  {|\nabla U|>k}\right\}   &  \leq\frac{1}{{{k^{p}}}}%
\int_{0}^{{k^{p}}}{\mathrm{{meas}}}\left(  {\left\{  {|\nabla U{|^{p}}%
>s}\right\}  }\right)  ds\\
&  \leq\mathrm{{meas}}\left\{  {|U|>{k^{\frac{N}{{N+1}}}}}\right\}  +\frac
{1}{{{k^{p}}}}\int_{0}^{{k^{p}}}{\mathrm{{meas}}}\left(  {\left\{  {|\nabla
U{|^{p}}>s,|U|}\leq{{k^{\frac{N}{{N+1}}}}}\right\}  }\right)  ds\\
&  \leq c_{4}{M_{1}}{k^{-m_{c}}}+\frac{1}{{{k^{p}}}}\int\limits_{|U|\leq
{k^{\frac{N}{{N+1}}}}}{{{\left\vert {\nabla U}\right\vert }^{p}}}\leq
c_{5}{M_{2}}{k^{-m_{c}}},
\end{align*}
with $c_{5}=c_{5}(N,p,\Lambda_{1},\Lambda_{2}).$ Furthermore, for any
$k\geq1,$
\[
\mathrm{meas}\left\{  {|h|>k}\right\}  +\mathrm{meas}\left\{  {|\nabla
h|>k}\right\}  \leq c_{6}k^{-p}\left\Vert h\right\Vert _{X}^{p},
\]
where $c_{6}=c_{6}(N,p)$. Therefore, we easily get (\ref{mess2}).
\end{proof}

\begin{remark}
\label{h2} If $\mu\in L^{1}(Q)$ and $a\equiv0$ in (\ref{condi1}), then
(\ref{alp}) holds for all $k>0$ and the term $|\Omega|$ in inequality
(\ref{gam}) can be removed, where $M=||u_{0}||_{1,\Omega}+|\mu|(Q)$.
Furthermore, (\ref{mess2}) is stated as follows:
\begin{equation}
\mathrm{meas}\left\{  {|u|>k}\right\}  \leq C_{2}M^{\frac{p+N}{N}}{k^{-p_{c}%
},\qquad}\mathrm{meas}\left\{  {|\nabla u|>k}\right\}  \leq C_{2}M^{\frac
{N+2}{N+1}}k^{-m_{c}},\forall k>0.\label{1810131}%
\end{equation}
with $C_{2}=C_{2}(N,p,\Lambda_{1},\Lambda_{2}).$To see last inequality, we do
in the following way:
\begin{align*}
\mathrm{meas}\left\{  {|\nabla U|>k}\right\}   &  \leq\mathrm{{meas}}\left\{
{|U|>M^{\frac{1}{N+1}}k^{\frac{N}{{N+1}}}}\right\}  +\frac{1}{{{k^{p}}}}%
\int_{0}^{{k^{p}}}{\mathrm{{meas}}\left\{  |\nabla U{|^{p}}>s,|U|\leq
M^{\frac{1}{N+1}}k^{\frac{N}{{N+1}}}\right\}  }ds\\
&  \leq C_{2}M^{\frac{N+2}{N+1}}k^{-m_{c}}.
\end{align*}

\end{remark}

\begin{proposition}
\label{mun} Let $\{\mu_{n}\}$ $\subset$ $\mathcal{M}_{b}(${$Q$}$),$ and
$\{u_{0,n}\}\subset L^{1}(\Omega),$ such that
\[
\sup_{n}\left\vert {{\mu_{n}}}\right\vert ({Q})<\infty,\text{ and }\sup
_{n}||{{u_{0,n}}}||_{1,\Omega}<\infty.
\]
Let $u_{n}$ be a R-solution of (\ref{pmu}) with data $\mu_{n}=\mu_{n,0}%
+\mu_{n,s}$ and $u_{0,n},$ relative to a decomposition $(f_{n},g_{n},h_{n})$
of $\mu_{n,0}$, and $U_{n}=u_{n}-h_{n}.$ Assume that $\{f_{n}\}$ is bounded in
$L^{1}(Q)$, $\{g_{n}\}$ bounded in $(L^{p^{\prime}}(Q))^{N}$ and $\{h_{n}\}$
bounded in $X$. \medskip

\noindent Then, up to a subsequence, $\{U_{n}\}$ converges $a.e.$ to a
function $U\in{L^{\infty}}((0,T);{L^{1}}(\Omega)),$ such that $T_{k}(U)\in X$
for any $k>0$and $U\in L^{\sigma}((0,T);W_{0}^{1,\sigma}(\Omega))$ for any
$\sigma\in\lbrack1,m_{c}).$ And\medskip

\noindent{(i)} $\left\{  U_{n}\right\}  $ converges to $U$ strongly in
$L^{\sigma}(Q)$ for any $\sigma\in\lbrack1,m_{c}),$ and $\sup{\left\Vert
{{U_{n}}}\right\Vert _{{L^{\infty}}((0,T);{L^{1}}(\Omega))}}<\infty,$\medskip

\noindent{(ii)} $\sup_{k>0}\sup_{n}\frac{1}{k+1}\int_{Q}|\nabla T_{k}%
(U_{n})|^{p}<\infty$,\medskip

\noindent{(iii)} $\left\{  T_{k}(U_{n})\right\}  $ converges to $T_{k}(U)$
weakly in $X,$ for any $k>0$,\medskip

\noindent{(iv)} $\left\{  A\left(  x,t,\nabla\left(  T_{k}(U_{n}%
)+h_{n}\right)  \right)  \right\}  $ converges to some $F_{k}$ weakly in
$(L^{p^{\prime}}(Q))^{N}$. \medskip
\end{proposition}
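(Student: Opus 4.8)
The plan is to establish Proposition~\ref{mun} by combining the a priori estimates of Proposition~\ref{estsup} with standard compactness arguments, adapted to the parabolic setting. First I would observe that since $\{f_n\}$ is bounded in $L^1(Q)$, $\{g_n\}$ in $(L^{p'}(Q))^N$, $\{h_n\}$ in $X$, and $\sup_n(|\mu_n|(Q)+\|u_{0,n}\|_{1,\Omega})<\infty$, the quantity $M_n$ built from the data of $u_n$ (as in Proposition~\ref{estsup}) is bounded uniformly in $n$. Hence $\{U_n\}$ is bounded in $L^\infty((0,T);L^1(\Omega))$, which gives the last part of (i); and (\ref{alp}) gives (ii), since $\int_Q|\nabla T_k(U_n)|^p\le C_1(k+1)M_n$. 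From (\ref{mess}) we also get uniform bounds $\mathrm{meas}\{|U_n|>k\}\le C_2 M_{1,n}k^{-p_c}$ and $\mathrm{meas}\{|\nabla U_n|>k\}\le C_2 M_{2,n}k^{-m_c}$, so $\{U_n\}$ is bounded in $L^\sigma((0,T);W_0^{1,\sigma}(\Omega))$ and in the Marcinkiewicz spaces $L^{p_c,\infty}(Q)$, $|\nabla U_n|\in L^{m_c,\infty}(Q)$, for every $\sigma\in[1,m_c)$.

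Next I would extract the almost everywhere convergent subsequence. The natural route is to use (\ref{dpri}) with $S=T_k$-type regularizations: for fixed $k$, the function $S(U_n)$ with $S'$ supported in $[-k,k]$ satisfies an equation whose right-hand side is bounded in $X'+L^1(Q)$ by (\ref{11051}), uniformly in $n$ (using the uniform bound on $\int_Q|\nabla u_n|^p\chi_{|U_n|\le k}$ from (\ref{alp}), recalling $\nabla u_n=\nabla U_n+\nabla h_n$ and the bound on $\{h_n\}$ in $X$). Since $\{S(U_n)\}$ is bounded in $X$ and $\{(S(U_n))_t\}$ is bounded in $X'+L^1(Q)$, a parabolic compactness lemma of Aubin--Simon type (e.g.\ as in \cite{BDGO97,DrPoPr}) yields a subsequence along which $\{S(U_n)\}$ converges strongly in $L^1(Q)$ and a.e.\ in $Q$. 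A diagonal argument over $k\in\mathbb{N}$, together with the uniform decay $\mathrm{meas}\{|U_n|>k\}\le C_2 M_{1,n}k^{-p_c}$ (so no mass escapes to infinity), promotes this to a.e.\ convergence of $\{U_n\}$ itself to a measurable function $U$. The uniform integrability coming from the Marcinkiewicz bound and Vitali's theorem then upgrades a.e.\ convergence to strong convergence in $L^\sigma(Q)$ for every $\sigma\in[1,m_c)$, giving (i); and $U\in L^\infty((0,T);L^1(\Omega))$, $U\in L^\sigma((0,T);W_0^{1,\sigma}(\Omega))$, $T_k(U)\in X$ follow by lower semicontinuity of the norms under the established bounds.

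For (iii), since $\{T_k(U_n)\}$ is bounded in $X$ by (ii) and converges a.e.\ to $T_k(U)$ (because $U_n\to U$ a.e.\ and $T_k$ is continuous), weak convergence in $X$ follows from the identification of the weak limit with the a.e.\ limit. For (iv), the growth bound in (\ref{condi1}) gives $|A(x,t,\nabla(T_k(U_n)+h_n))|\le a(x,t)+\Lambda_2|\nabla T_k(U_n)+\nabla h_n|^{p-1}$, and the right-hand side is bounded in $L^{p'}(Q)$ uniformly in $n$ since $\{\nabla T_k(U_n)\}$ is bounded in $(L^p(Q))^N$ and $\{\nabla h_n\}$ in $(L^p(Q))^N$; reflexivity of $L^{p'}(Q)$ then yields a weakly convergent subsequence with some limit $F_k$, and a further diagonal extraction over $k$ makes the subsequence independent of $k$.

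The main obstacle I anticipate is the a.e.\ convergence step: one must be careful that the compactness is applied to truncated quantities $S(U_n)$ rather than to $U_n$ directly (whose time derivative is only a measure, not in $X'+L^1(Q)$), and then carefully pass from a.e.\ convergence of all truncations to a.e.\ convergence of $U_n$ using the uniform measure estimate on $\{|U_n|>k\}$ to rule out concentration at infinity. A secondary technical point is verifying that the bound (\ref{11051}) on $\|(S(U_n))_t\|_{X'+L^1(Q)}$ is genuinely uniform in $n$: this requires the uniform control of $\|\,|\nabla u_n|^p\chi_{|U_n|\le k}\|_{1,Q}$, which is exactly what (\ref{alp}) provides once $M_n$ is known to be bounded, together with the assumed boundedness of $\{f_n\}$, $\{g_n\}$, $\{h_n\}$, and $\|a\|_{p',Q}$.
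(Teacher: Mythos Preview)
Your proposal is correct and follows essentially the same route as the paper: uniform bounds from Proposition~\ref{estsup}, then compactness of $\{S(U_n)\}$ in $L^1(Q)$ via the bound (\ref{11051}) on $(S(U_n))_t$ in $X'+L^1(Q)$, then passage to a.e.\ convergence of $U_n$ using the uniform tail estimate, and finally weak compactness for (iii) and (iv). The only cosmetic difference is that where you invoke a diagonal argument over $k$ to promote a.e.\ convergence of the truncations to a.e.\ convergence of $U_n$, the paper phrases this as a Cauchy-in-measure argument (bounding $\mathrm{meas}\{|U_n-U_m|>\sigma\}$ by $\mathrm{meas}\{|U_n|>k\}+\mathrm{meas}\{|U_m|>k\}+\mathrm{meas}\{|S_k(U_n)-S_k(U_m)|>\sigma\}$); the two are equivalent.
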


\begin{proof}
Take $S\in W^{2,\infty}(\mathbb{R})$ such that $S^{\prime}$ has compact
support on $\mathbb{R}$ and $S(0)=0$. We combine (\ref{11051}) with
(\ref{alp}), and deduce that $\{S(U_{n})_{t}\}$ is bounded in ${X}^{\prime
}+{L^{1}}(${$Q$}$)$ and $\{S(U_{n})\}$ bounded in $X$. Hence, $\{S(U_{n})\}$
is relatively compact in $L^{1}(Q)$. On the other hand, we choose $S=S_{k}$
such that $S_{k}(z)=z,$ if $\left\vert z\right\vert <k$ and $S(z)=2k\;$%
sign$z,$ if $|z|>2k.$ From (\ref{gam}), we obtain%
\begin{align}
\mathrm{meas}\left\{  {\left\vert {{U_{n}}-{U_{m}}}\right\vert >\sigma
}\right\}   &  \leq\mathrm{meas}\left\{  {\left\vert {{U_{n}}}\right\vert
>k}\right\}  +\mathrm{meas}\left\{  {\left\vert {{U_{m}}}\right\vert
>k}\right\}  +\mathrm{meas}\left\{  {\left\vert {{S_{k}}({U_{n}})-{S_{k}%
}({U_{m}})}\right\vert >\sigma}\right\}  \nonumber\\
&  \leq\frac{c}{k}+\mathrm{meas}\left\{  {\left\vert {{S_{k}}({U_{n}})-{S_{k}%
}({U_{m}})}\right\vert >\sigma}\right\}  ,\nonumber
\end{align}
where $c$ does not depend of $n,m.$ Thus, up to a subsequence $\{u_{n}\}$ is a
Cauchy sequence in measure, and converges $a.e.$ in $Q$ to a function $u$.
Thus, $\left\{  T_{k}(U_{n})\right\}  $ converges to $T_{k}(U)$ weakly in $X$,
since $\sup_{n}{\left\Vert {{T_{k}}({U_{n}})}\right\Vert _{X}}<\infty$ for any
$k>0$. And $\left\{  |\nabla\left(  T_{k}(U_{n})+h_{n}\right)  |^{p-2}%
\nabla\left(  T_{k}(U_{n})+h_{n}\right)  \right\}  $ converges to some $F_{k}$
weakly in $(L^{p^{\prime}}(Q))^{N}$. Furthermore, from (\ref{mess}), $\left\{
U_{n}\right\}  $ strongly converges to $U$ in $L^{\sigma}(Q),$ for any
$\sigma<p_{c}.$
\end{proof}

\section{The convergence theorem\label{cv}}

We first recall some properties of the measures, see \cite[Lemma 5]{Pe08},
\cite{DMOP}.

\begin{proposition}
\label{04041} Let $\mu_{s}=\mu_{s}^{+}-\mu_{s}^{-}\in\mathcal{M}_{b}(Q),$
where $\mu_{s}^{+}$ and $\mu_{s}^{-}$ are concentrated, respectively, on two
disjoint sets $E^{+}$ and $E^{-}$ of zero $c_{p}^{Q}$-capacity. Then, for any
$\delta>0$, there exist two compact sets $K_{\delta}^{+}\subseteq E^{+}$ and
$K_{\delta}^{-}\subseteq E^{-}$ such that
\[
\mu_{s}^{+}(E^{+}\backslash K_{\delta}^{+})\leq\delta,\text{\qquad}\mu_{s}%
^{-}(E^{-}\backslash K_{\delta}^{-})\leq\delta,
\]
and there exist $\psi_{\delta}^{+},\psi_{\delta}^{-}\in C_{c}^{1}(Q)$ with
values in $\left[  0,1\right]  ,$ such that $\psi_{\delta}^{+},\psi_{\delta
}^{-}=1$ respectively on $K_{\delta}^{+},K_{\delta}^{-},$ and $\text{supp}%
(\psi_{\delta}^{+})\cap\text{supp}(\psi_{\delta}^{-})=\emptyset$, and
\[
||\psi_{\delta}^{+}||_{X}+||(\psi_{\delta}^{+})_{t}||_{X^{\prime}+L^{1}%
(Q)}\leq\delta,\qquad||\psi_{\delta}^{-}||_{X}+||(\psi_{\delta}^{-}%
)_{t}||_{X^{\prime}+L^{1}(Q)}\leq\delta.
\]
There exist decompositions $(\psi_{\delta}^{+})_{t}={\left(  {\psi_{\delta
}^{+}}\right)  _{t}^{1}+\left(  {\psi_{\delta}^{+}}\right)  _{t}^{2}}$ and
$(\psi_{\delta}^{-})_{t}={\left(  {\psi_{\delta}^{-}}\right)  _{t}^{1}+\left(
{\psi_{\delta}^{-}}\right)  _{t}^{2}}$ in $X^{\prime}+L^{1}(Q),$ such that
\begin{equation}
{\left\Vert {\left(  {\psi_{\delta}^{+}}\right)  _{t}^{1}}\right\Vert
_{{X}^{\prime}}}\leq\frac{\delta}{3},\qquad{\left\Vert {\left(  {\psi_{\delta
}^{+}}\right)  _{t}^{2}}\right\Vert _{1,Q}}\leq\frac{\delta}{3},\qquad
{\left\Vert {\left(  {\psi_{\delta}^{-}}\right)  _{t}^{1}}\right\Vert
_{{X}^{\prime}}}\leq\frac{\delta}{3},\qquad{\left\Vert {\left(  {\psi_{\delta
}^{-}}\right)  _{t}^{2}}\right\Vert _{1,Q}}\leq\frac{\delta}{3}.\label{41}%
\end{equation}
Both $\left\{  \psi_{\delta}^{+}\right\}  $ and $\left\{  \psi_{\delta}%
^{-}\right\}  $ converge to $0$, weak-$^{\ast}$ in $L^{\infty}(Q)$, and
strongly in $L^{1}(Q)$ and up to subsequences, $a.e.$ in $Q,$ as $\delta$
tends to $0$.

\noindent Moreover if $\rho_{n}$ and $\eta_{n}$ are as in Theorem \ref{sta},
we have, for any $\delta,\delta_{1},\delta_{2}>0,$
\begin{equation}
\int_{Q}{\psi_{\delta}^{-}}d\rho_{n}+\int_{Q}{\psi_{\delta}^{+}}d\eta
_{n}=\omega(n,\delta),\qquad\int_{Q}{\psi_{\delta}^{-}}d\mu_{s}^{+}\leq
\delta,\qquad\int_{Q}{\psi_{\delta}^{+}}d\mu_{s}^{-}\leq\delta, \label{12054}%
\end{equation}%
\begin{equation}
\int_{Q}{(1-\psi_{\delta_{1}}^{+}\psi_{\delta_{2}}^{+})}d\rho_{n}%
=\omega(n,\delta_{1},\delta_{2}),\qquad\int_{Q}{(1-\psi_{\delta_{1}}^{+}%
\psi_{\delta_{2}}^{+})}d\mu_{s}^{+}\leq\delta_{1}+\delta_{2}, \label{12056}%
\end{equation}%
\begin{equation}
\int_{Q}{(1-\psi_{\delta_{1}}^{-}\psi_{\delta_{2}}^{-})}d\eta_{n}%
=\omega(n,\delta_{1},\delta_{2}),\qquad\int_{Q}{(1-\psi_{\delta_{1}}^{-}%
\psi_{\delta_{2}}^{-})}d\mu_{s}^{-}\leq\delta_{1}+\delta_{2}. \label{12057}%
\end{equation}

\end{proposition}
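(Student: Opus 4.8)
The plan is to construct the compact sets and the cut-off functions explicitly, from inner/outer regularity of the measures together with the definition of the capacity $c_p^Q$, and then to deduce the convergences and the estimates \eqref{12054}--\eqref{12057} from the narrow convergence of $\{\rho_n\}$ and $\{\eta_n\}$. First, fixing $\delta>0$, I would use inner regularity of the bounded Radon measures $\mu_s^+,\mu_s^-$ to obtain compact sets $K_\delta^+\subseteq E^+$, $K_\delta^-\subseteq E^-$ with $\mu_s^+(E^+\setminus K_\delta^+)\le\delta$ and $\mu_s^-(E^-\setminus K_\delta^-)\le\delta$. These are compact and disjoint, so $d:=\mathrm{dist}(K_\delta^+,K_\delta^-)>0$. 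Since $\mu_s^-$ is concentrated on $E^-$ and $K_\delta^+\subseteq E^+\subseteq Q\setminus E^-$, we have $\mu_s^-(K_\delta^+)=0$, so by outer regularity of $\mu_s^-$ I would pick an open set $V^+$ with $K_\delta^+\subseteq V^+$, $\overline{V^+}$ a compact subset of $Q$, $V^+$ inside the $\tfrac{d}{3}$-neighbourhood of $K_\delta^+$, and $\mu_s^-(V^+)\le\delta$; symmetrically I would pick $V^-\supseteq K_\delta^-$ with $\overline{V^-}$ compact in $Q$, $V^-$ inside the $\tfrac{d}{3}$-neighbourhood of $K_\delta^-$, and $\mu_s^+(V^-)\le\delta$. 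Then $V^+\cap V^-=\emptyset$.

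Next I would build $\psi_\delta^+$; the function $\psi_\delta^-$ is obtained in the same way with $+$ and $-$ exchanged. Since $c_p^Q(E^+)=0$ and $K_\delta^+\subseteq E^+$, also $c_p^Q(K_\delta^+)=0$, so for each $\varepsilon>0$ there are an open $U\supseteq K_\delta^+$ and $w\in W$ with $w\ge\chi_U$ a.e. and $\|w\|_W\le\varepsilon$; by the standard truncation property of this capacity (see \cite{DrPoPr} and \cite[Lemma 5]{Pe08}) I may moreover assume $0\le w\le1$. Fixing $\theta\in C_c^\infty(V^+)$ with $\theta\equiv1$ on a neighbourhood of $K_\delta^+$ and mollifying in space--time at a small scale $\sigma$, I set $\psi_\delta^+:=(\theta w)\ast\varrho_\sigma$. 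For $\sigma$ small this lies in $C_c^\infty(Q)\subseteq C_c^1(Q)$, has values in $[0,1]$, equals $1$ on $K_\delta^+$ and is supported in $V^+$; and from $(\theta w)_t=\theta\,w_t+\theta_t\,w$ I obtain the decomposition $(\psi_\delta^+)_t=(\psi_\delta^+)_t^1+(\psi_\delta^+)_t^2$ with $(\psi_\delta^+)_t^1:=(\theta\,w_t)\ast\varrho_\sigma$ and $(\psi_\delta^+)_t^2:=(\theta_t\,w)\ast\varrho_\sigma$. Here multiplication by the fixed smooth $\theta$ is bounded on $X'$, so $\|\theta\,w_t\|_{X'}\le C(\theta)\|w_t\|_{X'}$, while $\theta_t\,w\in L^p(Q)\subseteq L^1(Q)$ with $\|\theta_t\,w\|_{1,Q}\le C(\theta,Q)\|w\|_X$ by Poincaré and boundedness of $Q$, and $\|\psi_\delta^+\|_X\le C(\theta)\|w\|_X$ up to an error vanishing with $\sigma$. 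Choosing first $\varepsilon$ small, then $\sigma$ small, makes $\|\psi_\delta^+\|_X$, $\|(\psi_\delta^+)_t^1\|_{X'}$ and $\|(\psi_\delta^+)_t^2\|_{1,Q}$ all $\le\tfrac{\delta}{3}$, which gives \eqref{41} and $\|\psi_\delta^+\|_X+\|(\psi_\delta^+)_t\|_{X'+L^1(Q)}\le\delta$. Finally $\mathrm{supp}(\psi_\delta^+)\cap\mathrm{supp}(\psi_\delta^-)\subseteq V^+\cap V^-=\emptyset$.

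The remaining assertions are then soft. As $\delta\to0$, $\|\psi_\delta^\pm\|_X\to0$, so $\psi_\delta^\pm\to0$ in $L^p(Q)$ by Poincaré, hence in $L^1(Q)$ since $Q$ is bounded; since $0\le\psi_\delta^\pm\le1$ this also gives convergence to $0$ weak-$\ast$ in $L^\infty(Q)$ and, along a subsequence, $a.e.$ in $Q$. For \eqref{12054}--\eqref{12057} I would use that $\psi_\delta^\pm$ and $1-\psi_{\delta_1}^\pm\psi_{\delta_2}^\pm$ all belong to $C(Q)\cap L^\infty(Q)$, so the narrow convergence of $\{\rho_n\}$ to $\mu_s^+$ and of $\{\eta_n\}$ to $\mu_s^-$ yields $\int_Q\phi\,d\rho_n\to\int_Q\phi\,d\mu_s^+$ and $\int_Q\phi\,d\eta_n\to\int_Q\phi\,d\mu_s^-$ for each such $\phi$. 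Since $0\le\psi_\delta^-\le\chi_{V^-}$ with $\mu_s^+(V^-)\le\delta$, one has $\int_Q\psi_\delta^-\,d\mu_s^+\le\delta$, and symmetrically $\int_Q\psi_\delta^+\,d\mu_s^-\le\delta$; together with the narrow limits this gives \eqref{12054}. Likewise $\psi_{\delta_1}^+\psi_{\delta_2}^+\equiv1$ on $K_{\delta_1}^+\cap K_{\delta_2}^+$ and $0\le1-\psi_{\delta_1}^+\psi_{\delta_2}^+\le1$, so, $\mu_s^+$ being concentrated on $E^+$, $\int_Q(1-\psi_{\delta_1}^+\psi_{\delta_2}^+)\,d\mu_s^+\le\mu_s^+(E^+\setminus K_{\delta_1}^+)+\mu_s^+(E^+\setminus K_{\delta_2}^+)\le\delta_1+\delta_2$; with the narrow limit this gives \eqref{12056}, and \eqref{12057} is identical with the minus data.

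The genuinely delicate step is the second paragraph: passing from the abstract equality $c_p^Q(K_\delta^+)=0$ to an explicit $C_c^1$ function that equals $1$ on $K_\delta^+$, has prescribed small compact support, and whose time derivative splits into a small $X'$-part and a small $L^1$-part. This last requirement --- precisely what the decomposition \eqref{41} records --- is unavoidable once $p<2$, since then $\theta_t\,w$ need not belong to $X'$ at all, and this is why the $L^1$ correction is built into the statement. Everything else (the regularity of Radon measures and the passage to the limit through the narrow topology) is routine.
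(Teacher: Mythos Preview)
The paper does not supply its own proof of this proposition; it is stated with a reference to \cite[Lemma~5]{Pe08} (the parabolic version) and to \cite{DMOP} (the elliptic prototype), where the construction is carried out. Your argument is correct and is precisely the one in those references: inner regularity yields the compacts $K_\delta^\pm$, the vanishing of $c_p^Q(K_\delta^\pm)$ produces a competitor $w\in W$ with small $W$-norm that one truncates to $[0,1]$, localizes by a smooth $\theta$, and mollifies into the required $C_c^1$ cut-off with the splitting \eqref{41} of its time derivative, and \eqref{12054}--\eqref{12057} then follow from the narrow convergence of $\rho_n,\eta_n$.
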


Hereafter, if $n,\varepsilon,...,\nu$ are real numbers, and a function $\phi$
depends on $n,\varepsilon,...,\nu$ and eventual other parameters $\alpha
,\beta,..,\gamma$, and $n\rightarrow n_{0},\varepsilon\rightarrow
\varepsilon_{0},..,$ $\nu\rightarrow\nu_{0}$, we write $\phi=\omega
(n,\varepsilon,..,\nu)$, then this means that, for fixed $\alpha
,\beta,..,\gamma,$ there holds\textbf{ }$\overline{\lim}_{\nu\rightarrow
{\nu_{0}}}..\overline{\lim}_{\varepsilon\rightarrow{\varepsilon_{0}}}%
\overline{\lim}_{n\rightarrow{n_{0}}}\left\vert \phi\right\vert =0$\textbf{.}
In the same way, $\phi\leq\omega(n,\varepsilon,\delta,...,\nu)$ means
$\overline{\lim}_{\nu\rightarrow{\nu_{0}}}..\overline{\lim}_{\varepsilon
\rightarrow{\varepsilon_{0}}}\overline{\lim}_{n\rightarrow{n_{0}}}\phi\leq0$,
and $\phi$ $\geq\omega(n,\varepsilon,..,\nu)$ means $-\phi\leq\omega
(n,\varepsilon,..,\nu).$

\begin{remark}
\label{05041}In the sequel we recall a convergence property  still used in
\cite{DMOP}: If $\left\{  b_{1,n}\right\}  $ is a sequence in $L^{1}(Q)$
converging to $b_{1}$ weakly in $L^{1}(Q)$ and $\left\{  b_{2,n}\right\}  $ a
bounded sequence in $L^{\infty}(Q)$ converging to $b_{2},$ $a.e.$ in $Q,$ then
$\lim_{n\rightarrow\infty}\int_{Q}{{b_{1,n}b_{2,n}}}=\int_{Q}{{b_{1}b_{2}}.}$
\end{remark}

Next we prove Thorem \ref{sta}.

\begin{proof}
[Scheme of the proof]Let $\{\mu_{n}\},\left\{  u_{0,n}\right\}  $ and
$\left\{  u_{n}\right\}  $ satisfy the assumptions of Theorem \ref{sta}. Then
we can apply Proposition \ref{mun}. Setting $U_{n}=u_{n}-h_{n},$ up to
subsequences, $\left\{  u_{n}\right\}  $ converges $a.e.$ in $Q$ to some
function $u,$ and $\left\{  U_{n}\right\}  $ converges $a.e.$ to $U=u-h,$ such
that $T_{k}(U)\in X$ for any $k>0,$ and $U\in L^{\sigma}((0,T);W_{0}%
^{1,\sigma}(\Omega))\cap{L^{\infty}}((0,T);{L^{1}}(\Omega))$ for every
$\sigma\in\left[  1,m_{c}\right)  $. And $\{U_{n}\}$ satisfies the conclusions
(i) to (iv) of Proposition \ref{mun}. We have
\begin{align*}
\mu_{n} &  =(f_{n}-\operatorname{div}g_{n}+(h_{n})_{t})+(\rho_{n}%
^{1}-\operatorname{div}\rho_{n}^{2})-(\eta_{n}^{1}-\operatorname{div}\eta
_{n}^{2})+\rho_{n,s}-\eta_{n,s}\\
&  =\mu_{n,0}+(\rho_{n,s}-\eta_{n,s})^{+}-(\rho_{n,s}-\eta_{n,s})^{-},
\end{align*}
where
\begin{equation}
\mu_{n,0}=\lambda_{n,0}+\rho_{n,0}-\eta_{n,0},\text{ \quad with }\lambda
_{n,0}=f_{n}-\operatorname{div}g_{n}+(h_{n})_{t},\quad\rho_{n,0}=\rho_{n}%
^{1}-\operatorname{div}\rho_{n}^{2},\quad\eta_{n,0}=\eta_{n}^{1}%
-\operatorname{div}\eta_{n}^{2}.\label{muni}%
\end{equation}
Hence
\begin{equation}
\rho_{n,0},\eta_{n,0}\in\mathcal{M}_{b}^{+}(Q)\cap\mathcal{M}_{0}%
(Q),\text{\quad and\quad}\rho_{n}\geq\rho_{n,0},\quad\eta_{n}\geq\eta
_{n,0}.\label{muno}%
\end{equation}

\noindent Let $E^{+},E^{-}$ be the sets where, respectively, $\mu_{s}^{+}$ and
$\mu_{s}^{-}$ are concentrated. For any $\delta_{1},\delta_{2}>0$, let
$\psi_{\delta_{1}}^{+},\psi_{\delta_{2}}^{+}$ and $\psi_{\delta_{1}}^{-}%
,\psi_{\delta_{2}}^{-}$ as in Proposition \ref{04041} and set
\[
\Phi_{\delta_{1},\delta_{2}}=\psi_{\delta_{1}}^{+}\psi_{\delta_{2}}^{+}%
+\psi_{\delta_{1}}^{-}\psi_{\delta_{2}}^{-}.
\]
\textit{Suppose that we can prove the two estimates, near }$E$
\begin{equation}
I_{1}:=\int\limits_{\left\{  |U_{n}|\leq k\right\}  }{\Phi_{\delta_{1}%
,\delta_{2}}A(x,t,\nabla u_{n}).\nabla\left(  {{U_{n}}-}\langle T_{k}%
(U)\rangle_{\nu}\right)  }\leq\omega(n,\nu,\delta_{1},\delta_{2}),
\label{12059}%
\end{equation}
\textbf{ }\textit{and far from} $E,$%
\begin{equation}
I_{2}:=\int\limits_{\left\{  |U_{n}|\leq k\right\}  }{(1-\Phi_{\delta
_{1},\delta_{2}})A(x,t,\nabla u_{n}).\nabla({{U_{n}}-}\langle T_{k}%
(U)\rangle_{\nu})}\leq\omega(n,\nu,\delta_{1},\delta_{2}). \label{120510}%
\end{equation}
Then it follows that
\begin{equation}
\overline{\lim}_{n,\nu}\int\limits_{\left\{  |U_{n}|\leq k\right\}
}{A(x,t,\nabla u_{n}).\nabla\left(  {{U_{n}}-}\langle{{{{{T_{k}}(U)\rangle}%
}_{\nu}}}\right)  }\leq0, \label{12052}%
\end{equation}
which implies%
\begin{equation}
\overline{\lim}_{n\rightarrow\infty}\int\limits_{\left\{  |U_{n}|\leq
k\right\}  }{A(x,t,\nabla u_{n}).\nabla\left(  {{U_{n}}-{T_{k}}(U)}\right)
}\leq0, \label{12061}%
\end{equation}
since $\left\{  \langle{{{{{T_{k}}(U)\rangle}}_{\nu}}}\right\}  $ converges to
$T_{k}(U)$ in $X.$ On the other hand, from the weak convergence of $\left\{
T_{k}(U_{n})\right\}  $ to $T_{k}(U)$ in $X,$ we verify that%
\[
\int\limits_{\left\{  |U_{n}|\leq k\right\}  }{A(x,t,\nabla(T_{k}%
(U)+h_{n})).\nabla\left(  {{T_{k}}({U_{n}})-{T_{k}}(U)}\right)  }=\omega(n).
\]
Thus we get
\[
\int\limits_{\left\{  |U_{n}|\leq k\right\}  }{\left(  {A(x,t,\nabla
u_{n})-A(x,t,\nabla(T_{k}(U)+h_{n}))}\right)  .\nabla\left(  {{u_{n}}-\left(
{{T_{k}}(U)+{h_{n}}}\right)  }\right)  }=\omega(n).
\]
Then, it is easy to show that, up to a subsequence,
\begin{equation}
\left\{  \nabla u_{n}\right\}  \text{ converges to }\nabla u,\qquad\text{
}a.e.\text{ in }Q. \label{pp}%
\end{equation}
Therefore, $\left\{  A(x,t,\nabla u_{n})\right\}  $ converges to $A(x,t,\nabla
u)$ weakly in $(L^{p^{\prime}}(Q))^{N}$ ; and from (\ref{12061}) we find
\[
\overline{\lim}_{n\rightarrow\infty}\int_{Q}A(x,t,\nabla u_{n}).\nabla{T_{k}%
}({U_{n}})\leq\int_{Q}A(x,t,\nabla u)\nabla T_{k}(U).
\]
Otherwise, $\left\{  {A(x,t,\nabla\left(  {{T_{k}}(U_{n})+{h_{n}}}\right)
)}\right\}  $ converges weakly in $(L^{p^{\prime}}(Q))^{N}$to some $F_{k},$
from Proposition \ref{mun}, and we obtain that $F_{k}={A(x,t,\nabla\left(
{{T_{k}}(U)+{h}}\right)  ).}$ Hence
\begin{align*}
&  \overline{\lim}_{n\rightarrow\infty}\int_{Q}A(x,t,\nabla(T_{k}(U_{n}%
)+h_{n})).\nabla(T_{k}(U_{n})+h_{n})\\
&  \leq\overline{\lim}_{n\rightarrow\infty}\int_{Q}A(x,t,\nabla u_{n}).\nabla
T_{k}(U_{n})+\overline{\lim}_{n\rightarrow\infty}\int_{Q}A(x,t,\nabla
(T_{k}(U_{n})+h_{n})).\nabla h_{n}\\
&  \leq\int_{Q}A(x,t,\nabla(T_{k}(U)+h)).\nabla(T_{k}(U)+h).
\end{align*}
As a consequence
\begin{equation}
\left\{  T_{k}(U_{n})\right\}  \text{ converges to }T_{k}(U),\text{ strongly
in }X,\qquad\forall k>0. \label{02041}%
\end{equation}
Then \textit{to finish the proof we have to check that }$u$\textit{ is a
solution of} (\ref{pmu}).\medskip\medskip
\end{proof}

In order to prove (\ref{12059}) we need a first Lemma, inspired of \cite[Lemma
6.1]{DMOP}. It extends the results of \cite[Lemma 6 and Lemma 7]{Pe08}
relative to sequences of solutions with smooth data:

\begin{lemma}
\label{april261}Let $\psi_{1,\delta},\psi_{2,\delta}\in C^{1}(Q)$ be uniformly
bounded in $W^{1,\infty}(Q)$ with values in $[0,1],$ and such that $\int%
_{Q}{\psi_{1,\delta}}d\mu_{s}^{-}\leq\delta$ and $\int_{Q}{\psi_{2,\delta}%
}d\mu_{s}^{+}\leq\delta$. Let $\left\{  u_{n}\right\}  $ satisfying the
assumptions of Theorem \ref{sta}, and $U_{n}=u_{n}-h_{n}.$  Then%
\begin{equation}
\frac{1}{m}\int\limits_{\left\{  m\leq{U_{n}}<2m\right\}  }{{{\left\vert
{\nabla{u_{n}}}\right\vert }^{p}}{\psi_{2,\delta}}}=\omega(n,m,\delta
),\quad\quad\frac{1}{m}\int\limits_{\left\{  m\leq{U_{n}}<2m\right\}
}{{{\left\vert {\nabla{U_{n}}}\right\vert }^{p}}{\psi_{2,\delta}}}%
=\omega(n,m,\delta),\label{13051}%
\end{equation}%
\begin{equation}
\frac{1}{m}\int\limits_{-2m<{U_{n}}\leq-m}{{{\left\vert {\nabla{u_{n}}%
}\right\vert }^{p}}{\psi_{1,\delta}}}=\omega(n,m,\delta),\qquad\frac{1}{m}%
\int\limits_{-2m<{U_{n}}\leq-m}{{{\left\vert {\nabla{U_{n}}}\right\vert }^{p}%
}{\psi_{1,\delta}}}=\omega(n,m,\delta),\label{13052}%
\end{equation}
and for any $k>0,$%
\begin{equation}
\int\limits_{\left\{  m\leq{U_{n}}<m+k\right\}  }{{{\left\vert {\nabla{u_{n}}%
}\right\vert }^{p}}{\psi_{2,\delta}}}=\omega(n,m,\delta),\qquad\int%
\limits_{\left\{  m\leq{U_{n}}<m+k\right\}  }{{{\left\vert {\nabla{U_{n}}%
}\right\vert }^{p}}{\psi_{2,\delta}}}=\omega(n,m,\delta),\label{13053}%
\end{equation}%
\begin{equation}
\int\limits_{\left\{  -m-k<{U_{n}}\leq-m\right\}  }{{{\left\vert {\nabla
{u_{n}}}\right\vert }^{p}}{\psi_{1,\delta}}}=\omega(n,m,\delta),\qquad
\int\limits_{\left\{  -m-k<{U_{n}}\leq-m\right\}  }{{{\left\vert {\nabla
{U_{n}}}\right\vert }^{p}}{\psi_{1,\delta}}}=\omega(n,m,\delta).\label{13054}%
\end{equation}

\end{lemma}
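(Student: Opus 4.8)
The strategy is to choose, for each fixed $m$, admissible test-function pairs $(S,\varphi)$ in the renormalized formulation \eqref{renor} (more precisely in its extended form \eqref{partu}) built from the truncation $H_m$ and the cut-off functions $\psi_{i,\delta}$, and then to let $n\to\infty$, $m\to\infty$, $\delta\to0$ in that order, exploiting the singular-measure conditions \eqref{renor2}, \eqref{renor3} and the hypotheses on $\rho_n,\eta_n$ from Theorem \ref{sta}. The first step is to establish \eqref{13051}: I would plug $S=\overline{H_m}$ together with a test function of the form $\varphi=\psi_{2,\delta}\zeta$ (with $\zeta\in C^1_c([0,T))$, $\zeta\ge0$, then $\zeta\uparrow1$), or more directly take $J$ equal to a truncation selecting the level set $\{m\le U_n<2m\}$, so that the coercivity estimate \eqref{condi1} turns $\tfrac1m\int_{\{m\le U_n<2m\}}S'(U_n)A(x,t,\nabla u_n).\nabla U_n\,\psi_{2,\delta}$ into a bound from below by $\tfrac{\Lambda_1}{m}\int_{\{m\le U_n<2m\}}|\nabla U_n|^p\psi_{2,\delta}$ up to lower-order terms controlled by \eqref{alp} and the $L^{p'}$-bounds on $g_n,\rho_n^2,\eta_n^2,a$.

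\textbf{Key steps.} After writing the renormalized equation against $\psi_{2,\delta}$, the right-hand side splits into: (a) the diffuse part $\int S'(U_n)\psi_{2,\delta}\,d\widehat{\lambda_{n,0}}$ plus the contributions of $\rho_{n,0},\eta_{n,0}$, all of which are $O(1/m)$ times bounded quantities by the a priori estimates of Proposition \ref{estsup} (applied to $u_n$) plus Remark \ref{05041}; (b) the singular part, which by \eqref{renor2}–\eqref{renor3} for $u_n$ contributes, in the limit $m\to\infty$, a term comparable to $\int_Q \psi_{2,\delta}\,d\rho_{n,s}$ and $\int_Q\psi_{2,\delta}\,d\eta_{n,s}$. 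The point of choosing $\psi_{2,\delta}$ with $\int_Q\psi_{2,\delta}\,d\mu_s^+\le\delta$, combined with the narrow convergence $\rho_n\to\mu_s^+$ (so $\int\psi_{2,\delta}d\rho_n=\int\psi_{2,\delta}d\mu_s^++\omega(n)\le\delta+\omega(n)$) and the positivity $\rho_n\ge\rho_{n,0}$, $\eta_n\ge\eta_{n,0}$, is precisely that these singular contributions are $\le\delta+\omega(n)$. Passing to the limit $n\to\infty$, then $m\to\infty$, then $\delta\to0$ gives \eqref{13051}. The gradient-of-$u_n$ version follows from the $U_n$ version via $\nabla u_n=\nabla U_n+\nabla h_n$, the elementary inequality $|\nabla u_n|^p\le C(|\nabla U_n|^p+|\nabla h_n|^p)$, and the fact that $\tfrac1m\int_{\{m\le U_n<2m\}}|\nabla h_n|^p\psi_{2,\delta}\le\tfrac1m\|h_n\|_X^p=\omega(n,m)$ since $\{h_n\}$ is bounded in $X$. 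The estimates \eqref{13052} are symmetric, using $\psi_{1,\delta}$ and the decomposition on $\{-2m<U_n\le-m\}$. Finally \eqref{13053}–\eqref{13054} follow by summing \eqref{13051}–\eqref{13052} over the dyadic scales between $m$ and $m+k$: since $\{m\le U_n<m+k\}\subset\bigcup_{j}\{2^j m\le U_n<2^{j+1}m\}$ with only $\lceil\log_2(1+k/m)\rceil$ terms, and each is $\omega(n,\cdot,\delta)$, a telescoping/finite-sum argument (or directly choosing the truncation $T_{k,m}$ of Proposition \ref{estsup} in the test function) closes it; one must be a little careful that the number of dyadic blocks stays controlled as $m\to\infty$, which it does since $k$ is fixed.

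\textbf{Main obstacle.} The delicate point is the rigorous justification of the test functions: the pair $(\overline{H_m},\varphi)$ with $\varphi$ depending on $\psi_{i,\delta}$ must genuinely satisfy the admissibility requirements of Definition \ref{defin} (namely $\varphi\in X\cap L^\infty(Q)$ with $\varphi_t\in X'+L^1(Q)$ and $\varphi(\cdot,T)=0$), which forces the use of Steklov time-averages $[\,\cdot\,]_l$ as in the proof of Lemma \ref{integ}, and one must control the extra commutator term coming from $\partial_t\psi_{i,\delta}$; this is exactly where the decomposition $(\psi_\delta^\pm)_t=(\psi_\delta^\pm)_t^1+(\psi_\delta^\pm)_t^2$ with the bounds \eqref{41} is used, so that $\int_Q |S(U_n)|\,|\partial_t\psi_{i,\delta}|\le \|S(U_n)\|_X\,\|(\psi)_t^1\|_{X'}+\|S(U_n)\|_\infty\|(\psi)_t^2\|_{1,Q}\le C\delta$ uniformly in $n$ and $m$. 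The second subtlety is keeping track of all error terms honestly through the iterated limit $\overline{\lim}_n$, then $\overline{\lim}_m$, then $\overline{\lim}_\delta$ — in particular making sure no term requires the limits in a different order — but this is bookkeeping once the test-function choice is set up correctly.
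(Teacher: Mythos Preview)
Your overall strategy---test the renormalized equation against $\psi_{i,\delta}$ times a suitable truncation, use coercivity on the left and the narrow-convergence/singular-measure hypotheses on the right---is the correct architecture, and it matches the paper's route. But there are two genuine gaps in the execution.

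First, the deduction of \eqref{13053}--\eqref{13054} from \eqref{13051}--\eqref{13052} by dyadic covering fails: \eqref{13053} carries \emph{no} factor $1/m$, while \eqref{13051} only says $\tfrac{1}{m}\int_{\{m\le U_n<2m\}}|\nabla u_n|^p\psi_{2,\delta}=\omega(n,m,\delta)$. For fixed $k$ and large $m$ the set $\{m\le U_n<m+k\}$ sits inside a single dyadic shell $\{m\le U_n<2m\}$, and all you get from \eqref{13051} is that $\int_{\{m\le U_n<m+k\}}|\nabla u_n|^p\psi_{2,\delta}\le m\cdot\omega(n,m,\delta)$, which is useless. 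You correctly sense this (``or directly choosing the truncation $T_{k,m}$\ldots''), and indeed the paper proves \eqref{13053} by a \emph{separate} choice of test function $S_{k,m,\ell}$ whose second derivative sits on $\{m\le U_n<m+k\}$ without any $1/m$; there is no shortcut via \eqref{13051}.

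Second, for \eqref{13051} itself the naive choice $S=\overline{H_m}$ is problematic because $H_m'$ is \emph{even}: the $S''$-term then produces both $\tfrac{1}{m}\int_{\{m\le U_n<2m\}}\ldots\psi_{2,\delta}$ and $\tfrac{1}{m}\int_{\{-2m<U_n\le -m\}}\ldots\psi_{2,\delta}$, and the latter cannot be killed since the hypothesis on $\psi_{2,\delta}$ concerns only $\mu_s^+$. The paper uses a \emph{one-sided} function $S_{m,\ell}$ supported in $[m,\infty)$, with an auxiliary upper cut at level $2m+\ell$ (needed so that $S'$ is compactly supported); the high-level contribution is then handled via \eqref{renor2} and the hypothesis $\int\psi_{2,\delta}\,d\mu_s^+\le\delta$. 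Moreover the saturation $J=T_1$ forces the coercive term to carry the factor $T_1(S_{m,\ell}(U_n))$, which equals $1$ only on $\{\tfrac{3}{2}m\le U_n<2m\}$; the paper then covers $\{m\le U_n<2m\}$ by three shells of ratio $4/3$. Your claim that the diffuse right-hand side is ``$O(1/m)$ times bounded quantities'' is also off: the mechanism is not decay but that $S_{m,\ell}'$ is supported where $U_n\ge m$, so terms like $\int S'_{m,\ell}(U_n)\psi_{2,\delta}f_n$ vanish as $m\to\infty$ by Remark \ref{05041}. Finally, the $\partial_t\psi_{i,\delta}$ worry is misplaced here: the $\psi_{i,\delta}$ are only assumed uniformly bounded in $W^{1,\infty}(Q)$ (the decomposition \eqref{41} is for the specific $\psi_\delta^\pm$ of Proposition \ref{04041}, not for generic $\psi_{i,\delta}$), and the paper simply bounds $|\int(\psi_{2,\delta})_t\,\overline{T_1}(S_{m,\ell}(U_n))|\le \|\psi_{2,\delta}\|_{W^{1,\infty}}\int_{\{U_n\ge m\}}|U_n|=\omega(\ell,n,m)$.
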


\begin{proof}
(i) Proof of (\ref{13051}), (\ref{13052}). Set for any $r\in\mathbb{R}$ and
any $m,\ell\geq1$%
\[
{S_{m,\ell}}(r)=\int_{0}^{r}{\left(  {\frac{{-m+\tau}}{m}{\chi_{\lbrack
m,2m]}}(\tau)+{\chi_{(2m,2m+\ell]}}(\tau)+\frac{{4m+2h-\tau}}{{2m+\ell}}%
{\chi_{(2m+\ell,4m+2h]}}(\tau)}\right)  d\tau,}%
\]%
\[
{S_{m}}(r){=}\int_{0}^{r}{\left(  {\frac{{-m+\tau}}{m}{\chi_{\lbrack m,2m]}%
}(\tau)+{\chi_{(2m,\infty)}}(\tau)}\right)  d\tau}.
\]
Note that ${S}_{m,\ell}^{\prime\prime}{=\chi}_{\left[  m,2m\right]  }/m-{\chi
}_{\left[  2m+\ell,2(2m+\ell)\right]  }/(2m+\ell).$ We choose $(\xi
,J,S)=(\psi_{2,\delta},T_{1},S_{m,\ell})$ as test functions in (\ref{partu})
for $u_{n},$ and observe that, from (\ref{muni}),
\begin{equation}
\widehat{\mu_{n,0}}=\mu_{n,0}-(h_{n})_{t}=\widehat{\lambda_{n,0}}+\rho
_{n,0}-\eta_{n,0}=f_{n}-\operatorname{div}g_{n}+\rho_{n,0}-\eta_{n,0}%
.\label{xxx}%
\end{equation}
Thus we can write $%
%TCIMACRO{\tsum _{i=1}^{6}}%
%BeginExpansion
{\textstyle\sum_{i=1}^{6}}
%EndExpansion
A_{i}\leq%
%TCIMACRO{\tsum _{i=7}^{12}}%
%BeginExpansion
{\textstyle\sum_{i=7}^{12}}
%EndExpansion
A_{i},$ where
\begin{align*}
A_{1} &  =-\int_{\Omega}{{\psi_{2,\delta}}(0){T_{1}}({S_{m,\ell}}({u_{0,n}%
})){S_{m,\ell}}({u_{0,n}})dx,\quad}A_{2}=-\int_{Q}{{{\left(  {{\psi_{2,\delta
}}}\right)  }_{t}}\overline{T_{1}}({S_{m,\ell}}({U_{n}})),}\\
A_{3} &  =\int_{Q}S_{m,\ell}^{\prime}{({U_{n}}){T_{1}}({S_{m,\ell}}({U_{n}%
}))A(x,t,\nabla u_{n})\nabla{\psi_{2,\delta}},\quad}A_{4}=\int_{Q}(S_{m,\ell
}^{\prime}{{{{({U_{n}}))}}^{2}\psi_{2,\delta}}{T}}_{1}^{\prime}{({S_{m,\ell}%
}({U_{n}}))A(x,t,\nabla u_{n})\nabla{U_{n},}}\\
A_{5} &  =\frac{1}{m}\int\limits_{\left\{  m\leq{U_{n}}\leq2m\right\}  }%
{{\psi_{2,\delta}}{T_{1}}({S_{m,\ell}}({U_{n}}))A(x,t,\nabla u_{n}%
)\nabla{U_{n}},}%
\end{align*}%
\begin{align*}
A_{6} &  =-\frac{1}{{2m+\ell}}\int\limits_{\left\{  2m+\ell\leq{U_{n}%
}<2(2m+\ell)\right\}  }{\psi_{2,\delta}A(x,t,\nabla u_{n})\nabla{U_{n},}}\\
A_{7} &  =\int_{Q}S_{m,\ell}^{\prime}{({U_{n}}){T_{1}}({S_{m,\ell}}({U_{n}%
})){\psi_{2,\delta}}{f_{n},\quad\quad}}A_{8}=\int_{Q}S_{m,\ell}^{\prime
}{({U_{n}}){T_{1}}({S_{m,\ell}}({U_{n}})){g_{n}.}\nabla{\psi_{2,\delta},}}\\
A_{9} &  =\int_{Q}{{{\left(  S_{m,\ell}^{\prime}{({U_{n}})}\right)  }^{2}%
}{T_{1}^{^{\prime}}}({S_{m,\ell}}({U_{n}})){\psi_{2,\delta}g_{n}.}\nabla
{U_{n},\quad\quad}}A_{10}=\frac{1}{m}\int\limits_{m\leq{U_{n}}\leq2m}{{T_{1}%
}({S_{m,\ell}}({U_{n}})){\psi_{2,\delta}g_{n}.}\nabla{U_{n},}}\\
A_{11} &  =-\frac{1}{{2m+\ell}}\int\limits_{\left\{  2m+\ell\leq{U_{n}%
}<2(2m+\ell)\right\}  }{{\psi_{2,\delta}g_{n}.}\nabla{U_{n}},\quad}A_{12}%
=\int_{Q}S_{m,\ell}^{\prime}{({U_{n}}){T_{1}}({S_{m,\ell}}({U_{n}}%
)){\psi_{2,\delta}}d\left(  {{\rho_{n,0}}-{\eta_{n,0}}}\right)  .}%
\end{align*}
Since $||S_{m,\ell}(u_{0,n})||_{1,\Omega}\leq\int\limits_{\left\{  m\leq
u_{0,n}\right\}  }u_{0,n}dx$, we find $A_{1}=\omega(\ell,n,m)$. Otherwise
\[
|A_{2}|\leq{\left\Vert {{\psi_{2,\delta}}}\right\Vert _{{W^{1,\infty}}({Q})}%
}\int\limits_{\left\{  m\leq U_{n}\right\}  }{{U_{n}}},\qquad|A_{3}%
|\leq{\left\Vert {{\psi_{2,\delta}}}\right\Vert _{{W^{1,\infty}}({Q})}}%
\int\limits_{\left\{  m\leq U_{n}\right\}  }\left(  |a|+\Lambda_{2}{\left\vert
{\nabla{u_{n}}}\right\vert }^{p-1}\right)  ,
\]
which imply $A_{2}=\omega(\ell,n,m)$ and $A_{3}=\omega(\ell,n,m).$ Using
(\ref{renor2}) for $u_{n}$, we have
\[
A_{6}=-\int_{Q}{{\psi_{2,\delta}}d{{\left(  {{\rho_{n,s}}-{\eta_{n,s}}%
}\right)  }^{+}}}+\omega(\ell)=\omega(\ell,n,m,\delta).
\]
Hence $A_{6}=\omega(\ell,n,m,\delta),$ since ${{{\left(  {{\rho_{n,s}}%
-{\eta_{n,s}}}\right)  }^{+}}}$ converges to $\mu_{s}^{+}$ as $n\rightarrow
\infty$ in the narrow topology, and $\int_{Q}{\psi_{2,\delta}}d\mu_{s}^{+}%
\leq\delta.$ We also obtain $A_{11}=\omega(\ell)$ from (\ref{lim3}).

\noindent Now $\left\{  S_{m,\ell}^{\prime}(U_{n})T_{1}(S_{m,\ell}%
(U_{n}))\right\}  _{\ell}$ converges to $S_{m}^{\prime}(U_{n})T_{1}%
(S_{m}(U_{n}))$, $\left\{  S_{m}^{\prime}(U_{n})T_{1}(S_{m}(U_{n}))\right\}
_{n}$ converges to $S_{m}^{\prime}(U)$ $T_{1}(S_{m}(U))$, $\left\{
S_{m}^{\prime}(U)T_{1}(S_{m}(U))\right\}  _{m}$ converges to $0$,
weak-$^{\ast}$ in $L^{\infty}(Q)$  and $\left\{  f_{n}\right\}  $ converges to
$f$ weakly in $L^{1}(Q)$, $\left\{  g_{n}\right\}  $ converges to $g$ strongly
in $(L^{p^{\prime}}(Q))^{N}$. From Remark \ref{05041}, we obtain%
\begin{align*}
A_{7} &  =\int_{Q}S_{m}^{\prime}{({U_{n}}){T_{1}}({S_{m}}({U_{n}}%
)){\psi_{2,\delta}}{f_{n}}}+\omega(\ell)=\int_{Q}S_{m}^{\prime}{(U){T_{1}%
}({S_{m}}(U)){\psi_{2,\delta}}f}+\omega(\ell,n)=\omega(\ell,n,m),\\
A_{8} &  =\int_{Q}S_{m}^{\prime}{({U_{n}}){T_{1}}({S_{m}}({U_{n}})){g_{n}%
.}\nabla{\psi_{2,\delta}}}+\omega(\ell)=\int_{Q}S_{m}^{\prime}{(U){T_{1}%
}({S_{m}}(U))g\nabla{\psi_{2,\delta}}}+\omega(\ell,n)=\omega(\ell,n,m).
\end{align*}
\newline Otherwise, $A_{12}\leq\int_{Q}{{\psi_{2,\delta}}d{\rho_{n}}}$, and
$\left\{  \int_{Q}{{\psi_{2,\delta}}d{\rho_{n}}}\right\}  $ converges to
$\int_{Q}{\psi_{2,\delta}}d\mu_{s}^{+},$ thus $A_{12}\leq\omega(\ell
,n,m,\delta)$.

\noindent Using Holder inequality and the condition (\ref{condi1}), we have
\[
g_{n}.\nabla U_{n}-A(x,t,\nabla u_{n})\nabla{U_{n}}\leq c_{1}\left(
|g_{n}|^{p^{\prime}}+|\nabla h_{n}|^{p}+|a|^{p^{\prime}}\right)
\]
with $c_{1}=c_{1}(p,\Lambda_{1},\Lambda_{2}),$ which implies
\[
A_{9}-A_{4}\leq c_{1}\int_{Q}{{{\left(  S_{m,\ell}^{\prime}{({U_{n}})}\right)
}^{2}T}}_{1}^{\prime}{({S_{m,\ell}}({U_{n}})){\psi_{2,\delta}}\left(
{|{g_{n}}{|^{p^{\prime}}}+|{h_{n}}{|^{p}}}+|a|^{p^{\prime}}\right)  =\;}%
\omega(\ell,n,m).
\]
Similarly we also show that $A_{10}-A_{5}/2\leq\omega(\ell,n,m)$. Combining
the estimates, we get $A_{5}/2\leq\omega(\ell,n,m,\delta)$. Using Holder
inequality we have
\[
A(x,t,\nabla u_{n})\nabla U_{n}\geq\frac{\Lambda_{1}}{2}|\nabla u_{n}%
|^{p}-c_{2}(|a|^{p^{\prime}}+|\nabla h_{n}|^{p}).
\]
with $c_{2}=c_{2}(p,\Lambda_{1},\Lambda_{2}),$ which implies
\[
\frac{1}{m}\int\limits_{\left\{  m\leq{U_{n}}<2m\right\}  }{{{\left\vert
{\nabla{u_{n}}}\right\vert }^{p}}{\psi_{2,\delta}}{T_{1}}({S_{m,\ell}}({U_{n}%
}))=\;}\omega(\ell,n,m,\delta).
\]
Note that for all $m$ $>4$, $S_{m,\ell}(r)\geq1$ for any $r\in\lbrack\frac
{3}{2}m,2m];$ hence $T_{1}(S_{m,\ell}(r))=1.$ So,
\[
\frac{1}{m}\int\limits_{\left\{  \frac{3}{2}m\leq{U_{n}}<2m\right\}
}{{{\left\vert {\nabla{u_{n}}}\right\vert }^{p}}{\psi_{2,\delta}}}=\omega
(\ell,n,m,\delta).
\]
Since ${\left\vert {\nabla{U_{n}}}\right\vert ^{p}}\leq{2^{p-1}}{\left\vert
{\nabla{u_{n}}}\right\vert ^{p}}+{2^{p-1}}{\left\vert {\nabla{h_{n}}%
}\right\vert ^{p}}$, there also holds
\[
\frac{1}{m}\int\limits_{\left\{  \frac{3}{2}m\leq{U_{n}}<2m\right\}
}{{{\left\vert {\nabla{U_{n}}}\right\vert }^{p}}{\psi_{2,\delta}}}=\omega
(\ell,n,m,\delta).
\]
We deduce (\ref{13051}) by summing on each set $\left\{  (\frac{4}{3}%
)^{i}m\leq{U_{n}}\leq(\frac{4}{3})^{i+1}m\right\}  $ for $i=0,1,2.$ Similarly,
we can choose $(\xi,\psi,S)=(\psi_{1,\delta},T_{1},\tilde{S}_{m,\ell})$ as
test functions in (\ref{partu}) for $u_{n},$ where $\tilde{S}_{m,\ell}(r)=$
${S_{m,\ell}}(-r),$ and we obtain (\ref{13052}).\medskip\ 

(ii) Proof of (\ref{13053}), (\ref{13054}). We set, for any $k,m,\ell\geq1,$%
\[
{S_{k,m,\ell}}(r)=\int_{0}^{r}{\left(  {{T_{k}}(\tau-{T_{m}}(\tau
)){\chi_{\lbrack m,k+m+\ell]}}+k\frac{{2(k+\ell+m)-\tau}}{{k+m+\ell}}%
{\chi_{(k+m+\ell,2(k+m+\ell)]}}}\right)  d\tau}%
\]%
\[
{S_{k,m}}(r)=\int_{0}^{r}{{T_{k}}(\tau-{T_{m}}(\tau)){\chi_{\lbrack m,\infty
)}}d\tau.}%
\]
We choose $(\xi,\psi,S)=(\psi_{2,\delta},T_{1},S_{k,m,\ell})$ as test
functions in (\ref{partu}) for $u_{n}$. In the same way we also obtain
\[
\int\limits_{\left\{  m\leq{U_{n}}<m+k\right\}  }{{{\left\vert {\nabla{u_{n}}%
}\right\vert }^{p}}{\psi_{2,\delta}}{T_{1}}({S_{k,m,\ell}}({U_{n}}))}%
=\omega(\ell,n,m,\delta).
\]
Note that $T_{1}(S_{k,m,\ell}(r))$ $=1$ for any $r$ $\geq m+1$, thus
$\int\limits_{\left\{  m+1\leq{U_{n}}<m+k\right\}  }{{{\left\vert
{\nabla{u_{n}}}\right\vert }^{p}}{\psi_{2,\delta}}}=\omega(n,m,\delta),$ which
implies (\ref{13053}) by changing $m$ into $m-1$. Similarly, we obtain
(\ref{13054}).\medskip
\end{proof}

Next we look at the behaviour near $E.$

\begin{lemma}
\label{near} Estimate (\ref{12059}) holds.
\end{lemma}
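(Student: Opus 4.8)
The goal is to estimate
\[
I_{1}=\int\limits_{\left\{  |U_{n}|\leq k\right\}  }{\Phi_{\delta_{1}
,\delta_{2}}A(x,t,\nabla u_{n}).\nabla\left(  {{U_{n}}-}\langle T_{k}
(U)\rangle_{\nu}\right)  },
\]
where $\Phi_{\delta_{1},\delta_{2}}=\psi_{\delta_{1}}^{+}\psi_{\delta_{2}}
^{+}+\psi_{\delta_{1}}^{-}\psi_{\delta_{2}}^{-}$ is supported near $E=E^{+}\cup E^{-}$. The natural strategy, following \cite[Lemma 6.1]{DMOP} and its parabolic adaptation in \cite{Pe08}, is to plug a suitable admissible pair $(S,\varphi)$ into the renormalized formulation (\ref{renor}) for $u_{n}$, or rather into the extended inequality (\ref{partu}) and the distributional identity (\ref{renor4}). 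The test function should localize to $\{|U_{n}|\leq k\}$ via a truncation $S$ with $\mathrm{supp}\,S'\subset[-(k+1),k+1]$, for instance $S=H_{k+1}$-type or a smoothed truncation $S_{k}$ with $S_{k}(r)=r$ on $[-k,k]$, and the spatial test function should be $\varphi=\Phi_{\delta_{1},\delta_{2}}\bigl(T_{k}(U_{n})-\langle T_{k}(U)\rangle_{\nu}\bigr)$ — or more precisely its Steklov regularization $\Phi_{\delta_{1},\delta_{2}}\bigl([T_{k}(U_{n})]_{l}-\langle T_{k}(U)\rangle_{\nu}\bigr)$, since $T_{k}(U_{n})$ itself is not regular enough in time to serve directly as a test function. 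Here the Landes approximation $\langle T_{k}(U)\rangle_{\nu}$ is used precisely because it has a time derivative in $X$ and converges strongly to $T_{k}(U)$ in $X$.

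The computation then produces several terms: a parabolic (time-derivative) term, a diffusion term which is exactly $I_{1}$ up to lower-order pieces, terms carrying the data $f_{n}, g_{n}, h_{n}$, a singular-measure term involving $\rho_{n,0}, \eta_{n,0}$ (and, through the renormalization limits (\ref{renor2})–(\ref{renor3}), the singular parts $\rho_{n,s}, \eta_{n,s}$), and boundary/initial contributions. First I would handle the time-derivative term: after integrating by parts and using the convergence properties of the Steklov averages (property (ii) of Definition \ref{ste}) to pass $l\to 0$, together with $(\langle T_{k}(U)\rangle_{\nu})_{t}=\nu(T_{k}(U)-\langle T_{k}(U)\rangle_{\nu})\in X$, one shows this term is $\geq\omega(n,\nu,l,\dots)$ using the monotonicity trick from Lemma \ref{integ} (the functional $\mathcal J$ and inequality (\ref{222})) and the fact that $\langle T_{k}(U)\rangle_{\nu}\to T_{k}(U)$ strongly in $X$ and a.e. Next the data terms: $g_{n}\to g$ strongly in $(L^{p'})^{N}$, $f_{n}\rightharpoonup f$ weakly in $L^{1}$ tested against the bounded a.e.-convergent factor $\Phi_{\delta_{1},\delta_{2}}\cdot(T_k(U_n)-\langle T_k(U)\rangle_\nu)$ (use Remark \ref{05041}), and $h_{n}\to h$ strongly in $X$; all these contribute $\omega(n,\nu,l)$ and then $\to 0$ as $\delta_{1},\delta_{2}\to 0$ because $\Phi_{\delta_{1},\delta_{2}}\to 0$ strongly in $L^{1}$ and a.e. by Proposition \ref{04041}.

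The crux is the singular term. Because $\varphi$ carries the cutoff $\Phi_{\delta_{1},\delta_{2}}$, the contribution of $\rho_{n,0}-\eta_{n,0}$ (and the singular parts picked up by the renormalization limits) reduces to integrals of the form $\int_{Q}\psi_{\delta_{1}}^{\pm}\psi_{\delta_{2}}^{\pm}\,(\text{bounded factor})\,d\rho_{n}$ and similarly with $\eta_{n}$. The point is that on $\{|U_{n}|\leq k\}$ the factor $T_{k}(U_{n})-\langle T_k(U)\rangle_\nu$ is bounded, and the measures $\rho_{n}$ concentrate (in the narrow limit) on $E^{+}$ where $\psi_{\delta_{1}}^{-}\psi_{\delta_{2}}^{-}$ vanishes, while $\psi_{\delta_{1}}^{+}\psi_{\delta_{2}}^{+}$ is controlled by the estimates (\ref{12054})–(\ref{12057}): $\int_{Q}\psi_{\delta_{1}}^{-}d\rho_{n}=\omega(n,\delta_{1})$, $\int_{Q}(1-\psi_{\delta_{1}}^{+}\psi_{\delta_{2}}^{+})d\rho_{n}=\omega(n,\delta_{1},\delta_{2})$, and likewise for $\eta_{n}$. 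I expect the main obstacle to be the bookkeeping of the renormalization limits (\ref{renor2})–(\ref{renor3}) for $u_{n}$: one needs Lemma \ref{april261}, applied with $\psi_{1,\delta}=\psi_{\delta_{1}}^{-}\psi_{\delta_{2}}^{-}$ and $\psi_{2,\delta}=\psi_{\delta_{1}}^{+}\psi_{\delta_{2}}^{+}$, to guarantee that the truncation-at-level-$m$ errors (the pieces $\frac1m\int_{m\leq U_n<2m}|\nabla u_n|^p\psi_{2,\delta}$ etc.) are $\omega(n,m,\delta_{1},\delta_{2})$, so that the "$S'$ has compact support" restriction can be relaxed and the singular contributions are correctly accounted for with the right sign. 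Assembling all pieces — the nonnegative parabolic term on one side, the data and singular terms as $\omega(\dots)$ — yields, after sending $l\to 0$, $m\to\infty$, then $n\to\infty$, $\nu\to\infty$, and finally $\delta_{1},\delta_{2}\to 0$, exactly the claimed bound $I_{1}\leq\omega(n,\nu,\delta_{1},\delta_{2})$.
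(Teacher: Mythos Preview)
Your plan has a genuine gap in the treatment of the singular measure contribution. With the test function $\varphi=\Phi_{\delta_1,\delta_2}\bigl([T_k(U_n)]_l-\langle T_k(U)\rangle_\nu\bigr)$ and an $S$ whose derivative is supported in $[-(k+1),k+1]$, the term coming from $\rho_{n,0}-\eta_{n,0}$ in (\ref{renor}) is (after $l\to 0$)
\[
\int_Q S'(U_n)\,\Phi_{\delta_1,\delta_2}\,\bigl(T_k(U_n)-\langle T_k(U)\rangle_\nu\bigr)\,d(\rho_{n,0}-\eta_{n,0}).
\]
The factor $S'(U_n)\bigl(T_k(U_n)-\langle T_k(U)\rangle_\nu\bigr)$ is bounded by $2k$ but has no definite sign, so the only available bound is $2k\int_Q\Phi_{\delta_1,\delta_2}\,d(\rho_n+\eta_n)$. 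You then invoke (\ref{12054})--(\ref{12057}), but you misread them: (\ref{12056}) says $\int_Q(1-\psi_{\delta_1}^+\psi_{\delta_2}^+)\,d\rho_n=\omega(n,\delta_1,\delta_2)$, i.e.\ $\rho_n$ is \emph{concentrated} where $\psi_{\delta_1}^+\psi_{\delta_2}^+\approx 1$. Hence $\int_Q\psi_{\delta_1}^+\psi_{\delta_2}^+\,d\rho_n\to\mu_s^+(Q)$, which does \emph{not} vanish as $\delta_1,\delta_2\to 0$. (Your argument is precisely what works in the ``far from $E$'' Lemma \ref{far}, where the cutoff is $1-\Phi_{\delta_1,\delta_2}$ and (\ref{12056})--(\ref{12057}) genuinely kill the measure term; near $E$ the mechanism must be different.)

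The paper's proof proceeds along another route. It first splits
\[
I_1=\int_Q\Phi_{\delta_1,\delta_2}A(x,t,\nabla u_n)\cdot\nabla T_k(U_n)\;-\;\int_{\{|U_n|\le k\}}\Phi_{\delta_1,\delta_2}A(x,t,\nabla u_n)\cdot\nabla\langle T_k(U)\rangle_\nu,
\]
and the second integral is $\omega(n,\nu,\delta_1)$ directly by Proposition \ref{mun}(iv), Remark \ref{05041}, and $\Phi_{\delta_1,\delta_2}\to 0$ a.e. For the first integral one treats $\psi_{\delta_1}^+\psi_{\delta_2}^+$ and $\psi_{\delta_1}^-\psi_{\delta_2}^-$ separately, taking in (\ref{renor}) for $u_n$ the pair $(\varphi,S)=(\psi_{\delta_1}^+\psi_{\delta_2}^+,\hat S_{k,m})$ with $\hat S_{k,m}(r)=\int_0^r(k-T_k(\tau))H_m(\tau)\,d\tau$. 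Then $\hat S_{k,m}''=-\chi_{[-k,k]}+\tfrac{2k}{m}\chi_{[-2m,-m]}$, so the $S''$-term in (\ref{renor}) produces exactly $-\int_Q\psi_{\delta_1}^+\psi_{\delta_2}^+A\cdot\nabla T_k(U_n)$ plus an error on $\{-2m<U_n\le-m\}$ handled by Lemma \ref{april261}. Crucially $\hat S_{k,m}'(r)=(k-T_k(r))H_m(r)\ge 0$, so in the measure term $\int_Q\hat S_{k,m}'(U_n)\psi_{\delta_1}^+\psi_{\delta_2}^+\,d(\eta_{n,0}-\rho_{n,0})$ the $\rho_{n,0}$-part has the right sign to be dropped, leaving $\le 2k\int_Q\psi_{\delta_1}^+\psi_{\delta_2}^+\,d\eta_n=\omega(n,\delta_2)$ by (\ref{12054}). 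The one-sided, signed choice of $S$ --- not a Landes-type test function in $\varphi$ --- is the missing idea.
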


\begin{proof}
There holds%
\[
I_{1}=\int_{Q}{\Phi_{\delta_{1},\delta_{2}}A(x,t,\nabla u_{n}).\nabla{T_{k}%
}({U_{n}})-}\int\limits_{\left\{  |U_{n}|\leq k\right\}  }{\Phi_{\delta
_{1},\delta_{2}}A(x,t,\nabla u_{n}).\nabla{\langle T_{k}(U)\rangle}_{\nu}{.}}%
\]
From Proposition \ref{mun}, (iv), $\left\{  A(x,t,\nabla\left(  T_{k}%
(U_{n})+h_{n}\right)  ).\nabla\langle T_{k}(U)\rangle_{\nu}\right\}  $
converges weakly in $L^{1}(Q)$ to $F_{k}\nabla\langle T_{k}(U)\rangle_{\nu}.$
And $\left\{  \chi_{\left\{  |U_{n}|\leq k\right\}  }\right\}  $ converges to
$\chi_{|U|\leq k},$ $a.e.$ in $Q$ , and $\Phi_{\delta_{1},\delta_{2}}$
converges to $0$ $a.e.$ in $Q$ as $\delta_{1}\rightarrow0,$ and $\Phi
_{\delta_{1},\delta_{2}}$ takes its values in $\left[  0,1\right]  $. From
Remark \ref{05041}, we have
\begin{align*}
&  \int\limits_{\left\{  |U_{n}|\leq k\right\}  }{{\Phi_{{\delta_{1}}%
,{\delta_{2}}}}A(x,t,\nabla u_{n}).\nabla{\langle T_{k}(U)\rangle}_{\nu}}%
=\int_{Q}{\chi_{\left\{  |U_{n}|\leq k\right\}  }{\Phi_{{\delta_{1}}%
,{\delta_{2}}}}A(x,t,\nabla\left(  T_{k}(U_{n})+h_{n}\right)  ).\nabla\langle
T_{k}(U)\rangle}_{\nu}\\
&  =\int_{Q}{\chi_{|U|\leq k}{\Phi_{{\delta_{1}},{\delta_{2}}}}F_{k}%
.\nabla{\langle T_{k}(U)\rangle}_{\nu}}+\omega(n)=\omega(n,\nu,{\delta_{1}}).
\end{align*}
Therefore, if we prove that
\begin{equation}
\int_{Q}{\Phi_{\delta_{1},\delta_{2}}A(x,t,\nabla u_{n}).\nabla{T_{k}}({U_{n}%
})}\leq\omega(n,\delta_{1},\delta_{2}), \label{120511}%
\end{equation}
then we deduce (\ref{12059}). As noticed in \cite{DMOP,Pe08}, it is precisely
for this estimate that we need the double cut ${\psi_{{\delta_{1}}}^{+}%
\psi_{{\delta_{2}}}^{+}.}$ To do this, we set, for any $m>k>0,$ and any
$r\in\mathbb{R},$%
\[
{\hat{S}_{k,m}}(r)=\int_{0}^{r}{\left(  {k-{T_{k}}(\tau)}\right)  H{_{m}}%
(\tau)d\tau,}%
\]
where $H{_{m}}$ is defined at (\ref{Hm}). Hence supp ${\hat{S}_{k,m}\subset
}\left[  -2m,k\right]  ;$ and ${\hat{S}}_{k,m}^{\prime\prime}{=-\chi}_{\left[
-k,k\right]  }+\frac{2k}{m}{\chi}_{\left[  -2m,-m\right]  }.$ We choose
$(\varphi,S)=({\psi_{{\delta_{1}}}^{+}\psi_{{\delta_{2}}}^{+}},{\hat{S}_{k,m}%
})$ as test functions in (\ref{renor}). From (\ref{xxx}), we can write
\[
{A}_{1}+{A}_{2}-{A}_{3}+{A}_{4}+{A}_{5}+{A}_{6}=0,
\]
where
\begin{align*}
&  {A}_{1}=-\int_{Q}({{{{\psi_{{\delta_{1}}}^{+}\psi_{{\delta_{2}}}^{+}}})}%
}_{t}{\hat{S}_{k,m}({U_{n}}),\quad A}_{2}=\int_{Q}{(k-{T_{k}}({U_{n}}))H{_{m}%
}({U_{n}})A(x,t,\nabla u_{n}).\nabla({\psi_{{\delta_{1}}}^{+}\psi_{{\delta
_{2}}}^{+})},}\\
{A}_{3}  &  =\int_{Q}{\psi_{{\delta_{1}}}^{+}\psi_{{\delta_{2}}}%
^{+}A(x,t,\nabla u_{n}).\nabla{T_{k}}({U_{n}}),\quad A}_{4}=\frac{{2k}}{m}%
\int\limits_{\left\{  -2m<{U_{n}}\leq-m\right\}  }{\psi_{{\delta_{1}}}^{+}%
\psi_{{\delta_{2}}}^{+}A(x,t,\nabla u_{n}).\nabla{U_{n}},}\\
{A}_{5}  &  =-\int_{Q}{(k-{T_{k}}({U_{n}}))H{_{m}}({U_{n}})\psi_{{\delta_{1}}%
}^{+}\psi_{{\delta_{2}}}^{+}}d\widehat{\lambda_{n,0}},\quad{A}_{6}=\int%
_{Q}{(k-{T_{k}}({U_{n}})){H_{m}}({U_{n}})\psi_{{\delta_{1}}}^{+}\psi
_{{\delta_{2}}}^{+}d\left(  {{\eta_{n,0}-\rho_{n,0}}}\right)  .}%
\end{align*}

We first estimate ${A}_{3}.$ As in \cite[p.585]{Pe08}, since $\left\{
{{\hat{S}_{k,m}}({U_{n}})}\right\}  $ converges to {{$\hat{S}_{k,m}$}}${(}%
${{$U$}}${)}$ weakly in $X,$ and {{$\hat{S}_{k,m}$}}${(}${{$U$}}${)\in
L}^{\infty}(Q),$ using (\ref{41}), we find
\[
{A}_{1}=-\int_{Q}({{{{\psi_{{\delta_{1}}}^{+})}}}}_{t}{{{{\psi_{{\delta_{2}}%
}^{+}}}\hat{S}_{k,m}}({U})-}\int_{Q}{{{{\psi_{{\delta_{1}}}^{+}}}}}%
({{{{\psi_{{\delta_{2}}}^{+})}}}}_{t}{{\hat{S}_{k,m}}({U})+\omega(n)=\omega
}(n,\delta_{1}).
\]

Next consider ${A}_{2}.$ Notice that {{$U$}}${{_{n}=}}T_{2m}(U_{n})$ on supp
$({H{_{m}}(}${{$U$}}${{_{n}})})$. From Proposition \ref{mun}, (iv), the
sequence $\left\{  A(x,t,\nabla\left(  T_{2m}(U_{n})+h_{n}\right)
).\nabla(\psi_{\delta_{1}}^{+}\psi_{\delta_{2}}^{+})\right\}  $ converges to
$F_{2m}.\nabla(\psi_{\delta_{1}}^{+}\psi_{\delta_{2}}^{+})$ weakly in
$L^{1}(Q)$. From Remark \ref{05041} and the convergence of $\psi_{\delta_{1}%
}^{+}\psi_{\delta_{2}}^{+}$ in $X$ to $0$ as $\delta_{1}$ tends to $0$, we
find
\[
{A}_{2}=\int_{Q}{(k-{T_{k}}(U)){H_{m}}(U)F_{2m}.\nabla({\psi_{{\delta_{1}}%
}^{+}\psi_{{\delta_{2}}}^{+})}}+\omega(n)=\omega(n,{\delta_{1}}).
\]

Then consider ${A}_{4}.$ Then for some $c_{1}=c_{1}(p,\Lambda_{2}),$
\[
\left\vert {A}_{4}\right\vert \leq c_{1}\frac{{2k}}{m}\int\limits_{\left\{
-2m<{U_{n}}\leq-m\right\}  }\left(  |\nabla{u_{n}}|^{p}+|\nabla{U_{n}}%
|^{p}+|a|^{p^{\prime}}\right)  \psi_{{\delta_{1}}}^{+}\psi_{{\delta_{2}}}%
^{+}.
\]
Since ${\psi_{{\delta_{1}}}^{+}}$ takes its values in $\left[  0,1\right]  ,$
from Lemma \ref{april261}, we get in particular ${A}_{4}=\omega(n,\delta
_{1},m,\delta_{2})$.

Now we estimate $A_{5}.$ The sequence $\left\{  (k-T_{k}(U_{n})){H{_{m}%
}({U_{n}})\psi_{{\delta_{1}}}^{+}\psi_{{\delta_{2}}}^{+}}\right\}  $ converges
to $(k-T_{k}(U)){H{_{m}}(}${{$U$}}${)\psi_{{\delta_{1}}}^{+}\psi_{{\delta_{2}%
}}^{+},}$ weakly in $X,$ and $\left\{  (k-T_{k}(U_{n}))H_{m}(U_{n})\right\}  $
converges to $(k-T_{k}(U))H_{m}(U),$ weak-$^{\ast}$ in $L^{\infty}(Q)$  and
$a.e.$ in $Q.$ Otherwise $\left\{  f_{n}\right\}  $ converges to $f$ weakly in
$L^{1}\left(  Q\right)  $ and $\left\{  g_{n}\right\}  $ converges to $g$
strongly in $(L^{p^{\prime}}\left(  Q\right)  )^{N}.$ From Remark \ref{05041}
and the convergence of ${\psi_{{\delta_{1}}}^{+}\psi_{{\delta_{2}}}^{+}}$ to
$0$ in $X$ and $a.e.$ in $Q$ as $\delta_{1}\rightarrow0$, we deduce that
\[
{A}_{5}=-\int_{Q}{(k-{T_{k}}({U_{n}}))H{_{m}}({U})\psi_{{\delta_{1}}}^{+}%
\psi_{{\delta_{2}}}^{+}}d\widehat{\nu_{0}}+\omega(n)=\omega(n,\delta_{1}),
\]
where $\widehat{\nu_{0}}=f-\operatorname{div}g.$

Finally ${A}_{6}\leq2k\int_{Q}{\psi_{{\delta_{1}}}^{+}\psi_{{\delta_{2}}}%
^{+}d{\eta_{n}}}$; using (\ref{12054}) we also find ${A}_{6}$ $\leq
\omega(n,\delta_{1},m,\delta_{2}).$ By addition, since ${A}_{3}$ does not
depend on $m,$ we obtain
\[
{A}_{3}=\int_{Q}{\psi_{{\delta_{1}}}^{+}\psi_{{\delta_{2}}}^{+}A(x,t,\nabla
u_{n})\nabla{T_{k}}({U_{n}})}\leq\omega(n,{\delta_{1}},{\delta_{2}}).
\]
Arguying as before with $({\psi_{{\delta_{1}}}^{-}\psi_{{\delta_{2}}}^{-}%
},{\check{S}_{k,m}})$ as test function in (\ref{renor}), where ${\check
{S}_{k,m}(r)=-}${$\hat{S}_{k,m}$}$(-r),$ we get in the same way
\[
\int_{Q}{\psi_{{\delta_{1}}}^{-}\psi_{{\delta_{2}}}^{-}A(x,t,\nabla
u_{n})\nabla{T_{k}}({U_{n}})}\leq\omega(n,{\delta_{1}},{\delta_{2}}).
\]
Then, (\ref{120511}) holds.\medskip
\end{proof}

Next we look at the behaviour far from $E.$

\begin{lemma}
\label{far}. Estimate (\ref{120510}) holds.
\end{lemma}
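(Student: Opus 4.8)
\textbf{Plan for proving Lemma \ref{far} (estimate (\ref{120510})).} The goal is to show that the contribution of the region far from the singular set $E$ to the quantity $I_{2}$ is negligible. The natural test pair in (\ref{renor}) should be built so that its $S$-factor equals $T_{k}(U_{n})-\langle T_{k}(U)\rangle_{\nu}$ near $|U_{n}|\leq k$ and is cut off in a Landes-type way for larger values, while its $\varphi$-factor is $1-\Phi_{\delta_{1},\delta_{2}}$. First I would fix $S=S_{m}$ a $C^{2}$ truncation with $S_{m}(r)=r$ on $[-k,k]$, supported in $[-2m,2m]$, with $S_{m}''$ supported on $\{m\leq|r|\leq 2m\}$, and test (\ref{renor}) for $u_{n}$ with $\varphi=(1-\Phi_{\delta_{1},\delta_{2}})\,[\,\langle T_{k}(U)\rangle_{\nu}\,]_{l}$-type Steklov regularizations, so that the time-derivative terms make sense; then pass to the limit in the Steklov parameter $l$ using the properties recalled after Definition \ref{ste}. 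This reduces $I_{2}$ modulo $\omega(n,\nu,\delta_{1},\delta_{2})$ to a sum of terms that I would label $B_{1},\dots,B_{r}$, exactly parallel to the $A_{i}$ bookkeeping in Lemma \ref{near}.

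The terms fall into three groups. The parabolic/time-derivative terms involving $(1-\Phi_{\delta_{1},\delta_{2}})_{t}$ and the initial datum: here I would use that $U_{0,n}\to u_{0}$ in $L^{1}(\Omega)$, that $T_{k}(U_{n})$ and $\langle T_{k}(U)\rangle_{\nu}$ are bounded in $X$, Proposition \ref{mun}(iii), and the monotonicity trick of Lemma \ref{integ} (the $\mathcal{T}_{k}\geq 0$ inequality from (\ref{222})) to absorb the ``wrong-sign'' piece; these are $\omega(n,\nu)$. The diffusion terms: the genuinely useful one is $\int_{\{|U_{n}|\le k\}}(1-\Phi_{\delta_{1},\delta_{2}})A(x,t,\nabla u_{n}).\nabla(T_{k}(U_{n})-\langle T_{k}(U)\rangle_{\nu})$, which is $I_{2}$ itself; the correction terms with $S_{m}''$ supported on $\{m\le|U_{n}|\le 2m\}$ are controlled by (\ref{13051})–(\ref{13054}) of Lemma \ref{april261} applied with $\psi_{i,\delta}=1-\psi_{\delta_{1}}^{+}\psi_{\delta_{2}}^{+}$ and $1-\psi_{\delta_{1}}^{-}\psi_{\delta_{2}}^{-}$ — note these $\psi_{i,\delta}$ are $C^{1}$, uniformly bounded in $W^{1,\infty}$, valued in $[0,1]$, and satisfy $\int_{Q}(1-\psi_{\delta_{1}}^{-}\psi_{\delta_{2}}^{-})d\mu_{s}^{-}\le\delta_{1}+\delta_{2}$ and the analogous bound for $\mu_{s}^{+}$, by (\ref{12056})–(\ref{12057}), so Lemma \ref{april261} applies. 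The right-hand side terms: the $\widehat{\lambda_{n,0}}=f_{n}-\operatorname{div}g_{n}$ piece is handled by the weak $L^1$ convergence of $f_n$, the strong $(L^{p'})^N$ convergence of $g_n$ and Remark \ref{05041}; the singular-measure pieces $\rho_{n,0},\eta_{n,0}$ come with the cut $(1-\Phi_{\delta_{1},\delta_{2}})$ and using $\rho_n\ge\rho_{n,0}$, $\eta_n\ge\eta_{n,0}$ from (\ref{muno}) together with (\ref{12056})–(\ref{12057}) they are $\omega(n,\delta_{1},\delta_{2})$.

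Collecting these, one gets $I_{2}\le B+\omega(n,\nu,\delta_{1},\delta_{2})$ where $B$ is a sum of nonnegative ``energy'' remainders that are themselves $\omega(n,m,\delta_{1},\delta_{2})$ after letting $m\to\infty$; since $I_{2}$ does not depend on $m$, this yields (\ref{120510}). The technical heart, and the step I expect to be the main obstacle, is the choice of admissible test functions: $\langle T_{k}(U)\rangle_{\nu}$ has the right time regularity but $T_{k}(U_{n})$ does not, so one must first Steklov-regularize, check that $\varphi=(1-\Phi_{\delta_{1},\delta_{2}})[\langle T_k(U)\rangle_\nu]_l$ and its truncated variants lie in $X\cap L^\infty(Q)$ with $\varphi_t\in X'+L^1(Q)$ and $\varphi(\cdot,T)=0$, and justify every limit passage in $l$, then $n$, then $\nu$, then $m$, then $\delta_{1},\delta_{2}$ in the correct order — exactly the order encoded in the $\omega(\cdot)$ notation. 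Once the test-function gymnastics is set up correctly, the remaining estimates are routine applications of (\ref{condi1}), Hölder's inequality, Lemma \ref{april261} and Remark \ref{05041}, just as in Lemma \ref{near}.
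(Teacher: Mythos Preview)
Your plan has a genuine gap: the test function you describe does not produce $I_{2}$. With $S=S_{m}$ satisfying $S_{m}(r)=r$ on $[-k,k]$ and $\varphi=(1-\Phi_{\delta_{1},\delta_{2}})[\langle T_{k}(U)\rangle_{\nu}]_{l}$, the diffusion term in (\ref{renor}) is $\int_{Q}S_{m}'(U_{n})A(x,t,\nabla u_{n}).\nabla\varphi$, which on $\{|U_{n}|\leq k\}$ gives only the $\nabla\langle T_{k}(U)\rangle_{\nu}$ piece and \emph{never} the $\nabla T_{k}(U_{n})$ piece of $I_{2}$. To see $\nabla T_{k}(U_{n})$ in the equation you must test with a function that itself depends on $U_{n}$. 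This is exactly why the paper, following Porretta, uses the composite
\[
R_{n,\nu,\ell}=T_{\ell+k}\bigl(U_{n}-\langle T_{k}(U)\rangle_{\nu}\bigr)-T_{\ell-k}\bigl(U_{n}-T_{k}(U_{n})\bigr),
\]
with an \emph{additional} truncation parameter $\ell>2k$, and tests with the Steklov-regularized $\varphi=\xi_{1,n_{1}}(1-\Phi)[T_{\ell+k}(U_{n}-\langle T_{k}(U)\rangle_{\nu})]_{l_{1}}-\xi_{2,n_{2}}(1-\Phi)[T_{\ell-k}(U_{n}-T_{k}(U_{n}))]_{-l_{2}}$ together with $S=\overline{H_{m}}$. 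The point of the two-piece $R_{n,\nu,\ell}$ is that on $\{|U_{n}|\leq k\}$ it equals $T_{k}(U_{n})-\langle T_{k}(U)\rangle_{\nu}$, while on $\{|U_{n}|\geq \ell+2k\}$ it is the constant $2k\,\mathrm{sign}(U_{n})$; the intermediate strip $\{\ell-2k\leq|U_{n}|<\ell+2k\}$ is then killed by (\ref{13053})--(\ref{13054}) of Lemma \ref{april261}, and $\ell$ is sent to $\infty$ only at the very end.

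Your treatment of the time-derivative terms is also too optimistic. With your $\varphi$, the term $-\int_{Q}\varphi_{t}S_{m}(U_{n})$ contains $-\nu\int_{Q}(1-\Phi)(T_{k}(U)-\langle T_{k}(U)\rangle_{\nu})S_{m}(U_{n})$; since $S_{m}(U_{n})$ is unbounded (of order $m$) and has no sign relation to $T_{k}(U)-\langle T_{k}(U)\rangle_{\nu}$, this factor of $\nu$ cannot be absorbed by the monotonicity trick of Lemma \ref{integ}. In the paper, the carefully split $\varphi$ yields instead $\nu\int_{Q}(1-\Phi)T_{\ell+k}(U-\langle T_{k}(U)\rangle_{\nu})(T_{k}(U)-\langle T_{k}(U)\rangle_{\nu})+\omega(n)$, which is $\geq 0$ because $T_{\ell+k}(r-s)$ and $T_{k}(r)-s$ share a sign when $|s|\leq k$; the remaining time contributions are handled via the convexity inequalities for $\overline{T}_{\ell+k}$ and $\overline{J}$ (with $J(r)=T_{\ell-k}(r-T_{k}(r))$), using crucially that $\overline{T}_{\ell+k}(r-s)-\overline{J}(r)\geq 0$ for $|s|\leq k$. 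Finally, note the limit order: one must send $m\to\infty$ (to invoke (\ref{renor2})--(\ref{renor3}) for $u_{n}$ in the $H_{m}'$ term) \emph{before} $n\to\infty$, not after as you wrote.
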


\begin{proof}
Here we estimate $I_{2};$ we can write
\[
I_{2}=\int\limits_{\left\{  |U_{n}|\leq k\right\}  }{(1-\Phi_{\delta
_{1},\delta_{2}})A(x,t,\nabla u_{n})\nabla\left(  {{T_{k}}({U_{n}})-}\langle
T_{k}(U)\rangle_{\nu}\right)  .}%
\]

\noindent Following the ideas of \cite{Por99}, used also in \cite{Pe08}, we
define, for any $r\in\mathbb{R}$ and $\ell>2k>0$,
\[
{R_{n,\nu,\ell}}={T_{\ell+k}}\left(  {{U_{n}}-}\langle T_{k}(U)\rangle_{\nu
}\right)  -{T_{\ell-k}}\left(  {{U_{n}}-{T_{k}}\left(  {{U_{n}}}\right)
}\right)  .
\]
Recall that $\left\Vert \langle T_{k}(U)\rangle_{\nu}\right\Vert _{\infty
,Q}\leq k,$ and observe that
\begin{equation}
{R_{n,\nu,\ell}}=2k\;\mathrm{sign}({U_{n}})\quad\text{in}\;\left\{
{\left\vert {{U_{n}}}\right\vert }\geq{\ell+2k}\right\}  ,\quad\text{
}|R_{n,\nu,\ell}|\leq4k,\quad R_{n,\nu,\ell}=\omega(n,\nu,\ell)\text{
}a.e.\text{ in }Q,\label{13057}%
\end{equation}%
\begin{equation}
\lim_{n\rightarrow\infty}R_{n,\nu,\ell}={T_{\ell+k}}\left(  {{U}-\langle
T_{k}(U)\rangle}_{\nu}\right)  -{T_{\ell-k}}\left(  {{U}-{T_{k}}\left(  {{U}%
}\right)  }\right)  ,\qquad a.e.\;\text{in}\;{Q},\text{ and weakly in
}X.\label{13058}%
\end{equation}
Next consider $\xi_{1,n_{1}}\in C_{c}^{\infty}([0,T)),\xi_{2,n_{2}}\in
C_{c}^{\infty}((0,T])$ with values in $[0,1],$ such that $(\xi_{1,n_{1}}%
)_{t}\leq0$ and $(\xi_{2,n_{2}})_{t}$ $\geq0$; and $\left\{  \xi_{1,n_{1}%
}(t)\right\}  $ (resp. $\left\{  \xi_{1,n_{2}}(t)\right\}  )$ converges to
$1,\,$for any $t\in\lbrack0,T)$ (resp. $t\in(0,T]$ ); and moreover, for any
$a\in C([0,T];L^{1}(\Omega))$, $\left\{  \int_{Q}a{{{\left(  \xi_{1,n_{1}%
}\right)  }_{t}}}\right\}  $ and $\int_{Q}a{{{\left(  \xi_{2,n_{2}}\right)
}_{t}}}$ converge respectively to $-\int_{\Omega}{a(.,T)dx}$ and $\int%
_{\Omega}{a(.,0)dx.}$ We set
\[
\varphi={\varphi_{n,n_{1},n_{2},{l_{1}},{l_{2},\ell}}}=\xi_{1,n_{1}}%
(1-{\Phi_{\delta_{1},\delta_{2}}}){\left[  {{T_{\ell+k}}\left(  {{U_{n}}%
-}\langle T_{k}(U)\rangle_{\nu}\right)  }\right]  _{{l_{1}}}}-\xi_{2,n_{2}%
}(1-{\Phi_{\delta_{1},\delta_{2}}}){\left[  {{T_{\ell-k}}\left(
{U_{n}-{{{T_{k}}(U_{n})}}}\right)  }\right]  _{{-l_{2}}}.}%
\]
We observe that
\begin{equation}
{\varphi-(1-{\Phi_{\delta_{1},\delta_{2}}}){R_{n,\nu,\ell}}}=\omega
(l_{1},l_{2},n_{1},n_{2})\;\quad\text{ in norm in }X\text{ and }%
a.e.\;\text{in}\;{Q}.\label{13056}%
\end{equation}
We can choose $(\varphi,S)=({\varphi_{n,n_{1},n_{2},{l_{1}},{l_{2},\ell}}%
},\overline{H_{m}})$ as test functions in (\ref{renor4}) for $u_{n}$, where
$\overline{H_{m}}$ is defined at (\ref{Hm}), with $m>\ell+2k.$ We obtain
\[
A_{1}+A_{2}+A_{3}+A_{4}+A_{5}=A_{6}+A_{7},
\]
with
\begin{align*}
A_{1} &  =\int_{\Omega}{\varphi(T){\overline{H_{m}}}({U_{n}(T)})dx,\quad\quad
}A_{2}=-\int_{\Omega}{\varphi(0){\overline{H_{m}}}({u_{0,n}})dx,\quad\quad
}A_{3}=-\int_{Q}{\varphi{_{t}\overline{H_{m}}}({U_{n}}),}\\
A_{4} &  =\int_{Q}H_{m}{({U_{n}})A(x,t,\nabla u_{n}).\nabla\varphi}%
,{\quad\quad}A_{5}=\int_{Q}{{\varphi}}H_{m}^{\prime}{({U_{n}})A(x,t,\nabla
u_{n}).\nabla U_{n}{,}}\\
A_{6} &  =\int_{Q}H_{m}{({U_{n}}){\varphi}d}\widehat{\lambda_{n,0}}%
{{,\quad\quad}}A_{7}=\int_{Q}H_{m}{({U_{n}}){\varphi}d\left(  {{\rho_{n,0}%
}-{\eta_{n,0}}}\right)  .}%
\end{align*}
\newline\textbf{Estimate} \textbf{of} $A_{4}$. This term allows to study
$I_{2}.$ Indeed, $\left\{  H_{m}(U_{n})\right\}  $ converges to $1,$ $a.e.$ in
$Q$; From (\ref{13056}), (\ref{13057}) (\ref{13058}), we have
\begin{align*}
A_{4} &  =\int_{Q}{(1-{\Phi_{\delta_{1},\delta_{2}}})A(x,t,\nabla
u_{n}).\nabla{R_{n,\nu,\ell}}}-\int_{Q}{{R_{n,\nu,\ell}}A(x,t,\nabla
u_{n}).\nabla{\Phi_{\delta_{1},\delta_{2}}+}}\omega(l_{1},l_{2},n_{1}%
,n_{2},m)\\
&  =\int_{Q}{(1-{\Phi_{\delta_{1},\delta_{2}}})A(x,t,\nabla u_{n}%
).\nabla{R_{n,\nu,\ell}+}}\omega(l_{1},l_{2},n_{1},n_{2},m,n,\nu,\ell)\\
&  =I_{2}+\int\limits_{\left\{  \left\vert {{U_{n}}}\right\vert >k\right\}
}{(1-{\Phi_{\delta_{1},\delta_{2}}})A(x,t,\nabla u_{n}).\nabla{R_{n,\nu,\ell
}+}}\omega(l_{1},l_{2},n_{1},n_{2},m,n,\nu,\ell)\\
&  =I_{2}+B_{1}+B_{2}+\omega(l_{1},l_{2},n_{1},n_{2},m,n,\nu,\ell),
\end{align*}
where
\begin{align*}
B_{1} &  =\int\limits_{\left\{  \left\vert {{U_{n}}}\right\vert >k\right\}
}{(1-{\Phi_{\delta,\eta}})({{\chi_{\left\vert {{U_{n}}-\langle T_{k}%
(U)\rangle}_{\nu}\right\vert \leq\ell+k}}-{\chi_{\left\vert \left\vert
{{U_{n}}}\right\vert {-k}\right\vert \leq\ell-k})}}A(x,t,\nabla u_{n}).\nabla
U_{n},}\\
B_{2} &  =-\int\limits_{\left\{  \left\vert {{U_{n}}}\right\vert >k\right\}
}(1-{\Phi_{\delta_{1},\delta_{2}}}){\chi_{\left\vert {{U_{n}}-\langle
T_{k}(U)\rangle}_{\nu}\right\vert \leq\ell+k}}A(x,t,\nabla u_{n}%
).\nabla\langle{{{{{T_{k}}(U)\rangle}}_{\nu}.}}%
\end{align*}
Now $\left\{  A(x,t,\nabla\left(  {{T_{\ell+2k}}({U_{n}})+{h_{n}}}\right)
).\nabla{\langle T_{k}(U)\rangle}_{\nu}\right\}  $ converges to $F_{\ell
+2k}\nabla{\langle T_{k}(}${$U$}${)\rangle}_{\nu},$ weakly in $L^{1}(Q).$
Otherwise $\left\{  \chi_{|U_{n}|>k}{\chi_{\left\vert {{U_{n}}-\langle
T_{k}(U)\rangle}_{\nu}\right\vert \leq\ell+k}}\right\}  $ converges to
$\chi_{|U|>k}{\chi_{\left\vert {{U}-\langle T_{k}(U)\rangle}_{\nu}\right\vert
\leq\ell+k},}$ $a.e.$ in $Q$. And $\left\{  \langle T_{k}(U)\rangle_{\nu
}\right\}  $ converges to $T_{k}(U)$ strongly in $X$. From Remark \ref{05041}
we get%
\begin{align*}
B_{2} &  =-\int_{Q}{(1-{\Phi_{{\delta_{1}},{\delta_{2}}}})\;{\chi_{|U|>k}%
\;}{\chi_{\left\vert {U-\langle T_{k}(U)\rangle}_{\nu}\right\vert \leq\ell+k}%
}F_{\ell+2k}.\nabla{\langle T_{k}(U)\rangle}_{\nu}}+\omega(n)\\
&  =-\int_{Q}{(1-{\Phi_{{\delta_{1}},{\delta_{2}}}}){{{\;}}}{\chi_{|U|>k}%
\;}{\chi_{\left\vert {U-{{{{T_{k}}(U)}}}}\right\vert \leq\ell+k}}F_{\ell
+2k}.\nabla{{{{T_{k}}(U)}}}}+\omega(n,\nu)=\omega(n,\nu),
\end{align*}
since ${\nabla{{{{T_{k}}(}}}}${{{{$U$}}}}${{{{)\;}}}{\chi_{|U|>k}=0.}}$
Besides, we see that, for some $c_{1}=c_{1}(p,\Lambda_{2}),$%
\[
\left\vert B_{1}\right\vert \leq c_{1}\int\limits_{\left\{  \ell
-2k\leq\left\vert {{U_{n}}}\right\vert <\ell+2k\right\}  }{(1-{\Phi
_{\delta_{1},\delta_{2}}})(|\nabla u_{n}|^{p}+|\nabla U_{n}|^{p}+|a|^{p}%
}^{\prime}).
\]
Using (\ref{12056}) and (\ref{12057}) and applying (\ref{13053}) and
(\ref{13054}) to ${1-{\Phi_{\delta_{1},\delta_{2}}}}$, we obtain, for $k>0,$
\begin{equation}
\int\limits_{\left\{  m\leq|{U_{n}}|<m+4k\right\}  }({{{\left\vert
{\nabla{u_{n}}}\right\vert }^{p}+{\left\vert {\nabla{U_{n}}}\right\vert }%
^{p})}(1-{\Phi_{\delta_{1},\delta_{2}}})}=\omega(n,m,\delta_{1},\delta
_{2}).\label{04043}%
\end{equation}
Thus, $B_{1}=\omega(n,\nu,\ell,\delta_{1},\delta_{2}),$ hence $B_{1}%
+B_{2}=\omega(n,\nu,\ell,\delta_{1},\delta_{2}).$ Then
\begin{equation}
A_{4}=I_{2}+\omega(l_{1},l_{2},n_{1},n_{2},m,n,\nu,\ell,\delta_{1},\delta
_{2}).\label{a4}%
\end{equation}
\textbf{Estimate of} $A_{5}$. For $m>\ell+2k$, since $|\varphi|\leq2\ell,$ and
(\ref{13056}) holds, we get, from the dominated convergence Theorem,
\begin{align*}
A_{5} &  =\int_{Q}(1-{\Phi_{\delta_{1},\delta_{2}}})R_{n,\nu,\ell}%
H_{m}^{\prime}(U_{n})A(x,t,\nabla u_{n}).\nabla U_{n}+\omega(l_{1},l_{2}%
,n_{1},n_{2})\\
&  =-\frac{{2k}}{m}\int\limits_{\left\{  m\leq\left\vert {{U_{n}}}\right\vert
<2m\right\}  }{(1-{\Phi_{\delta_{1},\delta_{2}}})A(x,t,\nabla u_{n}).\nabla
U_{n}+}\omega(l_{1},l_{2},n_{1},n_{2});
\end{align*}
here, the final equality followed from the relation, since $m>\ell+2k,$
\begin{equation}
R_{n,\nu,\ell}H_{m}^{\prime}(U_{n})=-\frac{2k}{m}\chi_{m\leq|U_{n}|\leq
2m},\quad a.e.\text{ in }Q.\label{relt}%
\end{equation}
Next we go to the limit in $m,$ by using (\ref{renor2}), (\ref{renor3}) for
$u_{n}$, with ${\phi=(1-{\Phi_{\delta_{1},\delta_{2}}})}$. There holds
\[
A_{5}=-2k\int_{Q}{(1-{\Phi_{\delta_{1},\delta_{2}}})d\left(  (\rho_{n,s}%
-\eta_{n,s})^{+}+(\rho_{n,s}-\eta_{n,s})^{-}\right)  +}\omega(l_{1}%
,l_{2},n_{1},n_{2},m).
\]
Then, from (\ref{12056}) and (\ref{12057}), we get $A_{5}=\omega(l_{1}%
,l_{2},n_{1},n_{2},m,n,\nu,\ell,\delta_{1},\delta_{2}).$ \medskip

\noindent\textbf{Estimate} \textbf{of} $A_{6}$. Again, from (\ref{13056}),
\begin{align*}
A_{6} &  =\int_{Q}H_{m}{({U_{n}}){\varphi f}}_{n}+\int_{Q}g_{n}.\nabla
(H_{m}{({U_{n}}){\varphi)}}\\
&  =\int_{Q}H_{m}{({U_{n}})(1-{\Phi_{\delta_{1},\delta_{2}}}){R_{n,\nu,\ell}%
f}}_{n}+\int_{Q}g_{n}.\nabla(H_{m}{({U_{n}})(1-{\Phi_{\delta_{1},\delta_{2}}%
}){R_{n,\nu,\ell})+}}\omega(l_{1},l_{2},n_{1},n_{2}).
\end{align*}
Thus we can write ${A_{6}}={D}_{1}+{D}_{2}+{D}_{3}+{D}_{4}+\omega(l_{1}%
,l_{2},n_{1},n_{2}),$ where
\begin{align*}
{D}_{1} &  =\int_{Q}H{{_{m}}({U_{n}})(1-{\Phi_{\delta_{1},\delta_{2}}%
}){R_{n,\nu,\ell}}{f_{n},\qquad}D}_{2}=\int_{Q}{(1-{\Phi_{\delta_{1}%
,\delta_{2}}}){R_{n,\nu,\ell}}H_{m}^{\prime}({U_{n}}){g_{n}.}\nabla{U_{n},}}\\
&  {D}_{3}=\int_{Q}H{{_{m}}({U_{n}})(1-{\Phi_{\delta_{1},\delta_{2}}}){g_{n}%
}.\nabla{R_{n,\nu,\ell},\qquad}D}_{4}=-\int_{Q}H{{_{m}}({U_{n}}){R_{n,\nu
,\ell}g_{n}}.\nabla}{\Phi_{\delta_{1},\delta_{2}}.}%
\end{align*}
Since $\left\{  f_{n}\right\}  $ converges to $f$ weakly in $L^{1}(Q)$, and
(\ref{13057})-(\ref{13058}) hold, we get, from Remark \ref{05041},
\[
{D}_{1}=\int_{Q}{(1-{\Phi_{\delta_{1},\delta_{2}}})\left(  {{T_{\ell+k}%
}\left(  {U-}\langle{{{{{T_{k}}(U)\rangle}}_{\nu}}}\right)  -{T_{\ell-k}%
}\left(  {U-{T_{k}}\left(  U\right)  }\right)  }\right)  f+}\omega
(m,n)=\omega(m,n,\nu,\ell).
\]
We deduce from (\ref{lim3}) that ${D}_{2}=\omega(m)$. Next consider $D_{3}.$
Note that $H_{m}{(}${{$U$}}${{_{n}})=1+\omega(m),}$ and (\ref{13058}) holds,
and $\left\{  g_{n}\right\}  $ converges to $g$ strongly in $(L^{p^{\prime}%
}(Q))^{N},$ and $\langle T_{k}(U)\rangle_{\nu}$ converges to $T_{k}(U)$
strongly in $X.$ Then we obtain successively that
\begin{align*}
{D}_{3} &  =\int_{Q}{(1-{\Phi_{\delta_{1},\delta_{2}}})g.\nabla\left(
{{T_{\ell+k}}\left(  {U-\langle T_{k}(U)\rangle}_{\nu}\right)  -{T_{\ell-k}%
}\left(  {U-{T_{k}}\left(  U\right)  }\right)  }\right)  +}\omega(m,n)\\
&  =\int_{Q}{(1-{\Phi_{\delta_{1},\delta_{2}}})g.\nabla\left(  {{T_{\ell+k}%
}\left(  {U-{T_{k}}(U)}\right)  -{T_{\ell-k}}\left(  {U-{T_{k}}\left(
U\right)  }\right)  }\right)  +}\omega(m,n,\nu)\\
&  =\omega(m,n,\nu,\ell).
\end{align*}
Similarly we also get $D_{4}=\omega(m,n,\nu,\ell)$. Thus ${A_{6}}=\omega
(l_{1},l_{2},n_{1},n_{2},m,n,\nu,\ell,\delta_{1},\delta_{2}).\medskip$

\noindent\textbf{Estimate} \textbf{of} $A_{7}$. We have
\begin{align*}
\left\vert A_{7}\right\vert  &  =\left\vert \int_{Q}{{{{S^{\prime}}_{m}%
}({U_{n}})\left(  {1-{\Phi_{{\delta_{1}},{\delta_{2}}}}}\right)
{R_{n,\nu,\ell}}d\left(  {{\rho_{n,0}}-{\eta_{n,0}}}\right)  }}\right\vert
+\omega({l_{1}},{l_{2}},{n_{1}},{n_{2}})\\
&  \leq4k\int_{Q}{\left(  {1-{\Phi_{{\delta_{1}},{\delta_{2}}}}}\right)
d\left(  \rho_{n}+\eta_{n}\right)  }+\omega({l_{1}},{l_{2}},{n_{1}},{n_{2}}).
\end{align*}
From (\ref{12056}) and (\ref{12057}) we get $A_{7}=\omega(l_{1},l_{2}%
,n_{1},n_{2},m,n,\nu,\ell,\delta_{1},\delta_{2}).\medskip$

\noindent\textbf{Estimate} \textbf{of} $A_{1}+A_{2}+A_{3}$. We set
\[
J(r)={T_{\ell-k}}\left(  r{-{T_{k}}\left(  r\right)  }\right)  ,\qquad\forall
r\in\mathbb{R},
\]
and use the notations $\overline{J}{\ }${and}$\mathcal{J}$ of (\ref{lam}).
From the definitions of $\xi_{1,n_{1}},\xi_{1,n_{2}},$ we can see that
\begin{align}
A_{1}+A_{2} &  =-\int_{\Omega}J(U_{n}(T)){{\overline{H_{m}}}({U_{n}(T)}%
)dx}-\int_{\Omega}T_{\ell+k}(u_{0,n}-z_{\nu}){{\overline{H_{m}}}(}%
u_{0,n})dx+\omega(l_{1},l_{2},n_{1},n_{2})\nonumber\\
&  =-\int_{\Omega}J(U_{n}(T))U_{n}(T)dx-\int_{\Omega}T_{\ell+k}(u_{0,n}%
-z_{\nu})u_{0,n}dx+\omega(l_{1},l_{2},n_{1},n_{2},m),\label{a1a2}%
\end{align}
where $z_{\nu}=\langle T_{k}(U)\rangle_{\nu}(0).$ We can write $A_{3}%
=F_{1}+F_{2},$ where
\begin{align*}
\text{ }F_{1} &  =-\int_{Q}{{{\left(  {{\xi_{n_{1}}}(1-{\Phi_{\delta
_{1},\delta_{2}}}){{\left[  {{T_{\ell+k}}\left(  {U_{n}-\langle T_{k}%
(U)\rangle}_{\nu}\right)  }\right]  }_{{l_{1}}}}}\right)  }_{t}\overline
{H_{m}}}({U_{n}}),}\\
F_{2} &  =\int_{Q}{{{\left(  {{\xi_{n_{2}}}(1-{\Phi_{\delta_{1},\delta_{2}}%
}){{\left[  {{T_{\ell-k}}\left(  {U_{n}-{T_{k}}\left(  {U_{n})}\right)
}\right)  }\right]  }_{{-l_{2}}}}}\right)  }_{t}\overline{H_{m}}}({U_{n}}).}%
\end{align*}
\textbf{Estimate} \textbf{of} $F_{2}$. We write $F_{2}=G_{1}+G_{2}+G_{3},$
with
\begin{align*}
G_{1} &  =-\int_{Q}{{{\left(  {{\Phi_{\delta_{1},\delta_{2}}}}\right)  }_{t}%
}{\xi_{n_{2}}}{{\left[  {{T_{\ell-k}}\left(  {U_{n}-{T_{k}}\left(
U_{n}\right)  }\right)  }\right]  }_{{-l_{2}}}\overline{H_{m}}}({U_{n}}),}\\
G_{2} &  =\int_{Q}{(1-{\Phi_{\delta_{1},\delta_{2}}}){{\left(  {{\xi_{n_{2}}}%
}\right)  }_{t}}{{\left[  {{T_{\ell-k}}\left(  {U_{n}-{T_{k}}\left(
U_{n}\right)  }\right)  }\right]  }_{{-l_{2}}}}\overline{H_{m}}(U_{n}),}\\
G_{3} &  =\int_{Q}{{\xi_{n_{2}}}(1-{\Phi_{\delta_{1},\delta_{2}}}){{\left(
{{{\left[  {{T_{\ell-k}}\left(  {U_{n}-{T_{k}}\left(  {U_{n}}\right)
}\right)  }\right]  }_{{-l_{2}}}}}\right)  }_{t}}\overline{H_{m}}(U_{n}).}%
\end{align*}
We find easily that
\[
{G}_{1}=-\int_{Q}{{{\left(  {{\Phi_{\delta_{1},\delta_{2}}}}\right)  }_{t}%
J}(U_{n})U_{n}+}\omega(l_{1},l_{2},n_{1},n_{2},m),
\]%
\[
{G}_{2}=\int_{Q}{(1-{\Phi_{\delta_{1},\delta_{2}}}){{\left(  {{\xi_{n_{2}}}%
}\right)  }_{t}}}J(U_{n}){{\overline{H_{m}}}({U_{n}})+}\omega({l_{1},l_{2}%
})=\int_{\Omega}J{(u_{0,n})u_{0,n}dx+}\omega(l_{1},l_{2},n_{1},n_{2},m).
\]
Next consider $G_{3}.$ Setting $b={{\overline{H_{m}}}(}${{$U$}}${{_{n}}){,}}$
there holds from (\ref{tkp}) and (\ref{222}),
\[
((\left[  J{{(b)}}\right]  _{-{l_{2}}})_{t}b)(.,t)=\frac{{{b(.,t)}}}{l_{2}%
}(J{{(b)(.,t)-}}J{{{{(b)(.,t-l}}_{2}{{)}}).}}%
\]
Hence
\[
{\left(  {{{\left[  {{T_{\ell-k}}\left(  {{U_{n}}-{T_{k}}\left(  {{U_{n}}%
}\right)  }\right)  }\right]  }_{-{l_{2}}}}}\right)  _{t}\overline{H_{m}}%
}({U_{n}})\geq{\left(  {{{\left[  \mathcal{J}{({\overline{H_{m}}}({U_{n}}%
))}\right]  }_{-{l_{2}}}}}\right)  _{t}=\left(  {{{\left[  \mathcal{J}%
{({U_{n}})}\right]  }_{-{l_{2}}}}}\right)  _{t},}%
\]
since $\mathcal{J}$ is constant in $\left\{  \left\vert r\right\vert \geq
m+\ell+2k\right\}  .$ Integrating by parts in $G_{3},$ we find
\begin{align*}
G_{3} &  \geq\int_{Q}{{\xi_{2,n_{2}}}(1-{\Phi_{\delta_{1},\delta_{2}}%
}){{\left(  {{{\left[  \mathcal{J}{({U_{n}})}\right]  }_{{-l_{2}}}}}\right)
}_{t}}}=-\int_{Q}{{{\left(  {{\xi_{2,n_{2}}}(1-{\Phi_{\delta_{1},\delta_{2}}%
})}\right)  }_{t}}{{\left[  \mathcal{J}{({U_{n}})}\right]  }_{{-l_{2}}}}}%
+\int_{\Omega}{{\xi_{2,n_{2}}}}(T){{{\left[  \mathcal{J}{({U_{n}})}\right]
}_{{-l_{2}}}}(T)dx}\\
&  =-\int_{Q}{{{\left(  {{\xi_{2,n_{2}}}}\right)  }_{t}}(1-{\Phi_{\delta
_{1},\delta_{2}}})}\mathcal{J}{({U_{n}})}+\int_{Q}{{\xi_{2,n_{2}}{\left(
{{\Phi_{\delta_{1},\delta_{2}}}}\right)  }_{t}}}\mathcal{J}{({U_{n}})}%
+\int_{\Omega}{{\xi_{2,n_{2}}}}(T)\mathcal{J}{({U_{n}}(T))dx+}\omega
({l_{1},l_{2}})\\
&  =-\int_{\Omega}\mathcal{J}{({u_{0,n}})dx+\int_{Q}{{{\left(  {{\Phi
_{\delta_{1},\delta_{2}}}}\right)  }_{t}}\mathcal{J}{({U_{n}})}+}\int_{\Omega
}\mathcal{J}{({U_{n}}(T))dx}+}\omega(l_{1},l_{2},n_{1},n_{2}).
\end{align*}
Therefore, since $\mathcal{J}(${{$U$}}${{_{n}}})-J(${{$U$}}${{_{n}}})${{$U$}%
}${{_{n}}}=-{\overline{J}(}${{$U$}}${{_{n}})}$ and ${\overline{J}(u_{0,n}%
)=}J{(u_{0,n})u_{0,n}-}\mathcal{J}{(u_{0,n}),}$ we obtain
\begin{equation}
{F}_{2}\geq\int_{\Omega}{\overline{J}(u_{0,n})dx}\text{ }-\int_{Q}{{{\left(
{{\Phi_{\delta_{1},\delta_{2}}}}\right)  }_{t}}{\overline{J}}({U_{n}})}%
+\int_{\Omega}\mathcal{J}{(U_{n}(T))dx+}\omega(l_{1},l_{2},n_{1}%
,n_{2},m).\label{f2}%
\end{equation}
\textbf{Estimate of} $F_{1}.$ Since $m>\ell+2k,$ there holds ${{T_{\ell+k}%
}\left(  {{U_{n}}-}\langle{{{{{T_{k}}(U)\rangle}}_{\nu}}}\right)  ={T_{\ell
+k}}\left(  {{\overline{H_{m}}}({U_{n}})-}\langle{{{{{T_{k}}({\overline{H_{m}%
}}(U))\rangle}}_{\nu}}}\right)  }$ on supp${{\overline{H_{m}}}(}${{$U$}%
}${{_{n}}).}$ Hence we can write $F_{1}=L_{1}+L_{2},$ with
\begin{align*}
L_{1} &  =-\int_{Q}{{{\left(  {{\xi_{1,n_{1}}}(1-{\Phi_{\delta_{1},\delta_{2}%
}}){{\left[  {{T_{\ell+k}}\left(  {{\overline{H_{m}}}({U_{n}})-}%
\langle{{{{{T_{k}}({\overline{H_{m}}}(U))\rangle}}_{\nu}}}\right)  }\right]
}_{{l_{1}}}}}\right)  }_{t}}\left(  {{\overline{H_{m}}}({U_{n}})-}%
\langle{{{{{T_{k}}({\overline{H_{m}}}(U)\rangle}}_{\nu}}}\right)  }\\
L_{2} &  =-\int_{Q}{{{\left(  {{\xi_{1,n_{1}}}(1-{\Phi_{\delta_{1},\delta_{2}%
}}){{\left[  {{T_{\ell+k}}\left(  {{\overline{H_{m}}}({U_{n}})-}%
\langle{{{{{T_{k}}({\overline{H_{m}}}(U))\rangle}}_{\nu}}}\right)  }\right]
}_{{l_{1}}}}}\right)  }_{t}}}\langle{{{{{T_{k}}({\overline{H_{m}}}(U))\rangle
}}_{\nu}.}}%
\end{align*}
Integrating by parts we have, by definition of the Landes-time approximation,
\begin{align}
L_{2} &  =\int_{Q}{{\xi_{1,n_{1}}}(1-{\Phi_{\delta_{1},\delta_{2}}}){{\left[
{{T_{\ell+k}}\left(  {{\overline{H_{m}}}({U_{n}})-}\langle{{{{{T_{k}%
}({\overline{H_{m}}}(U))\rangle}}_{\nu}}}\right)  }\right]  }_{{l_{1}}}%
}{{\left(  \langle{{{{{T_{k}}({\overline{H_{m}}}(U))\rangle}}_{\nu}}}\right)
}_{t}}}\nonumber\\
&  +\int_{\Omega}{{\xi_{1,n_{1}}}}(0){{{\left[  {{T_{\ell+k}}\left(
{{\overline{H_{m}}}({U_{n}})-}\langle{{{{{T_{k}}({\overline{H_{m}}}%
(U))\rangle}}_{\nu}}}\right)  }\right]  }_{{l_{1}}}}(0)\langle{{{{{T_{k}%
}({\overline{H_{m}}}(U))\rangle}}_{\nu}}}(0)dx}\nonumber\\
&  =\nu\int_{Q}{(1-{\Phi_{\delta_{1},\delta_{2}}}){T_{\ell+k}}\left(  {U_{n}%
-}\langle{{{{{T_{k}}(U)\rangle}}_{\nu}}}\right)  \left(  {{T_{k}}(U)-}%
\langle{{{{{T_{k}}(U)\rangle}}_{\nu}}}\right)  }+\int_{\Omega}{{T_{\ell+k}%
}\left(  {{u_{0,n}}-{z_{\nu}}}\right)  {z_{\nu}dx+}}\omega(l_{1},l_{2}%
,n_{1},n_{2}).\label{a2}%
\end{align}
We decompose $L_{1}$ into $L_{1}=K_{1}+K_{2}+K_{3},$ where
\begin{align*}
K_{1} &  =-\int_{Q}{{{\left(  {{\xi_{1,n_{1}}}}\right)  }_{t}}(1-{\Phi
_{\delta_{1},\delta_{2}}}){{\left[  {{T_{\ell+k}}\left(  {{\overline{H_{m}}%
}({U_{n}})-}\langle{{{{{T_{k}}({\overline{H_{m}}}(U))\rangle}}_{\nu}}}\right)
}\right]  }_{{l_{1}}}}\left(  {{\overline{H_{m}}}({U_{n}})-}\langle{{{{{T_{k}%
}({\overline{H_{m}}}(U))\rangle}}_{\nu}}}\right)  }\\
K_{2} &  =\int_{Q}{{\xi_{1,n_{1}}{\left(  {{\Phi_{\delta_{1},\delta_{2}}}%
}\right)  }_{t}}{{\left[  {{T_{\ell+k}}\left(  {{\overline{H_{m}}}({U_{n}}%
)-}\langle{{{{{T_{k}}({\overline{H_{m}}}(U))\rangle}}_{\nu}}}\right)
}\right]  }_{{l_{1}}}}\left(  {{\overline{H_{m}}}({U_{n}})-}\langle{{{{{T_{k}%
}({\overline{H_{m}}}(U))\rangle}}_{\nu}}}\right)  }\\
K_{3} &  =-\int_{Q}{{\xi_{1,n_{1}}}(1-{\Phi_{\delta_{1},\delta_{2}}}){{\left(
{{{\left[  {{T_{\ell+k}}\left(  {{\overline{H_{m}}}({U_{n}})-}\langle
{{{{{T_{k}}({\overline{H_{m}}}(U))\rangle}}_{\nu}}}\right)  }\right]
}_{{l_{1}}}}}\right)  }_{t}}\left(  {{\overline{H_{m}}}({U_{n}})-}%
\langle{{{{{T_{k}}({\overline{H_{m}}}(U)\rangle}}_{\nu}}}\right)  .}%
\end{align*}
Then we check easily that
\[
K_{1}=\int_{\Omega}{{T_{\ell+k}}\left(  {{U_{n}}-}\langle{{{{{T_{k}}%
(U)\rangle}}_{\nu}}}\right)  (T)\left(  {{U_{n}}-}\langle{{{{{T_{k}}%
(U)\rangle}}_{\nu}}}\right)  (T)dx+}\omega(l_{1},l_{2},n_{1},n_{2},m),
\]%
\[
K_{2}=\int_{Q}{{{\left(  {{\Phi_{\delta_{1},\delta_{2}}}}\right)  }_{t}%
}{T_{\ell+k}}\left(  {{U_{n}}-}\langle{{{{{T_{k}}(U)\rangle}}_{\nu}}}\right)
\left(  {{U_{n}}-}\langle{{{{{T_{k}}(U)\rangle}}_{\nu}}}\right)  +}%
\omega(l_{1},l_{2},n_{1},n_{2},m).
\]
Next consider $K_{3}.$ Here we use the function $\mathcal{T}_{k}$ defined at
(\ref{tkp}). We set $b={{\overline{H_{m}}}(}${{$U$}}${{_{n}})-}\langle
{{{{{T_{k}}({\overline{H_{m}}}(}}}}${{{{$U$}}}}${{{{))\rangle}}_{\nu}.}}$
Hence from (\ref{222}),
\begin{align*}
((\left[  {{T_{\ell+k}(b)}}\right]  _{{l_{1}}})_{t}b)(.,t) &  =\frac
{{{b(.,t)}}}{l_{1}}({{T_{\ell+k}(b)(.,t+l}}_{1}{{)-{{T_{\ell+k}(b)(.,t)}})}}\\
{} &  {{\leq}}\frac{1}{l_{1}}(\mathcal{T}_{\ell+k}(b)({{(.,t+l}}_{1}{{))}%
}-\mathcal{T}_{\ell+k}(b)(.,t))=(\left[  \mathcal{T}_{\ell+k}(b)\right]
_{l_{1}})_{t}.
\end{align*}
Thus
\[
({{{{\left[  {{T_{\ell+k}}\left(  {{\overline{H_{m}}}({U_{n}})-}%
\langle{{{{{T_{k}}({\overline{H_{m}}}(U))\rangle}}_{\nu}}}\right)  }\right]
}_{{l_{1}}})}}_{t}}\left(  {{\overline{H_{m}}}({U_{n}})-}\langle{{{{{T_{k}%
}({\overline{H_{m}}}(U))\rangle}}_{\nu}}}\right)  \leq({{{{\left[
\mathcal{T}_{\ell+k}{{({U_{n}}-}\langle{{{{{T_{k}}(U)\rangle}}_{\nu}}}%
}\right]  }_{{l_{1}}})}}_{t}.}%
\]
Then
\begin{align*}
{K}_{3} &  \geq-\int_{Q}{{\xi_{1,n_{1}}}(1-{\Phi_{\delta_{1},\delta_{2}}}%
)}({{{{{{\left[  \mathcal{T}{_{\ell+k}\left(  {{U_{n}}-}\langle{{{{{T_{k}%
}(U)\rangle}}_{\nu}}}\right)  }\right]  }_{{l_{1}}})}}}_{t}}}\\
&  =\int_{Q}{{{\left(  {{\xi_{1,n_{1}}}}\right)  }_{t}}(1-{\Phi_{\delta
_{1},\delta_{2}}}){{\left[  \mathcal{T}{_{\ell+k}\left(  {{U_{n}}-}%
\langle{{{{{T_{k}}(U)\rangle}}_{\nu}}}\right)  }\right]  }_{{l_{1}}}}}%
-\int_{Q}{{\xi_{1,n_{1}}{\left(  {{\Phi_{\delta_{1},\delta_{2}}}}\right)
}_{t}}{{\left[  \mathcal{T}{_{\ell+k}\left(  {{U_{n}}-}\langle{{{{{T_{k}%
}(U)\rangle}}_{\nu}}}\right)  }\right]  }_{{l_{1}}}}}\\
&  +\int_{\Omega}{{\xi_{1,n_{1}}}}(0){{{\left[  \mathcal{T}{_{\ell+k}\left(
{{U_{n}}-}\langle{{{{{T_{k}}(U)\rangle}}_{\nu}}}\right)  }\right]  }_{{l_{1}}%
}}(0)dx}\\
&  =-\int_{\Omega}\mathcal{T}{_{\ell+k}\left(  {{U_{n}(T)}-\langle{{{{{T_{k}%
}(U)\rangle}}_{\nu}{{(T)}}}}}\right)  dx}-\int_{Q}{{{\left(  {{\Phi
_{\delta_{1},\delta_{2}}}}\right)  }_{t}}}\mathcal{T}{_{\ell+k}\left(
{{U_{n}}-}\langle{{{{{T_{k}}(U)\rangle}}_{\nu}}}\right)  }\\
&  +\int_{\Omega}\mathcal{T}{{_{\ell+k}}\left(  {{u_{0,n}}-{z_{\nu}}}\right)
dx+}\omega({l_{1}},{l_{2}},{n_{1}},{n_{2}}).
\end{align*}
We find by addition, since $T_{\ell+k}(r)-\mathcal{T}{{_{\ell+k}%
(r)={\overline{T}}_{\ell+k}(r)}}$ for any $r\in\mathbb{R},$
\begin{align}
{L}_{1} &  \geq\int_{\Omega}\mathcal{T}{{_{\ell+k}}\left(  {{u_{0,n}}-{z_{\nu
}}}\right)  dx}+\int_{\Omega}{{{\overline{T}}_{\ell+k}}\left(  {{U_{n}%
}(T)-\langle{{{{{T_{k}}(U)\rangle}}_{\nu}}}(T)}\right)  dx}\nonumber\\
&  +\int_{Q}{{{\left(  {{\Phi_{{\delta_{1}},{\delta_{2}}}}}\right)  }_{t}%
}{{\overline{T}}_{\ell+k}}\left(  {{U_{n}}-}\langle{{{{{T_{k}}(U)\rangle}%
}_{\nu}}}\right)  +}\omega({l_{1}},{l_{2}},{n_{1}},{n_{2}},m).\label{a1}%
\end{align}
We deduce from (\ref{a1}), (\ref{a2}), (\ref{f2}),
\begin{align}
{A}_{3} &  \geq\int_{\Omega}\overline{J}{({u_{0,n}})dx}+\int_{\Omega
}\mathcal{T}{{_{\ell+k}}\left(  {{u_{0,n}}-{z_{\nu}}}\right)  dx}+\int%
_{\Omega}{{T_{\ell+k}}\left(  {{u_{0,n}}-{z_{\nu}}}\right)  {z_{\nu}dx}%
}\label{a3}\\
&  +\int_{\Omega}{{{\overline{T}}_{\ell+k}}\left(  {{U_{n}}(T)-}%
\langle{{{{{T_{k}}(U)\rangle}}_{\nu}}(T)}\right)  dx}+\int_{\Omega}%
\mathcal{J}{({U_{n}}(T))dx}+\int_{Q}{{{\left(  {{\Phi_{{\delta_{1}}%
,{\delta_{2}}}}}\right)  }_{t}}\left(  {{{\overline{T}}_{\ell+k}}\left(
{{U_{n}}-}\langle{{{{{T_{k}}(U)\rangle}}_{\nu}}}\right)  -\overline{J}({U_{n}%
})}\right)  }\nonumber\\
&  +\nu\int_{Q}{(1-{\Phi_{{\delta_{1}},{\delta_{2}}}}){T_{\ell+k}}\left(
{{U_{n}}-}\langle{{{{{T_{k}}(U)\rangle}}_{\nu}}}\right)  \left(  {{T_{k}}%
(U)-}\langle{{{{{T_{k}}(U)\rangle}}_{\nu}}}\right)  +}\omega({l_{1}},{l_{2}%
},{n_{1}},{n_{2}},m).\nonumber
\end{align}
Next we add (\ref{a1a2}) and (\ref{a3}). Note that $\mathcal{J}{(}${{$U$}%
}${{_{n}}(T))-J(}${{$U$}}${{_{n}}(T))}${{$U$}}${{_{n}}(T)=-\overline{J}(}%
${{$U$}}${{_{n}}(T)),}$ and also
\[
\mathcal{T}{{_{\ell+k}}\left(  {{u_{0,n}}-{z_{\nu}}}\right)  -{T_{\ell+k}%
}\left(  {{u_{0,n}}-{z_{\nu}}}\right)  ({z_{\nu}-{u_{0,n}})=-{\overline{T}%
}_{\ell+k}}\left(  {{u_{0,n}}-{z_{\nu}}}\right)  .}%
\]
Then we find
\begin{align*}
A_{1}+A_{2}+A_{3} &  \geq\int_{\Omega}{\left(  \overline{J}{({u_{0,n}%
})-{{\overline{T}}_{\ell+k}}\left(  {{u_{0,n}}-{z_{\nu}}}\right)  }\right)
dx}+\int_{\Omega}{\left(  {{{\overline{T}}_{\ell+k}}\left(  {{U_{n}%
}(T)-\langle{{{{{T_{k}}(U)\rangle}}_{\nu}}}(T)}\right)  -\overline{J}({U_{n}%
}(T))}\right)  dx}\\
&  +\int_{Q}{{{\left(  {{\Phi_{{\delta_{1}},{\delta_{2}}}}}\right)  }_{t}%
}\left(  {{{\overline{T}}_{\ell+k}}\left(  {{U_{n}}-}\langle{{{{{T_{k}%
}(U)\rangle}}_{\nu}}}\right)  -\overline{J}({U_{n}})}\right)  }\\
&  +\nu\int_{Q}{(1-{\Phi_{{\delta_{1}},{\delta_{2}}}}){T_{\ell+k}}\left(
{{U_{n}}-}\langle{{{{{T_{k}}(U)\rangle}}_{\nu}}}\right)  \left(  {{T_{k}}%
(U)-}\langle{{{{{T_{k}}(U)\rangle}}_{\nu}}}\right)  +}\omega({l_{1}},{l_{2}%
},{n_{1}},{n_{2}},m).
\end{align*}
Notice that ${{{{\overline{T}}_{\ell+k}}\left(  r{-}s\right)  -\overline
{J}(r)}}${$\geq$}${0}$ for any $r,s\in\mathbb{R}$ such that $\left\vert
s\right\vert \leq k;$ thus
\[
\int_{\Omega}{\left(  {{{\overline{T}}_{\ell+k}}\left(  {{U_{n}}(T)-}%
\langle{{{{{T_{k}}(U)\rangle}}_{\nu}}(T)}\right)  -\overline{J}({U_{n}}%
(T))}\right)  dx\geq0.}%
\]
And $\left\{  {{u_{0,n}}}\right\}  $ converges to $u_{0}$ in $L^{1}(\Omega)$
and $\left\{  U_{n}\right\}  $ converges to $U$ in $L^{1}(Q)$ from Proposition
\ref{mun}. Thus we obtain%
\[%
\begin{array}
[c]{c}%
A_{1}+A_{2}+A_{3}\geq\int_{\Omega}{\left(  \overline{J}{({u_{0}}%
)-{{\overline{T}}_{\ell+k}}\left(  {{u_{0}}-{z_{\nu}}}\right)  }\right)
dx}+\int_{Q}{{{\left(  {{\Phi_{{\delta_{1}},{\delta_{2}}}}}\right)  }_{t}%
}\left(  {{{\overline{T}}_{\ell+k}}\left(  {U-}\langle{{{{{T_{k}}(U)\rangle}%
}_{\nu}}}\right)  -\overline{J}(U)}\right)  }\\
\\
+\nu\int_{Q}{(1-{\Phi_{{\delta_{1}},{\delta_{2}}}}){T_{\ell+k}}\left(
{U-}\langle{{{{{T_{k}}(U)\rangle}}_{\nu}}}\right)  \left(  {{T_{k}}%
(U)-}\langle{{{{{T_{k}}(U)\rangle}}_{\nu}}}\right)  +}\omega({l_{1}},{l_{2}%
},{n_{1}},{n_{2}},m,n).
\end{array}
\]
Moreover ${{T_{\ell+k}}\left(  r{-s}\right)  \left(  {{T_{k}}(r)-s}\right)  }%
${$\geq$}${0}$ for any $r,s\in\mathbb{R}$ such that $\left\vert s\right\vert
\leq k,$ hence
\begin{align*}
A_{1}+A_{2}+A_{3} &  \geq\int_{\Omega}{\left(  \overline{J}{({u_{0}%
})-{{\overline{T}}_{\ell+k}}\left(  {{u_{0}}-{z_{\nu}}}\right)  }\right)
dx}+\int_{Q}{{{\left(  {{\Phi_{{\delta_{1}},{\delta_{2}}}}}\right)  }_{t}%
}\left(  {{{\overline{T}}_{\ell+k}}\left(  {U-}\langle{{{{{T_{k}}(U)\rangle}%
}_{\nu}}}\right)  -\overline{J}(U)}\right)  }\\
& \\
&  {+}\omega({l_{1}},{l_{2}},{n_{1}},{n_{2}},m,n).
\end{align*}
As $\nu\rightarrow\infty,$ $\left\{  z_{\nu}\right\}  $ converges to
$T_{k}(u_{0}),$ $a.e.$ in $\Omega$, thus we get%
\begin{align*}
A_{1}+A_{2}+A_{3} &  \geq\int_{\Omega}{\left(  \overline{J}{({u_{0}%
})-{{\overline{T}}_{\ell+k}}\left(  {{u_{0}}-{T_{k}}({u_{0}})}\right)
}\right)  dx}+\int_{Q}{{{\left(  {{\Phi_{{\delta_{1}},{\delta_{2}}}}}\right)
}_{t}}\left(  {{{\overline{T}}_{\ell+k}}\left(  {U-{T_{k}}(U)}\right)
-\overline{J}(U)}\right)  }\\
&  +\omega({l_{1}},{l_{2}},{n_{1}},{n_{2}},m,n,\nu).
\end{align*}
Finally $\left\vert {{{\overline{T}}_{\ell+k}}\left(  r{-{T_{k}}(r)}\right)
-\overline{J}(r)}\right\vert \leq2k|r|{\chi_{\left\{  \left\vert r\right\vert
\geq\ell\right\}  }}$ for any $r\in\mathbb{R},$ thus
\[
A_{1}+A_{2}+A_{3}\geq\omega({l_{1}},{l_{2}},{n_{1}},{n_{2}},m,n,\nu,\ell).
\]
Combining all the estimates, we obtain $I_{2}\leq\omega(l_{1},l_{2}%
,n_{1},n_{2},m,n,\nu,\ell,\delta_{1},\delta_{2}),$ which implies
(\ref{120510}), since $I_{2}$ does not depend on $l_{1},l_{2},n_{1}%
,n_{2},m,\ell.$\medskip
\end{proof}

Next we conclude the proof of Theorem \ref{sta}:

\begin{lemma}
\label{concl} The function $u$ is a R-solution of (\ref{pmu}).
\end{lemma}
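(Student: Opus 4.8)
The plan is to take the convergences already established in the scheme of the proof — namely $u_n\to u$ and $U_n\to U$ a.e.\ in $Q$, $\nabla u_n\to\nabla u$ a.e.\ (from \eqref{pp}), $T_k(U_n)\to T_k(U)$ strongly in $X$ (from \eqref{02041}), together with the a priori bounds of Proposition \ref{mun} — and pass to the limit in the renormalized formulation \eqref{renor} and in the singular conditions \eqref{renor2}, \eqref{renor3} written for $u_n$. First I would record that $U=u-h$ satisfies the structural requirements \eqref{defv}: indeed $T_k(U)\in X$ by weak lower semicontinuity of the norm and \eqref{02041}, and $U\in L^\sigma((0,T);W_0^{1,\sigma}(\Omega))\cap L^\infty((0,T);L^1(\Omega))$ for $\sigma\in[1,m_c)$ by Proposition \ref{mun} and Fatou's lemma applied to the estimates \eqref{alp}, \eqref{gam}. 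So $(f,g,h)$ is an admissible decomposition of $\mu_0$, and it remains to verify (i) and (ii) of Definition \ref{defin}.

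For part (i), I would fix $S\in W^{2,\infty}(\mathbb{R})$ with $S'$ compactly supported, say $\operatorname{supp}S'\subset[-k,k]$, and $\varphi\in X\cap L^\infty(Q)$ with $\varphi_t\in X'+L^1(Q)$, $\varphi(\cdot,T)=0$, and write \eqref{renor} for $u_n$ relative to the decomposition $(f_n+\rho_n^1-\eta_n^1,\,g_n+\rho_n^2-\eta_n^2,\,h_n)$. The term $\int_\Omega S(u_{0,n})\varphi(0)$ converges because $u_{0,n}\to u_0$ in $L^1(\Omega)$ and $S$ is Lipschitz; $\int_Q\varphi_t S(U_n)\to\int_Q\varphi_t S(U)$ since $S(U_n)\to S(U)$ strongly in (say) $L^1(Q)$ and is bounded. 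For the two flux terms, since $S'(U_n)$ and $S''(U_n)$ vanish outside $\{|U_n|\le k\}$, one has $S'(U_n)A(x,t,\nabla u_n)=S'(U_n)A(x,t,\nabla(T_k(U_n)+h_n))$, which by Proposition \ref{mun}(iv) and the a.e.\ convergence \eqref{pp} converges weakly in $(L^{p'}(Q))^N$ to $S'(U)A(x,t,\nabla u)$; multiplying by the fixed $\nabla\varphi\in L^p$ and using that $S'(U_n)\to S'(U)$ a.e.\ and boundedly (Remark \ref{05041}), the term passes to the limit. The term with $S''(U_n)A(x,t,\nabla u_n)\cdot\nabla U_n\,\varphi$ is handled the same way after writing $\nabla U_n=\nabla T_k(U_n)$ on $\{|U_n|\le k\}$ and using that $\nabla T_k(U_n)\to\nabla T_k(U)$ \emph{strongly} in $(L^p(Q))^N$ by \eqref{02041}. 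Finally $\int_Q S'(U_n)\varphi\,d\widehat{\lambda_{n,0}}=\int_Q(f_n+\rho_n^1-\eta_n^1)S'(U_n)\varphi+\int_Q(g_n+\rho_n^2-\eta_n^2)\cdot\nabla(S'(U_n)\varphi)$: the $f_n$ part converges by weak $L^1$ convergence of $f_n$ and a.e.\ bounded convergence of $S'(U_n)\varphi$; the $g_n$ part converges by strong $L^{p'}$ convergence of $g_n$; and the $\rho_n^i,\eta_n^i$ contributions vanish in the limit — this is the delicate point and I would argue it exactly as in the analysis of the terms $A_{12},A_6,A_7$ in Lemmas \ref{april261}--\ref{far}, using that $\rho_n,\eta_n$ concentrate on sets of zero capacity (so that $S'(U_n)\varphi$, being essentially $c_p^Q$-quasicontinuous via the truncation, is controlled against $\rho_n,\eta_n$ through the cut-off functions $\psi_\delta^\pm$ of Proposition \ref{04041}), together with the $L^1$, $(L^{p'})^N$ boundedness of $\rho_n^i,\eta_n^i$ and the narrow convergence $\rho_n\to\mu_s^+$, $\eta_n\to\mu_s^-$.

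For part (ii), I would use the energy convergence already extracted: from \eqref{02041} and the weak convergence $A(x,t,\nabla u_n)\rightharpoonup A(x,t,\nabla u)$, one gets $\int_{\{|U_n|\le k\}}A(x,t,\nabla u_n)\cdot\nabla U_n\to\int_{\{|U|\le k\}}A(x,t,\nabla u)\cdot\nabla U$ for every $k$. To identify the limits in \eqref{renor2}, \eqref{renor3}, fix $\phi\in C(\overline Q)$; choosing suitable test pairs $(S,\varphi)$ with $\varphi$ a time-regularization of $\phi$ (Steklov, Definition \ref{ste}) and $S=\overline{H_m}$, and passing to the limit first in $n$ using all the convergences above, then in $m$, one recovers $\frac1m\int_{\{m\le U<2m\}}\phi A(x,t,\nabla u)\cdot\nabla U\to\int_Q\phi\,d\mu_s^+$; the singular masses on the right come out correctly because the narrow limits of $(\rho_{n,s}-\eta_{n,s})^\pm$ are $\mu_s^+,\mu_s^-$ and the diffuse parts $\rho_{n,0},\eta_{n,0}$ contribute nothing to the $\frac1m\int_{\{m\le|U_n|<2m\}}$ scaling by \eqref{lim3}-type arguments (exactly the computations behind $A_5,A_6$ in Lemma \ref{far} and the Lemma \ref{april261} estimates). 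The main obstacle throughout is the rigorous justification that the ``extra'' diffuse measures $\rho_n,\eta_n$ and the singular measures $\rho_{n,s},\eta_{n,s}$ do not pollute either the renormalized equation in the limit or the singular-mass identities — this is precisely where the double truncation $\psi_{\delta_1}^\pm\psi_{\delta_2}^\pm$ and the careful bookkeeping of Proposition \ref{04041} and Lemmas \ref{april261}, \ref{near}, \ref{far} are indispensable, and the bulk of the work is to assemble those already-proved facts into the verification of Definition \ref{defin} rather than to prove anything genuinely new.
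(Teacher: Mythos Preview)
Your proposal is essentially correct and follows the same architecture as the paper's proof: pass to the limit term by term in \eqref{renor} for $u_n$, using the strong convergence \eqref{02041} for the flux and $S''$ terms, and kill the contributions of $\rho_{n,0},\eta_{n,0}$ via the cut--offs $\psi_\delta^\pm$ of Proposition~\ref{04041}; then verify \eqref{renor2}--\eqref{renor3} by testing with $S=\overline{H_m}$.

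The one point where you diverge from the paper is the organization of part~(ii). The paper does \emph{not} pass to the limit $n\to\infty$ directly in the $S''$--term $\frac{1}{m}\int_{\{m\le|U_n|<2m\}}\phi\,A(x,t,\nabla u_n)\cdot\nabla U_n$; instead it first establishes \eqref{renor} for the limit $u$ (your part~(i)), then writes that identity with $S=\overline{H_m}$ and test function $(1-\psi_\delta^-)\varphi$ to isolate $D_{4,m}=\frac{1}{m}\int_{\{m\le U<2m\}}(1-\psi_\delta^-)\varphi\,A(x,t,\nabla u)\cdot\nabla U$, and separately lets $m\to\infty$ in the $u_n$--equation to obtain a distributional identity $D_1^n+D_2^n=D_3^n+D^n$ involving the full measures $\rho_n,\eta_n$; subtracting the two and sending $n\to\infty$, $m\to\infty$, $\delta\to0$ yields \eqref{renor2}. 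Your direct route (limit in $n$ first at fixed $m$) is also viable thanks to the strong convergence of $T_{2m}(U_n)$ in $X$, but you must be careful that the $\rho_{n,0},\eta_{n,0}$ terms survive the $n$--limit as the narrow limits $\mu_s^\pm$ rather than vanish---this is exactly why the paper inserts the factor $(1-\psi_\delta^-)$ and keeps the full $\rho_n,\eta_n$ (not just $\rho_{n,0},\eta_{n,0}$) in the comparison, a detail your sketch glosses over. Also, no Steklov regularization is needed here: $\varphi\in C_c^\infty(Q)$ suffices at first, with extension to $C(\overline Q)$ by the density--plus--$\psi_\delta^+$ argument the paper gives at the end.
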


\begin{proof}
(i) First show that $u$ satisfies (\ref{renor}). Here we proceed as in
\cite{Pe08}. Let $\varphi\in X\cap L^{\infty}(Q)$ such $\varphi_{t}\in
X^{\prime}+L^{1}(Q),$ $\varphi(.,T)=0,$ and $S\in W^{2,\infty}(\mathbb{R})$,
such that $S^{\prime}$ has compact support on $\mathbb{R}$, $S(0)=0$. Let
$M>0$ such that supp$S^{\prime}\subset\lbrack-M,M]$. Taking successively
$(\varphi,S)$ and $(\varphi\psi_{\delta}^{\pm},S)$ as test functions in
(\ref{renor}) applied to $u_{n}$, we can write
\[
A_{1}+A_{2}+A_{3}+A_{4}=A_{5}+A_{6}+A_{7},\qquad A_{2,\delta,\pm}%
+A_{3,\delta,\pm}+A_{4,\delta,\pm}=A_{5,\delta,\pm}+A_{6,\delta,\pm
}+A_{7,\delta,\pm},
\]
where
\begin{align*}
A_{1} &  =-\int_{\Omega}{\varphi(0)S({u_{0,n}})dx,\quad}A_{2}=-\int%
_{Q}{{\varphi_{t}}S({U_{n}}),\quad}A_{2,\delta,\pm}=-\int_{Q}(\varphi
\psi_{\delta}^{\pm}){{_{t}}S({U_{n}}),}\\
A_{3} &  =\int_{Q}{S^{\prime}({U_{n}})A(x,t,\nabla u_{n}).\nabla\varphi,\quad
}A_{3,\delta,\pm}=\int_{Q}{S^{\prime}({U_{n}})A(x,t,\nabla u_{n}%
).\nabla(\varphi\psi_{\delta}^{\pm}),}%
\end{align*}%
\begin{align*}
A_{4} &  =\int_{Q}{S^{\prime\prime}({U_{n}})\varphi A(x,t,\nabla u_{n}).\nabla
U_{n},\quad}A_{4,\delta,\pm}=\int_{Q}{S^{\prime\prime}({U_{n}})\varphi
\psi_{\delta}^{\pm}A(x,t,\nabla u_{n}).\nabla U_{n},}\\
A_{5} &  =\int_{Q}{S^{\prime}(}U{{_{n}})\varphi d{\widehat{\lambda_{n,0}%
},\quad A}}_{6}=\int_{Q}{S^{\prime}({U_{n}})\varphi d{\rho_{n,0},}\quad}%
A_{7}=-\int_{Q}{S^{\prime}(}U{{_{n}})\varphi d{\eta_{n,0},}}\\
A_{5,\delta,\pm} &  =\int_{Q}{S^{\prime}(}U{{_{n}})\varphi\psi_{\delta}^{\pm
}d{\widehat{\lambda_{n,0}},{\quad}A}}_{6,\delta,\pm}=\int_{Q}{S^{\prime
}({U_{n}})\varphi\psi_{\delta}^{\pm}d{\rho_{n,0},}\quad}A_{7,\delta,\pm}%
=-\int_{Q}{S^{\prime}(}U{{_{n}})\varphi\psi_{\delta}^{\pm}d{\eta_{n,0}.}}%
\end{align*}

\noindent\ Since $\left\{  u_{0,n}\right\}  $ converges to $u_{0}$ in
$L^{1}(\Omega),$ and $\left\{  S({U_{n}})\right\}  $ converges to $S(U),$
strongly in $X$ and weak-$^{\ast}$ in $L^{\infty}(Q),$ there holds, from
(\ref{12054}),
\[
A_{1}=-\int_{\Omega}{\varphi(0)S({u_{0}})dx}+\omega(n),\quad A_{2}=-\int%
_{Q}{{\varphi_{t}}S(U)}+\omega(n),\quad A_{2,\delta,\psi_{\delta}^{\pm}%
}=\omega(n,\delta).
\]
\newline Moreover $T_{M}(${$U$}${_{n}})$ converges to $T_{M}(U)$, then
$T_{M}(${$U$}${_{n}})+h_{n}$ converges to $T_{k}(U)+h$ strongly in $X$, thus%
\begin{align*}
A_{3} &  =\int_{Q}{S^{\prime}({U_{n}})A(x,t,\nabla\left(  {{T_{M}}\left(
{{U_{n}}}\right)  +{h_{n}}}\right)  ).\nabla\varphi}=\int_{Q}{S^{\prime
}(U)A(x,t,\nabla\left(  {{T_{M}}\left(  {{U}}\right)  +{h}}\right)
).\nabla\varphi}+\omega(n)\\
&  =\int_{Q}{S^{\prime}(U)A(x,t,\nabla u).\nabla\varphi}+\omega(n);
\end{align*}
and
\begin{align*}
A_{4} &  =\int_{Q}{S^{\prime\prime}({U_{n}})\varphi A(x,t,\nabla\left(
{{T_{M}}\left(  {{U_{n}}}\right)  +{h_{n}}}\right)  ).\nabla{T_{M}}\left(
{{U_{n}}}\right)  }\\
&  =\int_{Q}{S^{\prime\prime}(U)\varphi A(x,t,\nabla\left(  {{T_{M}}\left(
{{U}}\right)  +{h}}\right)  ).\nabla{T_{M}}\left(  U\right)  }+\omega
(n)=\int_{Q}{S^{\prime\prime}(U)\varphi A(x,t,\nabla u).\nabla U}+\omega(n).
\end{align*}
In the same way, since ${\psi_{\delta}^{\pm}}$ converges to $0$ in $X,$
\begin{align*}
A_{3,\delta,\pm} &  =\int_{Q}{S^{\prime}(U)A(x,t,\nabla u).\nabla(\varphi
\psi_{\delta}^{\pm})}+\omega(n)=\omega(n,\delta),\\
A_{4,\delta,\pm} &  =\int_{Q}{S^{\prime\prime}(U)\varphi\psi_{\delta}^{\pm
}A(x,t,\nabla u).\nabla U}+\omega(n)=\omega(n,\delta).
\end{align*}
And $\left\{  {{{g_{n}}}}\right\}  $ strongly converges to $g$ in
$(L^{p^{\prime}}(\Omega))^{N},$ thus
\begin{align*}
A_{5} &  =\int_{Q}{{S^{\prime}({U_{n}})\varphi{f_{n}}}+}\int_{Q}{{S^{\prime
}({U_{n}}){g_{n}.}\nabla\varphi}+}\int_{Q}{{S}}^{\prime\prime}{{({U_{n}%
})\varphi{g_{n}.}\nabla{T_{M}}({U_{n}})}}\\
&  =\int_{Q}{{S^{\prime}(U)\varphi f}+}\int_{Q}{{S^{\prime}(U)g.\nabla\varphi
}+}\int_{Q}{{S}}^{\prime\prime}{{(U)\varphi g.\nabla{T_{M}}(U)}+\omega(n)}\\
&  {=}\int_{Q}{{S^{\prime}(U)\varphi}d{\widehat{\mu_{0}}}+\omega(n).}%
\end{align*}
Now $A_{5,\delta,\pm}{=}\int_{Q}{{S^{\prime}(}}${{$U$}}${{)\varphi}%
\psi_{\delta}^{\pm}d{\widehat{\lambda_{n,0}}}+\omega(n)=}\omega(n,\delta).$
Then $A_{6,\delta,\pm}+A_{7,\delta,\pm}=\omega(n,\delta)$. From (\ref{12054})
we verify that $A_{7,\delta,+}=\omega(n,\delta)$ and $A_{6,\delta,-}%
=\omega(n,\delta)$. Moreover, from (\ref{muno}) and (\ref{12054}), we find
\[
\left\vert A_{6}-A_{6,\delta,+}\right\vert \leq\int_{Q}\left\vert {S^{\prime
}({U_{n}})\varphi}\right\vert {(1-\psi_{\delta}^{+})d{\rho_{n,0}}}%
\leq{\left\Vert S\right\Vert _{{W^{2,\infty}}(\mathbb{R})}}{\left\Vert
\varphi\right\Vert _{{L^{\infty}}({Q})}}\int_{Q}{(1-\psi_{\delta}^{+}%
)d{\rho_{n}}}=\omega(n,\delta).
\]
Similarly we also have $\left\vert A_{7}-A_{7,\delta,-}\right\vert \leq
\omega(n,\delta)$. Hence $A_{6}=\omega(n)$ and $A_{7}=\omega(n).$ Therefore,
we finally obtain (\ref{renor}):
\begin{equation}
-\int_{\Omega}{\varphi(0)S({u_{0}})dx}-\int_{Q}{{\varphi_{t}}S(U)+}\int%
_{Q}{S^{\prime}(U)A(x,t,\nabla u).\nabla\varphi+}\int_{Q}{S^{\prime\prime
}(U)\varphi A(x,t,\nabla u).\nabla U=}\int_{Q}{{S^{\prime}(U)\varphi
}d{\widehat{\mu_{0}}.}}\label{renor5}%
\end{equation}
\medskip

(ii) Next, we prove (\ref{renor2}) and (\ref{renor3}). We take $\varphi\in
C_{c}^{\infty}(Q)$ and take $({(1-\psi_{\delta}^{-})\varphi,}\overline{H_{m}%
})$ as test functions in (\ref{renor5}), with $\overline{H_{m}}$ as in
(\ref{Hm}). We can write $D_{1,m}+D_{2,m}=D_{3,m}+D_{4,m}+D_{5,m},$ where
\begin{equation}%
\begin{array}
[c]{l}%
D_{1,m}=-\int\limits_{Q}{{{\left(  {(1-\psi_{\delta}^{-})\varphi}\right)
}_{t}}\overline{H_{m}}(U)},\quad\quad D_{2,m}=\int\limits_{Q}H_{m}%
{(}U{)A(x,t,\nabla u).\nabla\left(  {(1-\psi_{\delta}^{-})\varphi}\right)
},\\
\\
D_{3,m}=\int\limits_{Q}H_{m}{(}U{)(1-\psi_{\delta}^{-})\varphi d{\widehat{\mu
_{0}},}}\quad\quad D_{4,m}=\frac{1}{m}\int\limits_{m\leq U\leq2m}%
{(1-\psi_{\delta}^{-})\varphi{A(x,t,\nabla u).\nabla{U},}}\\
\\
D_{5,m}=-\frac{1}{m}\int\limits_{-2m\leq U\leq-m}{(1-\psi_{\delta}^{-}%
)\varphi{A(x,t,\nabla u)\nabla U.}}%
\end{array}
\label{april281}%
\end{equation}
Taking the same test functions in (\ref{renor}) applied to $u_{n},$ there
holds $D_{1,m}^{n}+D_{2,m}^{n}=D_{3,m}^{n}+D_{4,m}^{n}+D_{5,m}^{n}$, where%
\begin{equation}%
\begin{array}
[c]{l}%
D_{1,m}^{n}=-\int\limits_{Q}{{{\left(  {(1-\psi_{\delta}^{-})\varphi}\right)
}_{t}}\overline{H_{m}}(U}_{n}{)},\quad\quad\quad D_{2,m}^{n}=\int%
\limits_{Q}H_{m}({U}_{n})A(x,t,\nabla u_{n}).\nabla\left(  {(1-\psi_{\delta
}^{-})\varphi}\right)  ,\\
\\
D_{3,m}^{n}=\int\limits_{Q}H_{m}({U}_{n})(1-\psi_{\delta}^{-})\varphi
d(\widehat{\lambda_{n,0}}+\rho_{n,0}-\eta_{n,0}){{,}}\quad D_{4,m}^{n}%
=\frac{1}{m}\int\limits_{m\leq U\leq2m}{(1-\psi_{\delta}^{-})\varphi
A(x,t,\nabla u_{n}).{\nabla{U}}}_{n}{{,}}\\
\\
D_{5,m}^{n}=-\frac{1}{m}\int\limits_{-2m\leq{U}_{n}\leq-m}(1-\psi_{\delta}%
^{-})\varphi A(x,t,\nabla u_{n}).{{\nabla{U}_{n}}}%
\end{array}
\label{281n}%
\end{equation}
In (\ref{281n}), we go to the limit as $m\rightarrow\infty$. Since $\left\{
\overline{H}_{m}(U_{n})\right\}  $ converges to $U_{n}$ and $\left\{
H_{m}(U_{n})\right\}  $ converges to $1,$ $a.e.$ in $Q,$ and $\left\{  \nabla
H_{m}(U_{n})\right\}  $ converges to $0,$ weakly in $(L^{p}(Q))^{N}$ , we
obtain the relation $D_{1}^{n}+D_{2}^{n}=D_{3}^{n}+D^{n},$ where%
\begin{align*}
D_{1}^{n}  &  =-\int_{Q}{{\left(  {(1-\psi_{\delta}^{-})\varphi}\right)  }%
_{t}U}_{n},\quad D_{2}^{n}=\int_{Q}A(x,t,\nabla u_{n})\nabla\left(
{(1-\psi_{\delta}^{-})\varphi}\right)  ,\quad D_{3}^{n}=\int_{Q}%
{(1-\psi_{\delta}^{-})\varphi d\widehat{\lambda_{n,0}}}\\
D^{n}  &  =\int_{Q}{(1-\psi_{\delta}^{-})\varphi d(\rho_{n,0}-\eta_{n,0}%
)+}\int_{Q}{(1-\psi_{\delta}^{-})\varphi d(({{\rho}}_{n,s}-\eta_{n,s})}%
^{+}-{({{\rho}}_{n,s}-\eta_{n,s})}^{-})\\
&  =\int_{Q}{(1-\psi_{\delta}^{-})\varphi d({{\rho}}_{n}-\eta_{n}).}%
\end{align*}
Clearly, $D{_{i,m}}-D{_{i}^{n}}=\omega(n,m)$ for $i=1,2,3.$ From Lemma
(\ref{april261}) and (\ref{12054})-(\ref{12057}), we obtain $D_{5,m}%
=\omega(n,m,\delta),$ and
\[
{\frac{1}{m}\int\limits_{\left\{  m\leq U<2m\right\}  }\psi_{\delta}%
^{-}\varphi A(x,t,\nabla u).\nabla{U}}=\omega(n,m,\delta),
\]
thus,
\[
D_{4,m}=\frac{1}{m}{\int\limits_{\left\{  m\leq U<2m\right\}  }\varphi
A(x,t,\nabla u).\nabla U}+\omega(n,m,\delta).
\]
Since $\left\vert \int_{Q}{(1-\psi_{\delta}^{-})\varphi d\eta_{n}}\right\vert
\leq{\left\Vert \varphi\right\Vert _{{L^{\infty}}}}\int_{Q}{(1-\psi_{\delta
}^{-})d\eta_{n},}$ it follows that $\int_{Q}{(1-\psi_{\delta}^{-})\varphi
d\eta_{n}}=\omega(n,m,\delta)$ from (\ref{12057})$.$ And $\left\vert \int%
_{Q}{{\psi_{\delta}^{-}\varphi d\rho_{n}}}\right\vert \leq{\left\Vert
\varphi\right\Vert _{{L^{\infty}}}}\int_{Q}{\psi_{\delta}^{-}d\rho_{n},}$
thus, from (\ref{12054}), $\int_{Q}{(1-\psi_{\delta}^{-})\varphi d{{\rho}}%
_{n}}=\int_{Q}{\varphi d\mu_{s}^{+}}+\omega(n,m,\delta).$ Then $D^{n}=\int%
_{Q}{\varphi d\mu_{s}^{+}}+\omega(n,m,\delta).$ Therefore by subtraction, we
get successively
\[
\frac{1}{m}{\int\limits_{\left\{  m\leq U<2m\right\}  }\varphi A(x,t,\nabla
u).\nabla U}=\int_{Q}{\varphi d\mu_{s}^{+}}+\omega(n,m,\delta),
\]%
\begin{equation}
\lim_{m\rightarrow\infty}\frac{1}{m}\int\limits_{\left\{  m\leq U<2m\right\}
}\varphi A(x,t,\nabla u).\nabla U=\int_{Q}{\varphi d\mu_{s}^{+},} \label{en}%
\end{equation}
which proves (\ref{renor2}) when $\varphi\in C_{c}^{\infty}(Q).$ Next assume
only $\varphi\in C^{\infty}(\overline{Q})$. Then
\[%
\begin{array}
[c]{l}%
\lim_{m\rightarrow\infty}\frac{1}{m}{\int\limits_{\left\{  m\leq U<2m\right\}
}\varphi A(x,t,\nabla u).\nabla U}\\
\\
=\lim_{m\rightarrow\infty}\frac{1}{m}{\int\limits_{\left\{  m\leq
U<2m\right\}  }\varphi\psi_{\delta}^{+}A(x,t,\nabla u)\nabla U}+\lim
_{m\rightarrow\infty}\frac{1}{m}{\int\limits_{\left\{  m\leq U<2m\right\}
}\varphi(1-\psi_{\delta}^{+})A(x,t,\nabla u).\nabla U}\\
\\
=\int_{Q}{\varphi\psi_{\delta}^{+}d\mu_{s}^{+}}+\lim_{m\rightarrow\infty}%
\frac{1}{m}{\int\limits_{\left\{  m\leq U<2m\right\}  }\varphi(1-\psi_{\delta
}^{+})A(x,t,\nabla u).\nabla U}=\int_{Q}{\varphi d\mu_{s}^{+}}+D,
\end{array}
\]
where
\[
D=\int_{Q}{\varphi(1-\psi_{\delta}^{+})d\mu_{s}^{+}}+\lim_{n\rightarrow\infty
}\frac{1}{m}{\int\limits_{\left\{  m\leq U<2m\right\}  }\varphi(1-\psi
_{\delta}^{+})A(x,t,\nabla u).\nabla U=\;}\omega(\delta).
\]
Therefore, (\ref{en}) still holds for $\varphi\in C^{\infty}(\overline{Q}),$
and we deduce (\ref{renor2}) by density, and similarly, (\ref{renor3}). This
completes the proof of Theorem \ref{sta}.
\end{proof}

\section{Approximations of measures\label{prox}}

Corollary \ref{051120131} is a direct consequence of Theorem \ref{sta} and the
following approximation property:

\begin{proposition}
\label{4bhatt}Let $\mu=\mu_{0}+\mu_{s}\in\mathcal{M}_{b}^{+}(Q)$ with $\mu
_{0}\in\mathcal{M}_{0}^{+}(Q)$ and $\mu_{s}\in\mathcal{M}_{s}^{+}(Q).$

\noindent(i) Then, we can find a decomposition $\mu_{0}=(f,g,h)$ with $f\in
L^{1}(Q),g\in(L^{p^{\prime}}(Q))^{N},h\in X$ such that
\begin{equation}
||f||_{1,Q}+\left\Vert g\right\Vert _{p^{\prime},Q}+||h||_{X}+\mu_{s}%
(\Omega)\leq2\mu(Q)\label{4bhdeco}%
\end{equation}
(ii) Furthermore, there exists sequences of measures $\mu_{0,n}=(f_{n}%
,g_{n},h_{n}),\mu_{s,n}$ such that $f_{n},g_{n},h_{n}\in C_{c}^{\infty}(Q)$
strongly converge to $f,g,h$ in $L^{1}(Q),(L^{p^{\prime}}(Q))^{N}$ and $X$
respectively, and $\mu_{s,n}\in(C_{c}^{\infty}(Q))^{+}$ converges to $\mu_{s}$
and $\mu_{n}:=\mu_{0,n}+\mu_{s,n}$ converges to $\mu$ in the narrow topology,
and satisfying $|\mu_{n}|(Q)\leq\mu(Q),$
\begin{equation}
||f_{n}||_{1,Q}+\left\Vert g_{n}\right\Vert _{p^{\prime},Q}+||h_{n}||_{X}%
+\mu_{s,n}(Q)\leq2\mu(Q).\label{4bhdecn}%
\end{equation}

\end{proposition}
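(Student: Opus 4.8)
The heart of the matter is a quantitative form of the decomposition (\ref{dec}): \emph{$\mu_{0}$ admits a decomposition $(f,g,h)$ with $\|f\|_{1,Q}+\|g\|_{p',Q}+\|h\|_{X}$ of order $\mu_{0}(Q)$.} Once this is established it gives (i) at once, and (ii) is then obtained by a diagonal regularization of such a decomposition and, separately, of $\mu_{s}$. (Alternatively, (i) follows a posteriori from (ii) by lower semicontinuity of the norms and of the total variation under the stated convergences, together with the fact that the distributional limit of $f_{n}-\operatorname{div}g_{n}+(h_{n})_{t}$ must equal $\mu_{0}$.)

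\medskip
\textbf{Quantitative decomposition.} On the set of decompositions of a diffuse bounded measure $\nu$, nonempty by (\ref{dec}) and \cite{DrPoPr}, put
\[
\|\nu\|_{\ast}:=\inf\Big\{\|f\|_{1,Q}+\|g\|_{p',Q}+\|h\|_{X}\,:\ \nu=f-\operatorname{div}g+h_{t}\text{ in }\mathcal D'(Q)\Big\}.
\]
Since decompositions add, $\|\cdot\|_{\ast}$ is a seminorm on $\mathcal M_{b}(Q)\cap\mathcal M_{0}(Q)$. For $\theta$ slightly less than $1$ let $\mu_{0}^{\theta}$ be the push-forward of $\mu_{0}$ under a diffeomorphism $T_{\theta}$ of $\mathbb R^{N+1}$ mapping $\overline Q$ into $Q$ and tending to the identity as $\theta\to1$; then $\mu_{0}^{\theta}\ge0$, $\mu_{0}^{\theta}(Q)=\mu_{0}(Q)$, $\operatorname{supp}\mu_{0}^{\theta}\Subset Q$, and $\mu_{0}^{\theta}\to\mu_{0}$ narrowly. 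Mollifying with parameter $\varepsilon<\operatorname{dist}(\operatorname{supp}\mu_{0}^{\theta},\partial Q)$ gives $\lambda_{\theta,\varepsilon}:=\mu_{0}^{\theta}\ast\rho_{\varepsilon}\in C_{c}^{\infty}(Q)$, $\lambda_{\theta,\varepsilon}\ge0$, $\|\lambda_{\theta,\varepsilon}\|_{1,Q}=\mu_{0}(Q)$, whence $\|\lambda_{\theta,\varepsilon}\|_{\ast}\le\mu_{0}(Q)$ (take $f=\lambda_{\theta,\varepsilon}$, $g=h=0$). On the other hand, transporting by $T_{\theta}$ and then mollifying one fixed decomposition of $\mu_{0}$, whose norm we do not control, shows that $\|\lambda_{\theta,\varepsilon}-\mu_{0}\|_{\ast}\to0$ along a suitable sequence $(\theta,\varepsilon)\to(1,0)$; by the triangle inequality $\|\mu_{0}\|_{\ast}\le\mu_{0}(Q)$. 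So for every $\delta>0$ there is a decomposition $\mu_{0}=(f,g,h)$ with $\|f\|_{1,Q}+\|g\|_{p',Q}+\|h\|_{X}\le\mu_{0}(Q)+\delta$, which gives (i).

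\medskip
\textbf{Smooth approximation.} Fix such a decomposition with $\delta\le\tfrac12\mu_{0}(Q)$ (if $\mu_{0}=0$ take $f=g=h=0$). Push it forward by $T_{\theta}$: one gets coefficients $f^{\theta},g^{\theta},h^{\theta}$, supported in $T_{\theta}(\overline Q)\Subset Q$, with $\mu_{0}^{\theta}=f^{\theta}-\operatorname{div}g^{\theta}+(h^{\theta})_{t}$ on $\mathbb R^{N+1}$ and, as $\theta\to1$, $f^{\theta}\to f$ in $L^{1}(Q)$, $g^{\theta}\to g$ in $(L^{p'}(Q))^{N}$, $h^{\theta}\to h$ in $X$, the corresponding norms differing from the original ones only by Jacobian factors that tend to $1$. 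Mollifying, $f^{\theta}\ast\rho_{\varepsilon},g^{\theta}\ast\rho_{\varepsilon},h^{\theta}\ast\rho_{\varepsilon}\in C_{c}^{\infty}(Q)$ converge, for fixed $\theta$ as $\varepsilon\to0$, to $f^{\theta},g^{\theta},h^{\theta}$, and, mollification commuting with $\operatorname{div}$ and $\partial_{t}$, $\mu_{0}^{\theta}\ast\rho_{\varepsilon}=f^{\theta}\ast\rho_{\varepsilon}-\operatorname{div}(g^{\theta}\ast\rho_{\varepsilon})+(h^{\theta}\ast\rho_{\varepsilon})_{t}$; treat $\mu_{s}$ likewise, obtaining $\mu_{s}^{\theta}\ast\rho_{\varepsilon}\in(C_{c}^{\infty}(Q))^{+}$ of mass $\mu_{s}(Q)$, narrowly close to $\mu_{s}$. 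A diagonal choice $\theta_{n}\uparrow1$, $\varepsilon_{n}\downarrow0$ then defines $f_{n}:=f^{\theta_{n}}\ast\rho_{\varepsilon_{n}}$, etc., $\mu_{0,n}:=\mu_{0}^{\theta_{n}}\ast\rho_{\varepsilon_{n}}=(f_{n},g_{n},h_{n})$, $\mu_{s,n}:=\mu_{s}^{\theta_{n}}\ast\rho_{\varepsilon_{n}}$, $\mu_{n}:=\mu_{0,n}+\mu_{s,n}$, with: $f_{n}\to f$, $g_{n}\to g$, $h_{n}\to h$ strongly; $\mu_{0,n}\ge0$, $\|\mu_{0,n}\|_{1,Q}=\mu_{0}(Q)$, hence $|\mu_{n}|(Q)=\mu_{0}(Q)+\mu_{s}(Q)=\mu(Q)$; $\mu_{0,n}\to\mu_{0}$ and $\mu_{s,n}\to\mu_{s}$ narrowly, since positive measures converging weak-$\ast$ with equal---hence convergent---total masses converge narrowly, so $\mu_{n}\to\mu$ narrowly; and $\|f_{n}\|_{1,Q}+\|g_{n}\|_{p',Q}+\|h_{n}\|_{X}+\mu_{s,n}(Q)\to\|f\|_{1,Q}+\|g\|_{p',Q}+\|h\|_{X}+\mu_{s}(Q)\le\tfrac32\mu_{0}(Q)+\mu_{s}(Q)<2\mu(Q)$, so (\ref{4bhdecn}) holds for $n$ large (then relabel). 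This proves (ii).

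\medskip
\textbf{Main difficulty.} The ``soft'' ingredients---smoothness of the approximants, and the upgrade of weak-$\ast$ to narrow convergence via conservation of mass---are routine once the supports have been pushed to be compact inside $Q$. The real point is the quantitative bound, i.e. that the decomposition cost $\|\mu_{0}\|_{\ast}$ is controlled by the mass $\mu_{0}(Q)$: this is what forces the two-step structure, because one cannot keep the approximations $\mu_{0,n}$ inside $L^{1}$ (their $L^{1}$-limit is the \emph{measure} $\mu_{0}$, not an $L^{1}$ function), so the part of $\mu_{0}$ responsible for concentration must be routed through the $g$- and $h$-channels, and controlling this uses in an essential way the a priori existence of \emph{some} decomposition of a diffuse measure, from \cite{DrPoPr}. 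The only other point needing care is the construction, for an arbitrary bounded domain $\Omega$, of the contracting maps $T_{\theta}$ with controlled distortion, handled as usual by a partition of unity and local dilations.
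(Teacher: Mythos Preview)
Your argument and the paper's share exactly the same core trick: approximate $\mu_{0}$ by a smooth nonnegative function $\lambda$ with $\|\lambda\|_{1,Q}=\mu_{0}(Q)$, and control the remainder $\mu_{0}-\lambda$ by subtracting a regularized version of one fixed (uncontrolled) decomposition of $\mu_{0}$ from the original one. The difference lies in how the reduction to compact support is performed. The paper first treats compactly supported $\mu_{0}$ (Step~1: mollify a compactly supported decomposition coming from \cite{DrPoPr}, take $\hat f=\mu_{0,n_{0}}+(f-f_{n_{0}})$), and then passes to the general case by the telescopic sum $\mu_{0}=\sum_{n}(\theta_{n}-\theta_{n-1})\mu_{0}$ over an exhaustion $\theta_{n}\in C_{c}^{\infty}(Q)$, applying Step~1 to each piece. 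You instead push $\mu_{0}$ (and its decomposition) forward by a single contracting diffeomorphism $T_{\theta}$ of $\mathbb R^{N+1}$ with $T_{\theta}(\overline Q)\Subset Q$, which is conceptually cleaner and, since your seminorm argument yields $\|\mu_{0}\|_{\ast}\le\mu_{0}(Q)$, would even give a sharper constant than the paper's $\tfrac32$.

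There is, however, a genuine gap in the generality claimed by the paper: $\Omega$ is an \emph{arbitrary} bounded domain, with no regularity on $\partial\Omega$. For such $\Omega$ the existence of a family of diffeomorphisms $S_{\theta}:\mathbb R^{N}\to\mathbb R^{N}$ with $S_{\theta}(\overline\Omega)\Subset\Omega$ and $S_{\theta}\to\mathrm{id}$ in $C^{1}$ is not standard, and your remark that it is ``handled as usual by a partition of unity and local dilations'' does not close the argument. Gluing local dilations via a partition of unity $\{\chi_{i}\}$ does not produce a global diffeomorphism; and if instead you split the measure as $\sum_{i}\chi_{i}\mu_{0}$ and treat each piece, the decomposition of $\chi_{i}\mu_{0}$ acquires the extra $L^{1}$ terms $g\cdot\nabla\chi_{i}$ and $h\,(\chi_{i})_{t}$, whose sum over $i$ you have not bounded---this is precisely the obstruction the paper's telescopic device (with \emph{compactly supported} decompositions of each $(\theta_{n}-\theta_{n-1})\mu$ coming from \cite{DrPoPr}) is designed to avoid. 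For star-shaped or Lipschitz $\Omega$ your proof goes through; for the general bounded domain stated here, the construction of $T_{\theta}$ needs an independent argument or you should revert to the cutoff exhaustion.
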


\begin{proof}
(i) Step 1. Case where $\mu$ has a compact support in $Q.$ By \cite{DrPoPr},
we can find a decomposition $\mu_{0}=(f,g,h)$ with $f,g,h$ have a compact
support in $Q.$ Let $\left\{  \varphi_{n}\right\}  $ be sequence of mollifiers
in $\mathbb{R}^{N+1}$. Then $\mu_{0,n}=\varphi_{n}\ast\mu_{0}\in C_{c}%
^{\infty}(Q)$ for $n$ large enough. We see that $\mu_{0,n}(Q)=\mu_{0}(Q)$ and
$\mu_{0,n}$ admits the decomposition $\mu_{0,n}=(f_{n},g_{n},h_{n}%
)=(\varphi_{n}\ast f,\varphi_{n}\ast g,\varphi_{n}\ast h)$. Since $\left\{
f_{n}\right\}  ,\left\{  g_{n}\right\}  ,\left\{  h_{n}\right\}  $ strongly
converge to $f,g,h$ in $L^{1}(Q),(L^{p^{\prime}}(Q))^{N}$ and $X$
respectively, we have for $n_{0}$ large enough,
\[
||f-f_{n_{0}}||_{1,Q}+||g-g_{n_{0}}||_{p^{\prime},Q}+||h-h_{n_{0}}%
||_{L^{p}((0,T);W_{0}^{1,p}(\Omega))}\leq\frac{1}{2}\mu_{0}(Q).
\]
Then we obtain a decomposition $\mu=(\hat{f},\hat{g},\hat{h})=(\mu_{n_{0}%
}+f-f_{n_{0}},g-g_{n_{0}},h-h_{n_{0}}),$ such that
\begin{equation}
||\hat{f}||_{1,Q}+||\hat{g}||_{p^{\prime},Q}+||\hat{h}||_{X}+\mu_{s}%
(Q)\leq\frac{3}{2}\mu(Q)\label{4bhcomp}%
\end{equation}
Step 2. General case. Let $\{\theta_{n}\}$ be a nonnegative, nondecreasing
sequence in $C_{c}^{\infty}(Q)$ which converges to $1,$ $a.e.$ in $Q$. Set
${\tilde{\mu}_{0}}={\theta_{0}\mu,}$ and ${\tilde{\mu}_{n}}=(\theta_{n}%
-\theta_{n-1})\mu,$ for any $n\geq1$. Since $\tilde{\mu}_{n}=\tilde{\mu}%
_{0,n}+\tilde{\mu}_{s,n}\in\mathcal{M}_{0}(Q)\cap\mathcal{M}_{b}^{+}(Q)$ has
compact support with $\tilde{\mu}_{0,n}\in\mathcal{M}_{0}(Q),\tilde{\mu}%
_{s,n}\in\mathcal{M}_{s}(Q)$, by Step 1, we can find a decomposition
$\tilde{\mu}_{0,n}=(\tilde{f}_{n},\tilde{g}_{n},\tilde{h}_{n})$ such that
\[
||\tilde{f}_{n}||_{1,Q}+\left\Vert \tilde{g}_{n}\right\Vert _{p^{\prime}%
,Q}+||\tilde{h}_{n}||_{X}+\tilde{\mu}_{s,n}(\Omega)\leq\frac{3}{2}\tilde{\mu
}_{n}(Q).
\]
Let $\overline{f}_{n}=\sum\limits_{k=0}^{n}{\tilde{f}}_{k}$, $\overline{g}%
_{n}=\sum\limits_{k=0}^{n}\tilde{g}_{k}$, $\bar{h}_{n}=\sum\limits_{k=0}%
^{n}\tilde{h}_{k}$ and $\bar{\mu}_{s,n}=\sum_{k=0}^{n}\tilde{\mu}_{s,k}$.
Clearly, $\theta_{n}\mu_{0}=(\overline{f}_{n},\overline{g}_{n},\bar{h}_{n}),$
$\theta_{n}\mu_{s}=\bar{\mu}_{s,n}$ and $\left\{  \overline{f}_{n}\right\}
,\left\{  \overline{g}_{n}\right\}  ,\left\{  \bar{h}_{n}\right\}  $ and
$\left\{  \bar{\mu}_{s,n}\right\}  $ converge strongly to some $f,g,h,$ and
$\mu_{s}$ respectively in $L^{1}(Q)$,$(L^{p^{\prime}}(Q))^{N}$, $X$  and
$\mathcal{M}_{b}^{+}(Q),$ and
\[
||\overline{f}_{n}||_{1,Q}+||\overline{g}_{n}||_{p^{\prime},Q}+||\bar{h}%
_{n}||_{X}+\bar{\mu}_{s,n}(Q)\leq\frac{3}{2}\mu(Q).
\]
Therefore, $\mu_{0}=(f,g,h)$, and (\ref{4bhdeco}) holds.\medskip

(ii) We take a sequence $\{m_{n}\}$ in $\mathbb{N}$ such that $f_{n}%
=\varphi_{m_{n}}\ast\overline{f}_{n},g_{n}=\varphi_{m_{n}}\ast\overline{g}%
_{n},h_{n}=\varphi_{m_{n}}\ast\bar{h}_{n},\varphi_{m_{n}}\ast\bar{\mu}%
_{s,n}\in(C_{c}^{\infty}(Q))^{+},$ $\int_{Q}\varphi_{m_{n}}\ast\bar{\mu}%
_{s,n}dxdt=\bar{\mu}_{s,n}(Q)$ and
\[
||f_{n}-\overline{f}_{n}||_{1,Q}+||g_{n}-\overline{g}_{n}||_{p^{\prime}%
,Q}+||h_{n}-\bar{h}_{n}||_{X}\leq\frac{1}{n+2}\mu(Q).
\]
Let $\mu_{0,n}=\varphi_{m_{n}}\ast(\theta_{n}\mu_{0})=(f_{n},g_{n},h_{n})$,
$\mu_{s,n}=\varphi_{m_{n}}\ast\bar{\mu}_{s,n}$ and $\mu_{n}=\mu_{0,n}%
+\mu_{s,n}$. Therefore, $\left\{  f_{n}\right\}  ,\left\{  g_{n}\right\}
,\left\{  h_{n}\right\}  $ strongly converge to $f,g,h$ in $L^{1}%
(Q),(L^{p^{\prime}}(Q))^{N}$ and $X$ respectively. And (\ref{4bhdecn}) holds.
Furthermore, $\left\{  \mu_{s,n}\right\}  ,\left\{  \mu_{n}\right\}  $
converge to $\mu_{s},\mu$ in the weak topology of measures$,$ and $\mu
_{s,n}(Q)=\int_{Q}\theta_{n}d\mu_{s},\mu_{n}(Q)=\int_{Q}\theta_{n}d\mu$
converges to $\mu_{s}(Q),\mu(Q)$, thus $\left\{  \mu_{s,n}\right\}  ,\left\{
\mu_{n}\right\}  $ converges to $\mu_{s},\mu$ in the narrow topology and
$|\mu_{n}|(Q)\leq\mu(Q)$.\medskip
\end{proof}

Observe that part (i) of Proposition \ref{4bhatt} was used in \cite{Pe08},
even if there was no explicit proof. Otherwise part (ii) is a \textit{ key
point} for finding applications to the stability Theorem. Note also a very
useful consequence for approximations by \textit{nondecreasing} sequences:

\begin{proposition}
\label{P5} Let $\mu\in\mathcal{M}_{b}^{+}(Q)$ and $\varepsilon>0$. Let
$\left\{  \mu_{n}\right\}  $ be a nondecreasing sequence in $\mathcal{M}%
_{b}^{+}(Q)$ converging to $\mu$ in $\mathcal{M}_{b}(Q)$. Then, there exist
$f_{n},f\in L^{1}(Q)$, $g_{n},g\in(L^{p^{\prime}}(Q))^{N}$ and $h_{n},h\in X,$
$\mu_{n,s},\mu_{s}\in\mathcal{M}_{s}^{+}(Q)$ such that
\[
\mu=f-\operatorname{div}g+h_{t}+\mu_{s},\qquad\mu_{n}=f_{n}-\operatorname{div}%
g_{n}+(h_{n})_{t}+\mu_{n,s},
\]
and $\left\{  f_{n}\right\}  ,\left\{  g_{n}\right\}  ,\left\{  h_{n}\right\}
$ strongly converge to $f,g,h$ in $L^{1}(Q),(L^{p^{\prime}}(Q))^{N}$ and $X$
respectively, and $\left\{  \mu_{n,s}\right\}  $ converges to $\mu_{s}$
(strongly) in $\mathcal{M}_{b}(Q)$ and
\begin{equation}
||f_{n}||_{1,Q}+||g_{n}||_{p^{\prime},Q}+||h_{n}||_{X}+\mu_{n,s}(\Omega
)\leq2\mu(Q). \label{4bh2504}%
\end{equation}

\end{proposition}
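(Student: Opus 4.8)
The idea is to reduce Proposition \ref{P5} to
Proposition \ref{4bhatt} applied to the \emph{increments} of the sequence
$\left\{ \mu_n \right\}$, exactly as in Step 2 of the proof of Proposition
\ref{4bhatt}, but keeping track of the telescoping structure so that the
decompositions of the $\mu_n$ are obtained by \emph{partial sums} and hence
converge. First I would set $\nu_0 = \mu_0$ and $\nu_n = \mu_n - \mu_{n-1}$
for $n \geq 1$; since $\left\{ \mu_n \right\}$ is nondecreasing, each
$\nu_n \in \mathcal{M}_b^+(Q)$, and $\sum_{k=0}^n \nu_k = \mu_n$ increases to
$\mu$ with $\sum_{n\geq 0}\nu_n(Q) = \mu(Q)$. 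Each $\nu_n$ splits as
$\nu_n = \nu_{n,0} + \nu_{n,s}$ with $\nu_{n,0}\in\mathcal{M}_0^+(Q)$,
$\nu_{n,s}\in\mathcal{M}_s^+(Q)$ (decomposition \eqref{deo} restricted to the
positive cone; diffuseness and singularity are preserved under the absolutely
continuous restriction to the increment).

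Next I would apply Proposition \ref{4bhatt}(i) to each $\nu_{n,0}$, obtaining a
decomposition $\nu_{n,0} = (\tilde f_n, \tilde g_n, \tilde h_n)$ with
$\tilde f_n \in L^1(Q)$, $\tilde g_n \in (L^{p'}(Q))^N$, $\tilde h_n \in X$ and
the quantitative bound
\[
\|\tilde f_n\|_{1,Q} + \|\tilde g_n\|_{p',Q} + \|\tilde h_n\|_X
+ \nu_{n,s}(Q) \leq 2\,\nu_n(Q).
\]
Actually, to get the sharp constant $2$ in \eqref{4bh2504} rather than an
accumulated $2\sum\nu_n(Q)=2\mu(Q)$ — which is already what we want — no extra
care is needed: summing the bounds over $k=0,\dots,n$ gives
$\sum_{k=0}^n\bigl(\|\tilde f_k\|_{1,Q}+\dots+\nu_{k,s}(Q)\bigr)\le 2\mu(Q)$,
and the triangle inequality in $L^1(Q), (L^{p'}(Q))^N, X$ and in
$\mathcal{M}_b(Q)$ then bounds the norms of the partial sums by the same
constant. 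Then I set $f_n = \sum_{k=0}^n \tilde f_k$,
$g_n = \sum_{k=0}^n \tilde g_k$, $h_n = \sum_{k=0}^n \tilde h_k$ and
$\mu_{n,s} = \sum_{k=0}^n \nu_{k,s}$. By construction $\mu_n = f_n -
\operatorname{div} g_n + (h_n)_t + \mu_{n,s}$, the bound \eqref{4bh2504} holds,
and $\mu_{n,s}$ is a nondecreasing sequence of singular measures dominated by
$\mu$, hence it converges strongly in $\mathcal{M}_b(Q)$ to some
$\mu_s \in \mathcal{M}_s^+(Q)$ (the limit is singular because each term is, and
$\mathcal{M}_s^+(Q)$ is closed under monotone limits bounded in mass — the
supports stay in a countable union of sets of zero $c_p^Q$-capacity, which has
zero capacity).

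For convergence of $f_n, g_n, h_n$ I would invoke absolute summability: from
$\sum_{k\ge 0}\bigl(\|\tilde f_k\|_{1,Q} + \|\tilde g_k\|_{p',Q} +
\|\tilde h_k\|_X\bigr) \le 2\mu(Q) < \infty$, the series $\sum \tilde f_k$,
$\sum \tilde g_k$, $\sum \tilde h_k$ converge \emph{strongly} (absolutely) in
the Banach spaces $L^1(Q)$, $(L^{p'}(Q))^N$, $X$ respectively, to limits
$f, g, h$. Passing to the limit in $\mu_n = f_n - \operatorname{div} g_n +
(h_n)_t + \mu_{n,s}$ in the sense of distributions gives
$\mu = f - \operatorname{div} g + h_t + \mu_s$, and taking $n\to\infty$ in
\eqref{4bh2504} yields the analogous bound for the limit. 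The main obstacle — and
really the only delicate point — is making sure the increment measure $\nu_n$
genuinely inherits the splitting $\nu_{n,0}\in\mathcal M_0^+, \nu_{n,s}\in\mathcal
M_s^+$ from $\mu_n$ and $\mu_{n-1}$, i.e.\ that restricting/subtracting in the
monotone setting does not destroy the $\mathcal{M}_0$/$\mathcal{M}_s$
dichotomy; this follows because $\nu_n \le \mu$ forces $\nu_{n,s}\le \mu_s$ (by
uniqueness of the decomposition \eqref{deo} and monotonicity, the singular parts
are ordered) so $\nu_{n,0} = \nu_n - \nu_{n,s} \ge 0$ lies in
$\mathcal{M}_0^+(Q)$, and then Proposition \ref{4bhatt}(i) applies verbatim.
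Everything else is the triangle inequality and absolute convergence of series in
Banach spaces.
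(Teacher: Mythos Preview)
Your proposal is correct and follows essentially the same route as the paper's proof: form the nonnegative increments, apply Proposition~\ref{4bhatt}(i) to each increment's diffuse part to obtain $(\tilde f_k,\tilde g_k,\tilde h_k)$ with the bound $2\,\nu_k(Q)$, and take partial sums; absolute summability in the Banach spaces $L^1(Q)$, $(L^{p'}(Q))^N$, $X$ then gives the required strong convergence and the bound~\eqref{4bh2504}. The only cosmetic difference is the order of operations: the paper first observes that monotonicity of $\{\mu_n\}$ forces monotonicity of both $\{\mu_{n,0}\}$ and $\{\mu_{n,s}\}$ (hence strong convergence of each part separately), and then applies Proposition~\ref{4bhatt}(i) to the increments $\tilde\mu_{n,0}=\mu_{n,0}-\mu_{n-1,0}$ of the \emph{diffuse} parts only; you take increments of the full measures and then split. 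Since the decomposition~\eqref{deo} is additive on positive measures, the two orderings coincide, and your ``delicate point'' about the increments inheriting the $\mathcal{M}_0/\mathcal{M}_s$ splitting is exactly the monotonicity observation the paper makes up front. One small notational hazard: your $\nu_0=\mu_0$ clashes with the paper's use of $\mu_0$ for the diffuse part of $\mu$; in the write-up it would be safer to index the sequence from $n=1$ or to rename.
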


\begin{proof}
Since $\left\{  \mu_{n}\right\}  $ is nondecreasing, then $\left\{  \mu
_{n,0}\right\}  $, $\left\{  \mu_{n,s}\right\}  $ are nondecreasing too.
Clearly, $\left\Vert {\mu-{\mu_{n}}}\right\Vert _{\mathcal{M}_{b}%
(Q)}=\left\Vert {{\mu_{0}}-{\mu_{n,0}}}\right\Vert _{\mathcal{M}_{b}%
(Q)}+\left\Vert {{\mu_{s}}-{\mu_{n,s}}}\right\Vert _{\mathcal{M}_{b}(Q)}$.
Hence, $\left\{  \mu_{n,s}\right\}  $ converges to $\mu_{s}$ and $\left\{
\mu_{n,0}\right\}  $ converges to ${{\mu_{0}}}$ (strongly) in $\mathcal{M}%
_{b}(Q)$. Set ${\widetilde{\mu}_{0,0}}={\mu_{0,0},}$ and ${\widetilde{\mu
}_{n,0}}={\mu_{n,0}}-{\mu_{n-1,0}}$ for any $n\geq1$. By Proposition
\ref{4bhatt}, (i), we can find $\tilde{f}_{n}\in L^{1}(Q)$, $\tilde{g}_{n}%
\in(L^{p^{\prime}}(Q))^{N}$ and $\tilde{h}_{n}\in X$ such that $\tilde{\mu
}_{n,0}=(\tilde{f}_{n},\tilde{g}_{n},\tilde{h}_{n})$ and
\[
||\tilde{f}_{n}||_{1,Q}+||\tilde{g}_{n}||_{p^{\prime},Q}+||\tilde{h}_{n}%
||_{X}\leq2\tilde{\mu}_{n,0}(Q)
\]

\noindent Let $f_{n}=\sum\limits_{k=0}^{n}{\tilde{f}}_{k}$, $G_{n}%
=\sum\limits_{k=0}^{n}\tilde{g}_{k}$ and $h_{n}=\sum\limits_{k=0}^{n}\tilde
{h}_{k}$. Clearly, $\mu_{n,0}=(f_{n},g_{n},h_{n})$ and the convergence
properties hold with (\ref{4bh2504}), since
\[
||f_{n}||_{1,Q}+||g_{n}||_{p^{\prime},Q}+||h_{n}||_{X}\leq2\mu_{0}(Q).
\]

\end{proof}

\end{document}